\theoremstyle{plain}
\newtheorem{thm}{Theorem}[section]
\newtheorem{definition}{Definition}
\newtheorem{lemma}[thm]{Lemma}
\newtheorem{corollary}[thm]{Corollary}
\newtheorem{remark}[thm]{Remark}
\numberwithin{equation}{section}
\theoremstyle{remark}
\def\Xint#1{\mathchoice
  {\XXint\displaystyle\textstyle{#1}}%
  {\XXint\textstyle\scriptstyle{#1}}%
  {\XXint\scriptstyle\scriptscriptstyle{#1}}%
  {\XXint\scriptscriptstyle\scriptscriptstyle{#1}}%
  \!\int}
\def\XXint#1#2#3{{\setbox0=\hbox{$#1{#2#3}{\int}$}
  \vcenter{\hbox{$#2#3$}}\kern-.5\wd0}}
\def\dashint{\Xint-}
\title{\textbf{Quantitative Estimates in Homogenization of Parabolic Systems of Elasticity\\
in Lipschitz Cylinders}}
\author{
Qiang Xu,\thanks{Corresponding author.}
\thanks{Email: xuqiang@math.pku.edu.cn.}
\quad Shulin Zhou
\thanks{Email: szhou@math.pku.edu.cn.}\\
School of Mathematical Sciences, Peking University, \\
Beijing, 100871, PR China. }
\begin{document}
\allowdisplaybreaks
\maketitle
\begin{abstract}
This paper is devoted to establish an almost sharp error estimate $O(\varepsilon\ln(1/\varepsilon))$ in $L^2$-norm for homogenization of parabolic systems of elasticity with initial-Dirichlet conditions in a Lipschitz cylinder.
To achieve the goal, with the parabolic distance function being a weight, we first develop
some new weighted-type inequalities for the smoothing operator
at scale $\varepsilon$ in terms of t-anisotropic Sobolev spaces, and then reduce
all the problems to three kinds of estimate for the homogenized system, in which a weighted-type Caccioppoli's inequality on time-layer has been found. Throughout the paper,
we do not require any smoothness on coefficients compared to
the arguments investigated by C.Kenig, F. Lin and Z. Shen in \cite{SZW2}. This study
can be considered to be a further development of \cite{GZS} and \cite{QX2}.
\\
\textbf{Key words:} homogenization; parabolic systems; elasticity; error estimates;
Lipschitz cylinders.
\end{abstract}

\section{Introduction and main results}

In recent years, J. Geng and Z. Shen  in \cite{GZ,GZS} have made some significant developments in quantitative homogenization of parabolic systems
with time-dependent periodic coefficients, such as the uniform $W^{1,p}$, H\"older and interior
Lipschitz estimates, as well as a sharp $L^2$ convergence rate. Meanwhile, for parabolic systems only involving spatial-dependent periodic coefficients,
a sharp $L^2$ error estimate has also been obtained by Yu. Meshkova and T. Suslina in \cite{MYMTS}.
However, all the results in previous references were merely established for smooth cylinders.
In this paper, we manage to study the nonsmooth case.

We begin by stating the initial-boundary value problems that we will investigate and
sketching our main results.
Let $\Omega\subset\mathbb{R}^d$ with $d\geq 3$ be a bounded Lipschitz domain. For $T$ satisfying
$ T\in(0,\infty)$,
we define the parabolic cylinder as $\Omega_T= \Omega\times(0,T]$, and the
lateral boundary of $\Omega_T$ as $S_T = \partial\Omega\times(0,T]$,
while the parabolic boundary of $\Omega_T$ is written by
$\partial_p\Omega_T = \overline{\Omega}_T\setminus\Omega_T$.

For given data $F$, $h$ and $g$ specified in some proper spaces,
we consider the following parabolic system of elasticity with a initial-Dirichlet condition:
\begin{equation*}
(\text{DP}_\varepsilon)\left\{\begin{aligned}
\big(\partial_t + \mathcal{L}_\varepsilon\big)(u_\varepsilon)
& = F &~&\text{in}~ ~\Omega_T,\\
u_\varepsilon & = g &~&\text{on}~ S_T,\\
u_\varepsilon & = h &~&\text{on}~ \Omega\times\{t=0\},
\end{aligned}\right.
\end{equation*}
where $\varepsilon>0$ is a small parameter, and
\begin{equation*}
 \mathcal{L}_\varepsilon = -\text{div}(A(x/\varepsilon,t/\varepsilon^2))
=-\frac{\partial}{\partial x_i}\bigg[a_{ij}^{\alpha\beta}
\Big(\frac{x}{\varepsilon},\frac{t}{\varepsilon^2}\Big)\frac{\partial}{\partial x_j}\bigg].
\end{equation*}
(Einstein's convention for summation is used throughout.)

Let $A(y,\tau) = \big(a_{ij}^{\alpha\beta}(y,\tau)\big)$ with
$1\leq i,j,\alpha,\beta\leq d$ be real and satisfy two hypotheses.
\begin{enumerate}
  \item Elasticity: there holds
\begin{equation}\label{c:1}
  \begin{array}{c}
   a_{ij}^{\alpha\beta}(y,\tau)=  a_{ji}^{\beta\alpha}(y,\tau)
   =  a_{\alpha j}^{i\beta}(y,\tau) \\
 \mu_1 |\xi|^2 \leq a_{ij}^{\alpha\beta}(y,\tau)\xi_i^\alpha\xi_j^\beta\leq \mu_2 |\xi|^2
  \end{array}
\end{equation}
for any  $(y,\tau)\in\mathbb{R}^{d+1}$ and symmetric matrix $\xi=(\xi_i^\alpha)\in \mathbb{R}^{d\times d}$, where $\mu_1,\mu_2>0$.
  \item Periodicity:  for $(z,s)\in \mathbb{Z}^{d+1}$ and $(y,\tau)\in\mathbb{R}^{d+1}$,
\begin{equation}\label{c:2}
 A(y+z,\tau+s) = A(y,\tau).
\end{equation}
\end{enumerate}

We now state the first result.
\begin{thm}\label{thm:1.2}
Suppose that $A$ satisfies $\eqref{c:1}$ and $\eqref{c:2}$. Let
$F\in L^{2}(0,T;(H^{-1}(\Omega))^d)$, $h\in (L^2(\Omega))^d$ and
$g\in (H^{\frac{1}{2},\frac{1}{4}}(S_T))^d$.
Then
for a family of weak solutions
$u_\varepsilon\in L^2(0,T;(H^1(\Omega))^d)$ with
$\partial_t u_\varepsilon\in L^2(0,T;(H^{-1}(\Omega))^d)$ to $(\emph{DP}_\varepsilon)$,
there holds $u_\varepsilon \to u_0$ strongly in $(L^2(\Omega_T))^d$, as
$\varepsilon\to 0$,
where $u_0$ satisfies the homogenized system:
\begin{equation*}
(\emph{DP}_0)\left\{\begin{aligned}
\big(\partial_t + \mathcal{L}_0\big)(u_0)
& = F &~&\emph{in}~ ~\Omega_T,\\
u_0 & = g &~&\emph{on}~ S_T,\\
u_0 & = h &~&\emph{on}~ \Omega\times\{t=0\},
\end{aligned}\right.
\end{equation*}
in a weak sense,
and $\mathcal{L}_0 = \emph{div}(\widehat{A}\nabla)$ is an operator with the constant coefficient specified in
$\eqref{eq:2.7}$.
\end{thm}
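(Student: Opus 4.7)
The plan is to follow the classical Tartar/oscillating-test-function scheme adapted to parabolic elasticity systems, in three stages: uniform a priori estimates, extraction of weak/strong limits by parabolic compactness, and identification of the limit via correctors.

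First I would derive uniform (in $\varepsilon$) energy estimates. Using the ellipticity from $\eqref{c:1}$, test the weak formulation of $(\text{DP}_\varepsilon)$ against $u_\varepsilon - G$, where $G$ is a suitable lifting of $g$ into $L^2(0,T;(H^1(\Omega))^d)$ with $\partial_t G\in L^2(0,T;(H^{-1}(\Omega))^d)$ and trace $g$ on $S_T$. Combined with Korn's inequality (available thanks to the symmetry encoded in $\eqref{c:1}$) and Gr\"onwall, this yields
\[
\|u_\varepsilon\|_{L^\infty(0,T;L^2(\Omega))}+\|\nabla u_\varepsilon\|_{L^2(\Omega_T)}+\|\partial_t u_\varepsilon\|_{L^2(0,T;H^{-1}(\Omega))}\leq C,
\]
with $C$ independent of $\varepsilon$ and depending only on $\mu_1,\mu_2$, the data $F,h,g$, and $\Omega_T$.

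Second, by reflexivity I extract a subsequence $\varepsilon\to 0$ such that $u_\varepsilon\rightharpoonup u_0$ weakly in $L^2(0,T;(H^1(\Omega))^d)$ and $\partial_t u_\varepsilon\rightharpoonup \partial_t u_0$ weakly in $L^2(0,T;(H^{-1}(\Omega))^d)$; the Aubin--Lions lemma upgrades this to $u_\varepsilon\to u_0$ strongly in $(L^2(\Omega_T))^d$. Continuity of the spatial trace in $t$ transfers the lateral boundary value $g$ to $u_0$, while the initial condition $u_0(\cdot,0)=h$ follows from the standard continuous embedding $\{v\in L^2(0,T;H^1): \partial_t v\in L^2(0,T;H^{-1})\}\hookrightarrow C([0,T];L^2)$ applied to $u_\varepsilon$ uniformly.

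Third, to identify the PDE for $u_0$, I introduce the parabolic correctors $\chi_j^\beta(y,\tau)$, that is, the mean-zero $1$-periodic solutions on $\mathbb{T}^{d+1}$ of the cell problem
\[
\partial_\tau \chi_j^\beta - \partial_{y_i}\!\Big(a_{ik}^{\alpha\gamma}(y,\tau)\,\partial_{y_k}(\chi_j^\beta)^\gamma\Big)=\partial_{y_i}\big(a_{ij}^{\alpha\beta}(y,\tau)\big),
\]
whose solvability is standard under $\eqref{c:1}$--$\eqref{c:2}$, and which produces the effective tensor $\widehat{A}$ of $\eqref{eq:2.7}$. Testing the weak form of $(\text{DP}_\varepsilon)$ against a smooth compactly supported $\varphi$ plus the oscillatory correction $\varepsilon\,\chi_j^\beta(x/\varepsilon,t/\varepsilon^2)\,\psi_j^\beta(x,t)$, and integrating the time-derivative term by parts, the oscillations in $A(x/\varepsilon,t/\varepsilon^2)\nabla u_\varepsilon$ get cancelled by the corrector gradients. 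Passing $\varepsilon\to 0$ using the weak/strong convergences from stage two and the mean-value property of periodic functions yields the homogenized flux $\widehat{A}\nabla u_0$, hence $u_0$ solves $(\text{DP}_0)$ weakly. Uniqueness for $(\text{DP}_0)$ then promotes subsequential convergence to convergence of the full family.

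The main obstacle is exactly this third step: the bilinear term $\int A(x/\varepsilon,t/\varepsilon^2)\nabla u_\varepsilon\cdot\nabla(\text{test})$ features two only-weakly-convergent factors, and parabolic homogenization requires careful handling of the $\partial_\tau$ piece of the corrector when integrating by parts in time, since this term has no elliptic counterpart and must be absorbed using the adjoint corrector structure or a suitable dual cell problem before passing to the limit.
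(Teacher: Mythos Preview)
Your proposal is correct and follows essentially the same route as the paper: uniform energy estimates via a lifting of $g$ and Korn's inequality (the paper's Theorem~\ref{thm:2.1}), weak limits plus Aubin--Lions for strong $L^2(\Omega_T)$ convergence, Tartar's oscillating test function method with the parabolic correctors to identify the flux limit $\widehat{A}\nabla u_0$, and finally verification of the lateral and initial data. The only cosmetic difference is that the paper outsources the flux-identification step to \cite[Theorem~3.6]{GZS1} rather than writing it out, and it checks $u_0=g$ on $S_T$ and $u_0(\cdot,0)=h$ by an explicit trace-interpolation inequality and a test-function-at-$t=0$ argument, respectively, where you invoke continuity of the trace and of the $C([0,T];L^2)$ embedding.
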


Here $F\in L^{2}(0,T;(H^{-1}(\Omega))^d)$ means its component
$F^\alpha\in L^{2}(0,T;H^{-1}(\Omega))$ with $\alpha = 1,\cdots,d$, and its
definition may be found in \cite[pp.374]{LCE}.
By the same convention, the notation $(L^2(\Omega))^d$, $(H^{\frac{1}{2},\frac{1}{4}}(S_T))^d$,
and $L^2(0,T;(H^1(\Omega))^d)$ represent
the corresponding product spaces, in which
the space $H^{\frac{1}{2},\frac{1}{4}}(S_T)$
is a Hilbert space collecting functions
with one half of a spatial derivative and one quarter of a time derivative in $L^2(S_T)$
(see \cite[pp.502]{MC}), and
the definition of $L^2(0,T;H^1(\Omega))$ is stated in \cite[pp.301]{LCE}.
We remark that there is little effort made to distinguish vector-valued functions or function spaces from their real-valued counterparts in the paper.

Up to the first Korn inequality stated in \cite[pp.371]{VSO},
the proof of Theorem $\ref{thm:1.2}$ is quite similar to that given
for \cite[Theorem 3.1]{GZ} in the case of $g=0$, and we just outline it in Section $\ref{section:2}$. Also, on account of \cite[Remark 3.2]{GZ}
the homogenized system $(\text{DP}_0)$ is still a parabolic system with the constant coefficient  satisfying
the same elasticity condition $\eqref{c:1}$.
The above result is just a qualitative investigation, and this kind research may trace back to
1970s, which was summarized in the monograph \cite[pp.140]{ABJLGP}. In the paper,
we will seek for a sharply quantitative estimate on rate of convergence between $u_\varepsilon$ and
$u_0$ in $L^2(\Omega_T)$, and
the following theorem gives the main result.

\begin{thm}\label{thm:1.1}
Suppose that $A$ satisfies the conditions $\eqref{c:1}$ and $\eqref{c:2}$.
Given $F\in (L^2(\Omega_T))^d$, $g\in ({_0H^{1,1/2}(S_T)})^d$ and $h\in (H_0^1(\Omega))^d$, let
$u_\varepsilon$ and $u_0$ be the weak solutions of the initial-Dirichlet problems
$(\emph{DP}_\varepsilon)$ and $(\emph{DP}_0)$, respectively. Then we have
\begin{equation}\label{pri:1.1}
\|u_\varepsilon - u_0\|_{L^2(\Omega_T)}
\leq C\varepsilon\ln(1/\varepsilon)
\Big\{\|F\|_{L^2(\Omega_T)}
+\|g\|_{H^{1,1/2}(S_T)} + \|h\|_{H^1(\Omega)}\Big\},
\end{equation}
where $C$ depends only on $\mu_1, \mu_2, d, T$ and $\Omega$.
\end{thm}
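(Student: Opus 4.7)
The plan is to prove \eqref{pri:1.1} by combining a first-order two-scale expansion with a duality argument, using weights in the parabolic distance $\delta(x,t):=\mathrm{dist}_p((x,t),\partial_p\Omega_T)$ to absorb the lack of boundary regularity. Because $\Omega$ is only Lipschitz, the naive ansatz $u_0+\varepsilon\chi^\beta(x/\varepsilon,t/\varepsilon^2)\partial_\beta u_0$ is not admissible, so I would work instead with the smoothed approximation
\begin{equation*}
w_\varepsilon := u_\varepsilon - u_0 - \varepsilon\,\chi^\beta(x/\varepsilon,t/\varepsilon^2)\, S_\varepsilon\!\big(\eta_\varepsilon\,\partial_\beta u_0\big),
\end{equation*}
where $\chi^\beta$ is the elasticity corrector for $\mathcal{L}_\varepsilon$, $S_\varepsilon$ is the parabolic smoothing operator at scale $\varepsilon$, and $\eta_\varepsilon$ is a cut-off equal to $1$ on $\{\delta>4\varepsilon\}$ and supported in $\{\delta>2\varepsilon\}$.

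Next I would compute $(\partial_t+\mathcal{L}_\varepsilon)w_\varepsilon$. Exploiting the cell-problem identity together with a dual (flux) corrector $B$ antisymmetric in its matrix indices, the right-hand side decomposes into three types of source terms: an interior oscillatory divergence of the form $\mathrm{div}\!\big(\varepsilon B(x/\varepsilon,t/\varepsilon^2)\nabla S_\varepsilon(\eta_\varepsilon\nabla u_0)\big)$, a boundary-layer contribution supported in $\{\delta<4\varepsilon\}$ coming from $\nabla\eta_\varepsilon$, and a lower-order remainder involving $\varepsilon\partial_t u_0$ and $\varepsilon\nabla^2 u_0$. Here the weighted-type inequalities for $S_\varepsilon$ in $t$-anisotropic Sobolev spaces, announced in the abstract, play the role that the Meyers reverse-H\"older estimate would play under smoothness assumptions on $A$.

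To promote the resulting $H^{-1}$-type bound on $w_\varepsilon$ into the desired $L^2$-rate, I would use duality: for an arbitrary $\Phi\in (L^2(\Omega_T))^d$, solve the backward adjoint homogenized problem with source $\Phi$ to produce $v_0$, and evaluate $\iint_{\Omega_T}(u_\varepsilon-u_0)\Phi$ by testing the error equation against $v_0$ (more precisely, against its own smoothed two-scale approximation so that both sides are treated symmetrically). This reduces the whole question to the three kinds of estimate for the homogenized system announced in the introduction: a weighted second-derivative bound $\iint\delta|\nabla^2 u_0|^2\leq C\|\mathrm{data}\|^2$, the analogous bound for $v_0$, and the new weighted-type Caccioppoli inequality on time-layers, which controls $\iint_{\Omega\times(t_0,t_0+\varepsilon^2)}|\nabla u_0|^2$ and is needed because the usual energy identity does not give enough on thin initial or terminal slabs.

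The main obstacle, and the origin of the extra logarithm, is the layer contribution $\iint_{\{\delta<c\varepsilon\}}|\nabla u_0|^2$ together with its analogue for $v_0$. Rewriting these as $\iint\delta^{-1}\mathbf{1}_{\{\delta<c\varepsilon\}}\cdot\delta|\nabla u_0|^2$ and combining the weighted second-derivative bound with the layer-cake identity $\int_\varepsilon^{c_0}s^{-1}\,ds=\ln(c_0/\varepsilon)$ produces exactly the factor $\ln(1/\varepsilon)$, while all remaining terms are genuinely $O(\varepsilon)$. The hardest technical point will therefore be proving the weighted smoothing estimates and the weighted Caccioppoli bound on time-layers while retaining no regularity assumption whatsoever on the coefficients $A$; once these are in place, assembling the pieces above and taking the supremum over $\Phi$ with $\|\Phi\|_{L^2(\Omega_T)}=1$ yields \eqref{pri:1.1}.
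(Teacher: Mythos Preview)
Your overall strategy---smoothed first-order expansion, computation of the error source, duality against the adjoint homogenized problem, and weights in $\delta$ to compensate for the lack of boundary regularity---is indeed the paper's route. However, two concrete features of the parabolic setting with \emph{time-dependent} coefficients are missing from your ansatz, and the paper singles out both as indispensable.

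First, your $w_\varepsilon$ lacks the second-order correction
\[
\varepsilon^{2}\,E_{l(d+1)j}(x/\varepsilon,t/\varepsilon^{2})\,\nabla_l\varphi_j,
\]
built from the dual corrector $E$ of Lemma~\ref{lemma:2.3}. In the parabolic case the flux identity contains the extra component $b_{(d+1)j}=\chi_j$, so the calculation of $(\partial_t+\mathcal L_\varepsilon)w_\varepsilon$ in Lemma~\ref{lemma:3.4} produces a term $\varepsilon^{2}\partial_t\bigl[E_{(d+1)ij}\,\partial_i\varphi_j\bigr]$ which is \emph{not} a spatial divergence; it is cancelled only by the time derivative of the $\varepsilon^{2}E$-correction you omitted. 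With the first-order ansatz alone the right-hand side does not take the form $\operatorname{div}\tilde f$, and the energy/duality machinery cannot start.

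Second, the paper uses $\varphi=S_\varepsilon K_\varepsilon(\Psi\nabla u_0)$, not $S_\varepsilon(\eta_\varepsilon\nabla u_0)$, and explicitly remarks (see the Remark following Theorem~\ref{thm:1.1} and the one after Corollary~\ref{cor:2.1}) that the extra spatial smoothing $K_\varepsilon$ cannot be dropped. The reason is that the source $\tilde f$ in \eqref{eq:3.3} contains $\varepsilon^{2}\partial_t\varphi$; with $\varphi=S_\varepsilon(\Psi\nabla u_0)$ one is forced to bound $\|\partial_t(\Psi\nabla u_0)\|_{L^2}$, i.e.\ a mixed third-order quantity $\nabla\partial_t u_0$ that is unavailable in a Lipschitz cylinder. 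Inserting $K_\varepsilon$ allows the identity $\partial_t K_\varepsilon(\Psi\nabla u_0)=K_\varepsilon(\partial_t\Psi\,\nabla u_0)+\nabla K_\varepsilon(\Psi\,\partial_t u_0)-K_\varepsilon(\nabla\Psi\,\partial_t u_0)$ and the smoothing bound $\|\nabla K_\varepsilon g\|\le C\varepsilon^{-1}\|g\|$ to convert everything to $\nabla^{2}u_0$ and $\partial_t u_0$ only (cf.\ \eqref{f:3.28}, \eqref{f:3.29}).

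Finally, your account of where the logarithm originates is inverted. The boundary-layer integral $\iint_{\{\delta<c\varepsilon\}}|\nabla u_0|^{2}$ is in fact $O(\varepsilon)$ (Lemma~\ref{lemma:3.2}) and, after weighting, $\|\nabla u_0\|_{L^2(\boxbox_{2\varepsilon};\delta)}=O(\varepsilon)$ (estimate \eqref{pri:4.2}); it carries no log. The $\ln(1/\varepsilon)$ enters through the \emph{co-layer} weighted quantities $\|\nabla^{2}u_0\|_{L^{2}(\Sigma^T_{4\varepsilon^2,2\varepsilon};\delta)}$ and $\|\nabla u_0\|_{L^{2}(\Sigma^T_{4\varepsilon^2,2\varepsilon};\delta^{-1})}$, each of size $[\log(1/\varepsilon)]^{1/2}$ (Lemma~\ref{lemma:4.2}); the integral producing the log is the co-layer integral $\int_{2\varepsilon}^{c_0}r^{-1}\,dr$ arising from the nontangential-maximal-function bound \eqref{f:3.13}, not a layer-cake over $\{\delta<c\varepsilon\}$. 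In the duality estimate \eqref{pri:4.1} these two square-root logs multiply to give the final $\varepsilon\ln(1/\varepsilon)$.
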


The symbol ${_0H^{1,1/2}(S_T)}$ denotes a Sobolev space of functions
with one spatial derivative and half of a time derivative in $L^2(S_T)$,
requiring its element to vanish on $\partial\Omega\times\{t=0\}$
(see \cite[pp.353]{RB}). This may be viewed as a
compatibility condition between the lateral data $g$ and the initial data $h$.

The convergence rate estimate $\eqref{pri:1.1}$ is almost sharp,
which may be interpreted as an operator error estimate sometimes.
Compared to the recent result
obtained in \cite[Theorem 1.1]{GZS}, the estimate $\eqref{pri:1.1}$ owns two
conspicuous advantages. One is that the result is established for a
Lipschitz cylinder, the other is that
the estimate is fully based upon the given data, especially permitting
a lower regularity assumption on the lateral data $g$.
On the other hand, the estimate $\eqref{pri:1.1}$ is quite similar to that developed
for elliptic systems with Dirichlet or Neumann boundary conditions in \cite[Theorems 1.1,1.2]{QX2},
which seems to be reasonable if we think of the elliptic system as the stable case of the parabolic one. However, handling parabolic systems proved to be much complicated,
and we have to establish some new weighted-type estimates with a parabolic distance function being
a weight, such as Lemmas $\ref{lemma:2.5},\ref{lemma:2.1}$ and $\ref{lemma:2.6}$.
Meanwhile, some new techniques designed for the so-called time-layer type estimates have also been developed in Lemmas $\ref{lemma:3.2}$ and $\ref{lemma:4.1}$.
Such the estimates similar to $\eqref{pri:1.1}$ have been intensively studied during the past ten years for elliptic operators, parabolic equations and Stokes systems in periodic homogenization theory,
and without attempting to be exhaustive we refer the reader to
\cite{SACS,SZ,MAFHL,ABJLGP,BMSHSTA,GZS,GZS1,GG1,GG2,G,SZW2,MYMTS,SZW12,SZW20,TS2,TS,QX2,QX3,QXS1,ZVVPSE} and references therein for more results.
We end this paragraph by mention that
the source of the main ideas directly come from the references \cite{GZS,QX2}, originally from C. Kenig, F. Lin, Z. Shen and T. Suslina in \cite{SZW2,SZW12,TS}.

So, it is instructive to sketch the main procedures before giving the detailed proof.
Inspired from \cite[Theorem 2.2]{GZS},
we construct the approximating of $u_\varepsilon$ as follows
\begin{equation}
w_\varepsilon = u_\varepsilon - u_0
-\varepsilon\chi_j(x/\varepsilon,t/\varepsilon^2)S_\varepsilon
K_\varepsilon(\Psi_{[4\varepsilon^2,2\varepsilon]}\nabla_j u_0)
-\varepsilon^2 E_{l(d+1)j}(x/\varepsilon,t/\varepsilon^2)
\nabla_l S_\varepsilon
K_\varepsilon(\Psi_{[4\varepsilon^2,2\varepsilon]}\nabla_j u_0),
\end{equation}
where $\chi_j$ and $E_{l(d+1)j}$ with $1\leq j,l\leq d$, are known as correctors and dual correctors in Subsection $\ref{subsection2.2}$, and they had been well studied in \cite{GZ,GZS}.
Here $S_\varepsilon$ and $K_\varepsilon$ is the smoothing operators given
in Definition $\ref{def:1}$, as successors of the so-called Steklov smoothing operator
originally applied to the homogenization problems by V.V Zhikov and S.E. Pastukhova in \cite{ZVVPSE}.
The notation $\Psi_{[4\varepsilon^2,2\varepsilon]}$ is a cut-off function whose description will be given later. Then, we can find an equation that $w_\varepsilon$ satisfies
(see Lemma $\ref{lemma:3.4}$), and this is the starting point of the proof of Theorem $\ref{thm:1.1}$.
For ease of statement, it is fine to assume
$\|F\|_{L^2(\Omega_T)} + \|g\|_{H^{1,1/2}(S_T)}+\|h\|_{H^1(\Omega)} = 1$
by the linearity of $(\text{DP}_\varepsilon)$ and $(\text{DP}_0)$. Roughly speaking, the proof will be reduced to two
steps. The first one is based upon the energy inequality, which shows
\begin{equation}\label{pri:1.3}
\big\|\nabla w_\varepsilon\big\|_{L^2(\Omega_T)} = O(\varepsilon^{1/2}).
\end{equation}
The second one relies on duality methods, by which we may establish
\begin{equation}\label{pri:1.4}
\big\|w_\varepsilon\big\|_{L^2(\Omega_T)} = O(\varepsilon\ln(1/\varepsilon)).
\end{equation}

At a glimpse, the methods look similar to the aforementioned ones as in \cite{GZS}.
However, the calculations related to nonsmooth cylinders turn to be
much involved. So, some related tricks are necessary to be explained. Before proceeding further, it is better to introduce some geometric notation to simplify the later statements,
and they will be shown in Figures $\ref{pic:1.1}$ and $\ref{pic:1.2}$ to make them be apprehended at a glance.
\begin{itemize}
  \item $S_r = \big\{x\in\Omega:\text{dist}(x,\partial\Omega)= r\big\}$
  denotes the level set of $\Omega$.
  \item $r_0$ is the diameter of $\Omega$, and $r_{00}=\max\{r>0:B(x,r)\subset\Omega,
  \forall x\in S_r\}$ denotes the the internal diameter, where $B(x,r)$ is an open ball
   in $\mathbb{R}^d$ with center $x$ and radius $r>0$, and we call $c_0 = r_{00}/10$ the layer constant.
  \item $P(X,r) = \big\{Y\in\mathbb{R}^{d+1}: \text{d}(X,Y)<r\big\}$ is known as a parabolic cube with the center $X$ and radius $r>0$, where the capital letters $X=(x,t)$ and $Y=(y,s)$
      are used to represent some points in the parabolic cylinder $\Omega_T$, and
      $\text{d}(X,Y)=|x-y|+|t-s|^{1/2}$ is the so-called parabolic distance.
\includegraphics[width=6in]{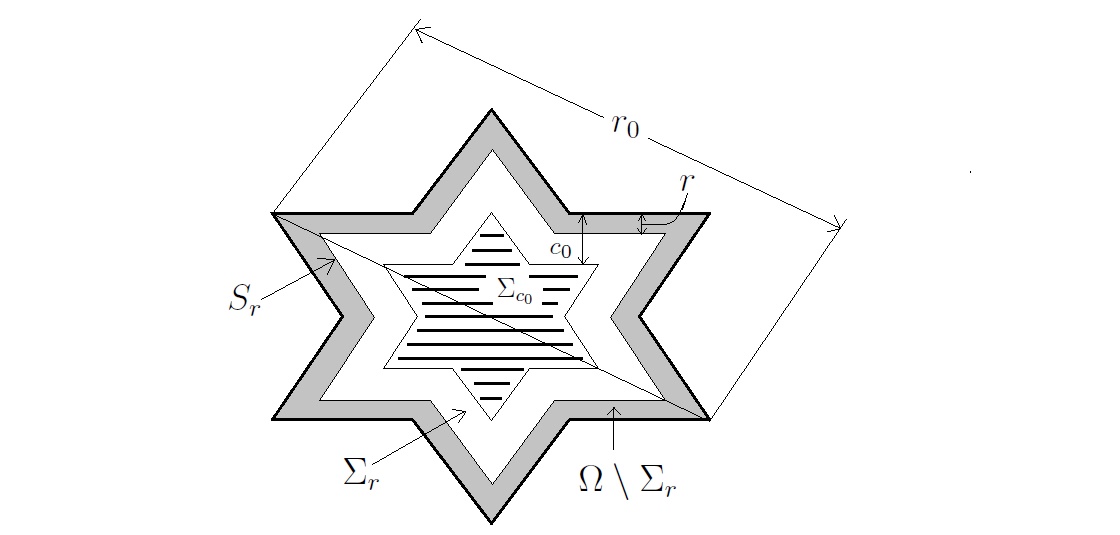}
\makeatletter\def\@captype{figure}\makeatother
\caption{aerial view of $\Omega_T$}\label{pic:1.1}
\vspace{0.5cm}
\includegraphics[width=6in]{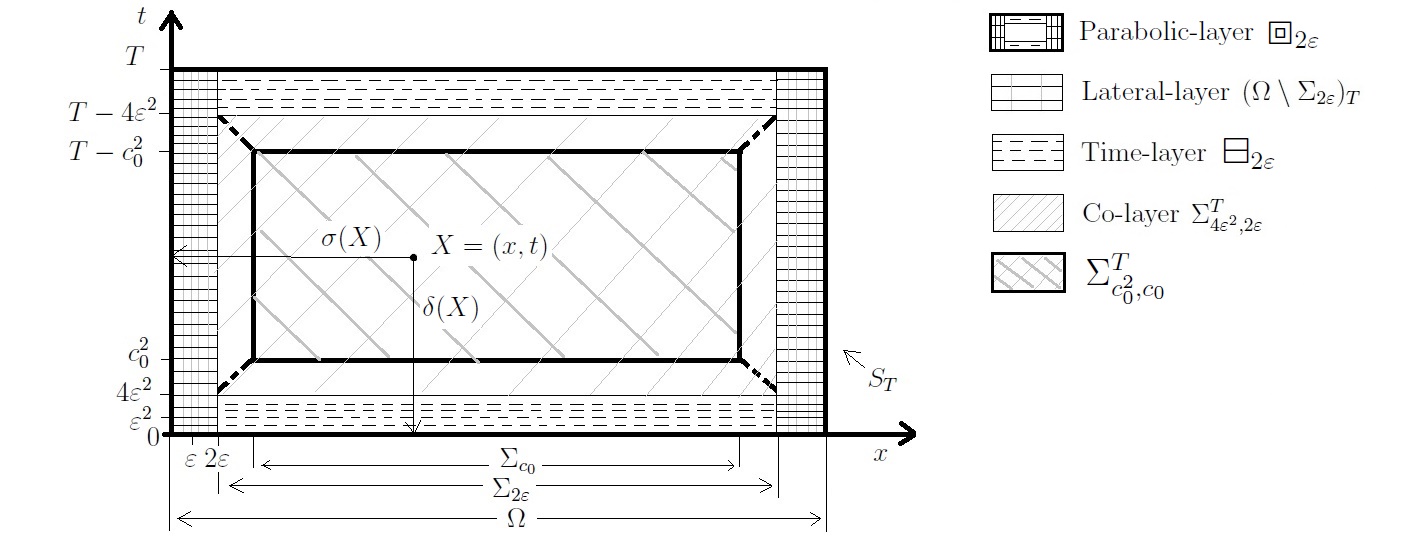}
\makeatletter\def\@captype{figure}\makeatother
\caption{sectional view of $\Omega_T$}\label{pic:1.2}
  \item $\Sigma_{r^2,r}^T = \Sigma_{r}\times [r^2,T-r^2]$,
      where $\Sigma_{r}
      = \big\{x\in\Omega:\text{dist}(x,\partial\Omega) \geq r\big\}$,
      which is interpreted as being the parabolic co-layer of $\Omega_T$, and
      $\kappa\Sigma_{r^2,r}^T = \Sigma_{\kappa r^2,\kappa r}^T$ is regarded as an expansion
      of $\Sigma_{r^2,r}^T$ with the factor $\kappa\in(0,1)$,
      or as a shrink with $\kappa > 1$.
  \item $\boxbox_{\kappa r} = \Omega_T\setminus \kappa\Sigma_{r^2,r}^T$ is
  known as a parabolic-layer of $\Omega_T$, which is composed of two parts:
  \begin{description}
    \item[lateral-layer] $(\Omega\setminus\Sigma_{\kappa r})_T
  = \Omega\setminus\Sigma_{\kappa r}\times (0,T]$,
    \item[time-layer] $\boxminus_{\kappa r}
    =  \boxbox_{\kappa r}\setminus(\Omega\setminus\Sigma_{\kappa r})_T
    = \Sigma_{\kappa r}\times(0,\kappa r^2]\cup
    \Sigma_{\kappa r}\times(T-\kappa r^2,T]$.
  \end{description}
 \item For any
$X=(x,t)\in \Omega_T$, the distance between $X$ and $\partial\Omega_T$ is denoted by
\begin{equation}\label{eq:2.4}
\delta(x,t)=\delta(X) = \text{d}(X,\partial\Omega_T) = \min\Big\{\text{dist}(x,\partial\Omega),t^{\frac{1}{2}},
(T-t)^{\frac{1}{2}}\Big\}.
\end{equation}
and the one between $X$ and $S_T$ is written by
\begin{equation}\label{eq:2.5}
 \sigma(x,t)=\sigma(X) = \text{d}(X,S_T) = \text{dist}(x,\partial\Omega).
\end{equation}
\item Let $\Psi_{[r^2,r]}\in C_0^{2,1}(\Omega_T)$ be a cut-off function such that
\begin{equation}
\Psi_{[r^2,r]} = \left\{
                 \begin{array}{ll}
                   1, & \text{in}~\Sigma_{2r^2,2r}^T, \\
                   0, & \text{on}~\Omega_T\setminus\Sigma_{r^2,r}^T,
                 \end{array}
               \right.
\quad\text{and}\quad
\left\{
  \begin{array}{l}
  |\nabla\Psi_{[r^2,r]}|\leq C/r, \\
  |\partial_t \Psi_{[r^2,r]}|
+ |\nabla^2 \Psi_{[r^2,r]}|\leq C/r^2 ,
  \end{array}
\right.
\end{equation}
where $C_0^{2,1}(\Omega_T)$ is the set of all continuous functions with compacted support in
$\Omega_T$ having continuous derivatives $\nabla_i u, \nabla^2_{ij}u$ and $\partial_t u$ (see Subsection $\ref{subsection:2.0}$).
\end{itemize}

In fact, to estimate $\eqref{pri:1.3}$, it suffices to show
\begin{equation}\label{pri:1.5}
 \big\|\nabla u_0\big\|_{L^2((\Omega\setminus\Sigma_{4\varepsilon})_T)}
+  \sup_{\varepsilon^2<t<T}\Big(
\int_{t-\varepsilon^2}^t\int_{\Sigma_{2\varepsilon}}|\nabla u_0|^2 dxdt\Big)^{1/2}
= O(\varepsilon^{1/2}),
\end{equation}
and
\begin{equation}\label{pri:1.6}
\max\Big\{\|\nabla^2 u_0\|_{L^2(\Sigma_{4\varepsilon^2,2\varepsilon}^T)},
~\|\partial_t u_0\|_{L^2(\Sigma_{4\varepsilon^2,2\varepsilon}^T)}\Big\}
= O(\varepsilon^{-1/2}).
\end{equation}

According to the region of the above integrals (see Figure $\ref{pic:1.2}$),
the estimate $\eqref{pri:1.5}$ may be regarded as ``a lateral-layer type estimate $+$ a time-layer type one'', while we think of $\eqref{pri:1.6}$ as a co-layer type estimate, where co-layer means
the complementary layer for short.
All these estimates can not be directly derived,
since $g\not=0$ on $S_T$ and there is no hope of transferring it to the initial or
source term due to the less regularity assumption on $\Omega_T$.
However, owing to the linearity of $(\text{DP}_0)$,
it may be divided into a homogeneous part with nonzero lateral data and a nonhomogeneous part with zero one. By an extension technique,
the solution of related nonhomogeneous system (denoted by $v$ for example) holds some regularity estimates for $\partial_t v$ and $\nabla^2 v$ in a larger smooth cylinder.
Note that the co-area formula is still valid for space variables, by which the
lateral-layer and time-layer type estimates for $v$ could be reduced to bound
the following quantity
\begin{equation*}
  \int_{S_r} |\nabla v| dS_r
\end{equation*}
uniformly for $r\in[0,c_0]$, and this will be done through the trace theorem.
Actually, the width of the layer is merely $\varepsilon$ or $\varepsilon^2$, which
is one of places where a half order of convergence rates is born.
In addition, one
may even derive a better co-layer estimate for $v$, which is
\begin{equation*}
\max\Big\{\|\nabla^2 v\|_{L^2(\Sigma_{4\varepsilon^2,2\varepsilon}^T)},
~\|\partial_t v\|_{L^2(\Sigma_{4\varepsilon^2,2\varepsilon}^T)}\Big\}
= O(1).
\end{equation*}
We mention that the above approaches have already been developed by Z. Shen in \cite{SZW12}
regarding to an elliptic system of elasticity.

The hard part is the homogeneous one with nonzero lateral data, whose solution
is represented by $\bar{w}$ for the occasion.
The existence of $\bar{w}$ is a long but interesting story which had been brilliantly accomplished by Z. Shen in \cite{SZW22}, and it guarantees that the previous detaching works legally.
Also, we strongly recommend R. Brown's work \cite{RB} for this field.
Compared to the elliptic cases, the main difficulty will soon emerge in the time-layer type estimate
\begin{equation*}
\sup_{\varepsilon^2<t<T}\int_{t-\varepsilon^2}^t\int_{\Sigma_{2\varepsilon}}|\nabla u_0|^2 dxdt
= O(\varepsilon),
\end{equation*}
which may promptly be put down to the following estimates
\begin{equation*}
\int_{t-\varepsilon^2}^t\int_{\Sigma_{2\varepsilon}\setminus\Sigma_{4\varepsilon}}|u_0|^2 dxds
= O(\varepsilon^3),
\quad
\int_{t-\varepsilon^2}^t\int_{\Sigma_{2\varepsilon}}
|u_0|^2 dxds
= O(\varepsilon^2)
\quad
\text{and}\quad
\int_{t-\varepsilon^2}^t\int_{\Sigma_{2\varepsilon}}
|\partial_t u_0|^2 dxds
= O(1),
\end{equation*}
by using a Caccioppoli's inequality in Lemma $\ref{lemma:3.3}$.
The crucial ingredient is that in virtue of nontangential maximal functions,
we can control the behavior of $\bar{w}$ near $S_T$ in a time layer
(see Figure $\ref{pic:1.3}$). Here we define the maximal function of $\bar{w}$ as
\begin{equation*}
 (\bar{w})^*(x,t) = \sup_{(y,s)\in \Upsilon(x,t)}\big|\bar{w}(y,s)\big|
\end{equation*}
where $\Upsilon(x,t)$ is the parabolic nontangential approach region defined for
$(x,t)\in S_T$ by
\begin{equation*}
 \Upsilon(x,t) = \big\{(y,s):|y-x|+|s-t|^{1/2}<(1+N)\text{dist}(y,\partial\Omega)\big\}
\cap\Omega_T.
\end{equation*}
The parameter $N$ is an arbitrary positive number which will be fixed throughout this paper.
\begin{center}
\includegraphics[width=5.5in]{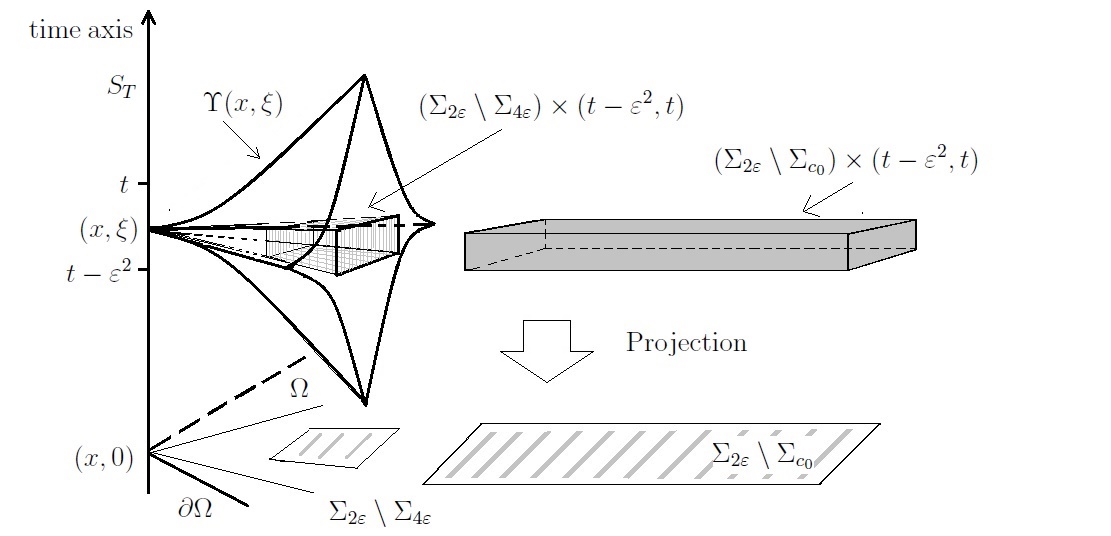}
\makeatletter\def\@captype{figure}\makeatother
\caption{parabolic nontangential approach region and time-layer regions}\label{pic:1.3}
\end{center}

Precisely, a subtle fact observed in Figure $\ref{pic:1.3}$ will be frequently used in the
later sections, which is
\begin{equation}\label{f:1.1}
\int_{t-\varepsilon^2}^t\int_{\Sigma_{2\varepsilon}\setminus\Sigma_{4\varepsilon}}|\bar{w}|^2 dxds
\leq C\varepsilon^3\int_{\partial\Omega}|(\bar{w})^*(\cdot,\xi)|^2dx
\end{equation}
and
\begin{equation}\label{f:1.2}
\int_{t-\varepsilon^2}^t\int_{\Sigma_{2\varepsilon}\setminus\Sigma_{c_0}}|\nabla\bar{w}|^2 dxds
\leq C\varepsilon^2\int_{\partial\Omega}|(\nabla\bar{w})^*(\cdot,\xi)|^2dx
\end{equation}
where $\xi\in(t-\varepsilon^2,t)$, and $C$ is independent of $\xi$.
Then, integrating both sides of the above inequalities with respect to $\xi$ from $0$ to $T$,
the left-hand sides of $\eqref{f:1.1}$ and $\eqref{f:1.2}$ will be controlled by the quantities
$\|(\bar{w})^*\|_{L^2(S_T)}$ and $\|(\nabla\bar{w})^*\|_{L^2(S_T)}$, which will further be
determined by the given lateral data $g$ (see \cite[Theorem 4.2.1]{SZW22}). Here
we always divide $\Sigma_{r}$ into $\Sigma_{r}\setminus\Sigma_{c_0}$ and $\Sigma_{c_0}$, where
the region $\Sigma_{c_0}$ will be good part for related calculations in general.

Next, we will show some important observations on the co-layer type estimates $\eqref{pri:1.6}$. Again, we only focus ourselves on the estimate of
\begin{equation}\label{f:1.3}
\bigg(\int_{4\varepsilon^2}^{T-4\varepsilon^2}
\int_{\Sigma_{2\varepsilon}}|\nabla^2 \bar{w}|^2 dxdt\bigg)^{1/2} = O(\varepsilon^{-1/2}),
\end{equation}
whereas it is not hard to verify
$\|\nabla^2 v\|_{L^2(\Omega_T)}= O(1)$. To do so, we consider the following pointwise estimate
\begin{equation*}
\big|\nabla^2 \bar{w}(X)\big|^2 \leq
\frac{C}{[\sigma(X)]^2}\dashint_{P(X,\sigma(X)/4)\cap\Omega_T}|\nabla \bar{w}(Y)|^2 dY
\end{equation*}
for any $X=(x,t)\in (\Sigma_{2\varepsilon}\setminus\Sigma_{c_0})\times(4\varepsilon^2,T-4\varepsilon^2)$,
which may be found in \cite[pp.1148-1149]{SchW}. Since there holds the following relationship
between a parabolic ball and a parabolic nontangential approach region:
\begin{equation*}
 P(X,\sigma(X)/4)\cap\Omega_T \subsetneqq  \Upsilon(x^\prime,t),
\end{equation*}
where $x^\prime\in\partial\Omega$ is the point such that
$|x^\prime-x|=\sigma(X)$. Hence we have the following estimate
\begin{equation}\label{f:1.4}
\int_{4\varepsilon^2}^{T-4\varepsilon}
\int_{\Sigma_{2\varepsilon}\setminus\Sigma{c_0}}
\frac{1}{[\sigma(X)]^2}
\dashint_{P(Y,\delta(P)/4)}|\nabla\bar{w}|^2 dY dX
\leq C\int_0^T\int_{\partial\Omega}|(\nabla\bar{w})^*(\cdot,t)|^2 dSdt
\int_{2\varepsilon}^{c_0}\frac{dr}{r^2}.
\end{equation}
This together with the parabolic nontangential maximal estimate
\cite[Theorem 4.2.1]{SZW22} leads to the desired estimate $\eqref{f:1.3}$.
Consequently, the main procedures in the proof of $\eqref{pri:1.3}$ have been introduced to the reader. We must mention that such the aforemention techniques have already been in
Z. Shen's recent work \cite{SZW12} for elliptic cases.

Innovations originally come from managing to improve the estimate $\eqref{f:1.4}$.
It is natural to think of the distance function $\delta$ as a weight to
increase some integrability in the right-hand side of $\eqref{f:1.4}$, as a result of the fact that
$\delta/\sigma\leq 1$. Although this weight may lead to some better estimates, such as
\begin{equation*}
\int_{4\varepsilon^2}^{T-4\varepsilon^2}
\int_{\Sigma_{2\varepsilon}}|\nabla^2 \bar{w}|^2 \delta(x,t)dxdt
 = O(\ln(c_0/\varepsilon))
\quad\text{and}\quad
\int_{\boxbox_{2\varepsilon}}|\nabla u_0|^2 \delta(x,t)dxdt
= O(\varepsilon),
\end{equation*}
it also arises other intractable problems. One of them is to bound the following quantity
\begin{equation*}
\int_{4\varepsilon^2}^{T-4\varepsilon^2}\int_{\Sigma_{2\varepsilon}}
|\nabla u_0|^2 [\delta(x,t)]^{-1}dxdt
\end{equation*}
by $O(\log_2(1/\varepsilon))$,
which urges us to find a weighted Caccioppoli's inequality in a time-layer region
(see Lemma $\ref{lemma:4.1}$). Beyond this, we require that
the weight functions $\delta^{\pm1}$ can pass through the smoothing operators
$S_\varepsilon$ and $K_\varepsilon$ freely,
which has been summarized in Lemmas $\ref{lemma:2.1}$, $\ref{lemma:2.6}$ and
$\ref{lemma:2.5}$. As far as the authors have known, they are new established in this paper.
Therefore, in technical point of view, the order of $\varepsilon$ in the estimate $\eqref{pri:1.4}$ will come from two sources.
One is straightforwardly from the duality method as J. Geng and Z. Shen did in \cite{GZS}, the
other is actually attributed to the weight function $\delta$.
Since the duality method has been well illustrated in \cite{GZS,QX2}, we do not
repeat here.

Up to now, we have shown the main tricks related to the estimates $\eqref{pri:1.3}$ and
$\eqref{pri:1.4}$, and so to $\eqref{pri:1.1}$.
We mention that the estimate $\eqref{pri:1.3}$ may play a fundamental part in
further quantitative estimates, such as uniform H\"older estimates and $W^{1,p}$
estimates with $1<p\leq\infty$. This is an active field and
some of them have been established
through compactness methods (see \cite{GZS1}).
We also highly recommend \cite{SZW12} for recent developments in periodic homogenization theory, as well as \cite{SACS,SZ} for a non-periodic setting.

We end this section by two remarks.
\begin{remark}
\emph{We emphasis that the expression
$S_\varepsilon K_\varepsilon(\Psi_{[4\varepsilon^2,2\varepsilon]}\nabla_j u_0)$
in $w_\varepsilon$ can not be
replaced by $S_\varepsilon(\Psi_{[4\varepsilon^2,2\varepsilon]}\nabla u_0)$ in \cite{GZS},
even though we are able to establish the weighted-type estimates for $\nabla u_0$
in $\Sigma_{\varepsilon^2,\varepsilon}^T$ (see Corollary $\ref{cor:2.1}$).
In concrete calculations, $K_\varepsilon$ will serve as a role in eliminating  one spatial derivative, by reason of that there is no good way of bounding derivatives of third order.
Note that there naturally hold global regularity estimates for
$\|\partial_t u_0\|_{L^2(\Omega_T)}$ and $\|\nabla^2 u_0\|_{L^2(\Omega_T)}$
provided $\partial\Omega\in C^{1,1}$ as in \cite{GZS,MYMTS}.
By contrast, for a Lipschitz cylinder, we have to rely on some subtle arguments mentioned before.}
\end{remark}

\begin{remark}
\emph{We point out that
the arguments developed in this paper can be extended to other
initial-boundary problems, and to the parabolic operators with lower order terms.
The crucial estimates actually relies on the symmetry assumption on $\mathcal{L}_\varepsilon$,
while
the methods for getting rid of it have been studied in recent work \cite{GX},
which will possibly illuminate the sharp uniform estimate with regard to smooth cylinders.}
\end{remark}

The paper is organized as follows. Secton $\ref{section:2}$ is mainly to show the weighted-type estimates
for the smoothing operator at scale $\varepsilon$ in terms of t-anisotropic Sobolev spaces.
Section $\ref{section:3}$ is designed to establish the estimate $\eqref{pri:1.3}$ and the proof of Theorem
$\ref{thm:1.1}$ will be presented in Section $\ref{section:4}$.

\section{Preliminaries}\label{section:2}

\subsection{Notation}\label{subsection:2.0}

We first introduce notation for derivatives.
\begin{enumerate}
  \item $\nabla u = (\nabla_1,\cdots,\nabla_d)$ is the gradient of $u$ with respect to spatial variable, where $\nabla_i u = \partial u/\partial x_i$ denotes the $i^{\text{th}}$ spatial derivative of $u$. $\nabla^2 u = (\nabla^2_{ij}u)_{d\times d}$ denotes the Hessian matrix
      of $u$, where $\nabla^2_{ij} u = \frac{\partial^2 u}{\partial x_i\partial x_j}$.
  \item $\partial_t u = \partial u/\partial t$ briefly
      represents the derivative of $u$ with respect to the time variable.
\end{enumerate}

The following notation represents function spaces and weighted-type norms.
\begin{enumerate}
  \item The Sobolev space $W_2^{1,1}(\Omega_T) = H^1(\Omega_T)$ is the Banach space consisting
  of the elements of $L^2(\Omega_T)$ having weak derivatives of the forms $\partial_t u$ and
  $\nabla_i u$ with $i=1,\cdots,d$.
  The space $W_2^{1,0}(\Omega_T) = L^2(0,T;H^1(\Omega))$ is a proper one for weak solutions, and
  $W_{2,\text{loc}}^{2,1}(\Omega_T)$ presents the function space
  $W_{2}^{2,1}(\Omega_T)$ in a local sense. These function spaces can be found in \cite{LCE,OAL}.
  \item The weighted-type norms are defined by
\begin{equation}\label{eq:2.6}
 \big\|f\big\|_{L^2(\Sigma_{r^2,r}^T;\omega)}
= \int_{r^2}^{T-r^2}\int_{\Sigma_{r}}|f(x,t)|^2\omega(x,t)dxdt,
\end{equation}
where the weight function $\omega$ may be chosen from $\delta$ and $\delta^{-1}$.
\end{enumerate}

\subsection{$L^2$ theory}

\begin{thm}\label{thm:2.1}
Suppose that $A$ satisfies $\eqref{c:1}$. Let $F\in L^2(0,T; (H^{-1}(\Omega))^d)$ and
$g\in (H^{\frac{1}{2},\frac{1}{4}}(S_T))^d$ with $h\in (L^2(\Omega))^d$. Then there exists
a unique weak solution $u_\varepsilon\in L^2(0,T;(H^1(\Omega))^d)\cap L^\infty(0,T;(L^2(\Omega))^d)$
to $(\emph{DP}_\varepsilon)$ satisfying the uniform energy estimate
\begin{equation}\label{pri:2.18}
\sup_{0\leq t\leq T}\|u_\varepsilon\|_{L^2(\Omega)}
+ \big\|\nabla u_\varepsilon\big\|_{L^2(\Omega_T)}
\leq C\Big\{\big\|F\big\|_{L^2(0,T;H^{-1}(\Omega))}+\big\|h\big\|_{L^2(\Omega)}
+ \big\|g\big\|_{H^{1/2,1/4}(S_T)}\Big\},
\end{equation}
where $C$ depends on $\mu_1,\mu_2,d,T$ and $\Omega$.
\end{thm}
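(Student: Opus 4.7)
The plan is to reduce the inhomogeneous boundary problem to one with zero lateral data by an extension argument, and then apply the standard Lions/Galerkin existence theory, with the elasticity symmetry $\eqref{c:1}$ providing coercivity via Korn's first inequality.

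First, since $g\in H^{1/2,1/4}(S_T)$ is precisely the trace space on $S_T$ of the anisotropic Sobolev class $\mathcal{W}:=\{G\in L^2(0,T;H^1(\Omega)):\partial_t G\in L^2(0,T;H^{-1}(\Omega))\}$ (the trace/extension theory available for Lipschitz cylinders; see the references cited for $H^{1/2,1/4}(S_T)$), I would lift $g$ to an extension $G\in(\mathcal{W})^d$ with
$$\|G\|_{L^2(0,T;H^1(\Omega))}+\|\partial_t G\|_{L^2(0,T;H^{-1}(\Omega))}\leq C\|g\|_{H^{1/2,1/4}(S_T)}.$$
Since elements of $\mathcal{W}$ embed continuously into $C([0,T];L^2(\Omega))$, the slice $G(\cdot,0)$ is well defined in $L^2(\Omega)$. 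Setting $v_\varepsilon=u_\varepsilon-G$, the unknown $v_\varepsilon$ satisfies a system of the same structure but with zero lateral data, modified source $\widetilde F=F-\partial_t G-\mathcal{L}_\varepsilon G\in L^2(0,T;(H^{-1}(\Omega))^d)$ (bounded in terms of the data), and modified initial value $\widetilde h=h-G(\cdot,0)\in(L^2(\Omega))^d$.

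Second, I would construct $v_\varepsilon$ by Galerkin approximation in $(H^1_0(\Omega))^d$. The key coercivity bound uses $\eqref{c:1}$: the pointwise ellipticity applies to symmetric tensors only, but the major symmetry $a_{ij}^{\alpha\beta}=a_{\alpha j}^{i\beta}$ allows the rewriting
$$\int_\Omega a_{ij}^{\alpha\beta}\bigl(x/\varepsilon,t/\varepsilon^2\bigr)\nabla_j\phi^\beta\nabla_i\phi^\alpha\,dx\geq \mu_1\int_\Omega|e(\phi)|^2\,dx,\qquad e(\phi)=\tfrac{1}{2}\bigl(\nabla\phi+(\nabla\phi)^{T}\bigr).$$
Applying the first Korn inequality on $(H^1_0(\Omega))^d$ (from the cited \cite{VSO}) yields the G\aa rding-type bound $\mu_1\|e(\phi)\|_{L^2(\Omega)}^2\geq c\|\nabla\phi\|_{L^2(\Omega)}^2$, placing the problem squarely in the abstract parabolic framework of Lions. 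Existence of $v_\varepsilon\in L^2(0,T;(H^1_0(\Omega))^d)\cap L^\infty(0,T;(L^2(\Omega))^d)$ with $\partial_t v_\varepsilon\in L^2(0,T;(H^{-1}(\Omega))^d)$ then follows in the standard way.

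Third, testing the weak formulation for $v_\varepsilon$ against $v_\varepsilon$ itself, using $\langle\partial_t v_\varepsilon,v_\varepsilon\rangle=\tfrac{1}{2}\partial_t\|v_\varepsilon\|_{L^2(\Omega)}^2$, the coercivity above, and the duality $|\langle\widetilde F,v_\varepsilon\rangle|\leq \tfrac{c}{2}\|\nabla v_\varepsilon\|_{L^2}^2+C\|\widetilde F\|_{H^{-1}}^2$, then integrating in time and invoking Gr\"onwall, gives
$$\sup_{0\leq t\leq T}\|v_\varepsilon(\cdot,t)\|_{L^2(\Omega)}^2+\|\nabla v_\varepsilon\|_{L^2(\Omega_T)}^2\leq C\bigl\{\|\widetilde F\|_{L^2(0,T;H^{-1}(\Omega))}^2+\|\widetilde h\|_{L^2(\Omega)}^2\bigr\}.$$
Unwinding the substitution $u_\varepsilon=v_\varepsilon+G$ and using the norm control on $G$ produces $\eqref{pri:2.18}$. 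Uniqueness follows by applying the same identity to the difference of two putative solutions, which satisfies the homogeneous problem.

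The main obstacle is the lateral extension: on a Lipschitz cylinder one must ensure that $H^{1/2,1/4}(S_T)$ is genuinely the trace space of $\mathcal{W}$ and that the extension $G$ has a well-defined initial slice $G(\cdot,0)\in L^2(\Omega)$ so that $\widetilde h$ makes sense (this is the implicit compatibility built into the space). Once this trace/extension step is granted from the cited literature, the remainder is routine, with the only system-specific input being the use of Korn's first inequality, driven by the symmetry hypothesis $\eqref{c:1}$, in place of the usual pointwise coercivity on $\nabla\phi$.
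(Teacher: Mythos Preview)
Your proposal is correct and follows essentially the same strategy as the paper: lift $g$ to an extension $G$, subtract to obtain a problem with zero lateral data, and then invoke standard parabolic existence/energy theory together with Korn's first inequality to supply coercivity under the elasticity condition $\eqref{c:1}$. The only notable implementation difference is that the paper constructs $G$ concretely by solving the scalar heat equation componentwise (via Costabel's results \cite{MC}), which yields $G(\cdot,0)=0$ and $\partial_tG=\Delta G$, so the initial datum remains $h$ and the new source is in pure divergence form; your abstract trace/extension route works equally well but requires carrying the modified initial datum $\widetilde h=h-G(\cdot,0)$.
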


\begin{proof}
We first prove the existence of weak solution $u_\varepsilon$.
Following the notation from \cite{MC}, we define
$\tilde{H}^{1,\frac{1}{2}}(\Omega_T) =
\{u\in H^{1,\frac{1}{2}}(\Omega\times(-\infty,T]):u=0~\text{for}~t<0\}$.
On account of \cite[Theorem 2.9]{MC} and \cite[Theorem 3.4]{MC}, there
exists $G^\alpha\in \tilde{H}^{1,\frac{1}{2}}(\Omega_T)$ such that
$\|G^\alpha\|_{\tilde{H}^{1,1/2}(\Omega_T)}\leq C\|g^\alpha\|_{H^{\frac{1}{2},\frac{1}{4}}(S_T)}$, and
\begin{equation*}
 \partial_t G^\alpha - \Delta G^\alpha = 0 \quad\text{in}~\Omega_T,
\qquad G^\alpha= g^\alpha \quad\text{on}~S_T,\quad \text{and} \quad G^\alpha = 0 \quad\text{on}~\Omega\times\{t=0\}.
\end{equation*}

Let $z_\varepsilon = u_\varepsilon - G$, where $G=(G^1,\cdots, G^d)$, and we have
\begin{equation}\label{f:2.3}
\left\{\begin{aligned}
\big(\partial_t + \mathcal{L}_\varepsilon\big)(z_\varepsilon)
&= F + \text{div}(\nabla G - A_\varepsilon\nabla G) &\quad&\text{in}~\Omega_T, \\
z_\varepsilon
&= 0 &\quad&\text{on}~S_T,\\
z_\varepsilon
&= h &\quad&\text{on}~\Omega\times\{t=0\},
\end{aligned}\right.
\end{equation}
where $A_\varepsilon(x,t) = A(x/\varepsilon,t/\varepsilon^2)$, and
we use the fact that $\partial_t G = \Delta G$ in $\Omega_T$.
Then the source term in $\eqref{f:2.3}$ belongs to $L^2(0,T;(H^{-1}(\Omega))^d)$, bounded by
$\|F\|_{L^2(0,T;H^{-1}(\Omega))}+\|g\|_{H^{\frac{1}{2},\frac{1}{4}}(S_T)}$.
Therefore, the existence of weak solution
$u_\varepsilon$ to $(\text{DP}_\varepsilon)$ is reduced to
finding a weak solution $z_\varepsilon$ for
$\eqref{f:2.3}$, and it has been done by \cite[Theorem 3, pp.378]{LCE}.
The uniqueness of the weak solution $u_\varepsilon$ may be
easily derived by the energy inequality $\eqref{pri:2.18}$, and this is what we do in next step.

For the equation $\eqref{f:2.3}$, it follows from \cite[Lemma 2.1, Chapter III]{OAL} that
\begin{equation*}
\big\|\nabla z_\varepsilon\big\|_{L^2(\Omega_T)}
\leq C\Big\{\big\|F\big\|_{L^2(0,T;H^{-1}(\Omega))}
+\big\|g\big\|_{H^{1/2,1/4}(S_T)}
+\big\|h\big\|_{L^2(\Omega)}\Big\},
\end{equation*}
where we need to employ the elasticity condition $\eqref{c:1}$ coupled with the first Korn inequality (see \cite[pp.371]{VSO}),
and this implies
\begin{equation}\label{pri:2.3}
 \|\nabla u_\varepsilon\|_{L^2(\Omega_T)}
\leq C\Big\{\big\|F\big\|_{L^2(0,T;H^{-1}(\Omega))}
+\big\|g\big\|_{H^{1/2,1/4}(S_T)}
+\big\|h\big\|_{L^2(\Omega)}\Big\}.
\end{equation}
From this estimate, we know that $\partial_t u_\varepsilon\in L^2(0,T;(H^{-1}(\Omega))^d)$, and
this together with \cite[Theorem 3, pp.303]{LCE} and the estimate $\eqref{pri:2.3}$ leads to
\begin{equation*}
\begin{aligned}
\sup_{0\leq t\leq T}\|u_\varepsilon\|_{L^2(\Omega)}
&\leq C\Big\{\|\nabla u_\varepsilon\|_{L^2(\Omega_T)}
+\|\partial_tu_\varepsilon\|_{L^2(0,T;H^{-1}(\Omega))}\Big\}\\
&\leq C\Big\{\big\|F\big\|_{L^2(0,T;H^{-1}(\Omega))}
+\big\|g\big\|_{H^{1/2,1/4}(S_T)}
+\big\|h\big\|_{L^2(\Omega)}\Big\},
\end{aligned}
\end{equation*}
where we use the equation
$\partial_t u_\varepsilon = F - \mathcal{L}_\varepsilon(u_\varepsilon)$ and $\eqref{pri:2.3}$ in
the second step. We have completed the proof.
\end{proof}

\begin{flushleft}
\textbf{Proof of Theorem $\ref{thm:1.2}$.}
The proof is quite similar to that given for \cite[Theorem 3.6]{GZS1} in the case of $g=0$, which
follows from the estimate $\eqref{pri:2.18}$ and Tartar's test function methods
(it actually does not involve any boundary condition or initial data). Thus, without a proof, we
straightforwardly show the following facts:
\end{flushleft}
\begin{equation}\label{f:2.4}
\begin{array}{c}
u_\varepsilon \rightharpoonup u_0 \quad \text{weakly~in}~L^2(0,T;(H^1(\Omega))^d), \\
\partial_tu_\varepsilon \rightharpoonup \partial_t u_0 \quad
\text{weakly~in}~L^2(0,T;(H^{-1}(\Omega))^d), \\
A_\varepsilon \nabla u_\varepsilon
\rightharpoonup \widehat{A}u_0 \quad \text{weakly~in}~(L^2(\Omega_T))^d,
\end{array}
\end{equation}
where $u_0$ satisfies $\partial_t u_0 -\text{div}(\widehat{A}\nabla u_0) = F$ in $\Omega_T$.
Then we plan to verify $u_0 = g$ on $S_T$ in a trace sense, and $u_0(x,0) = h(x)$ for a.e.
$x\in \Omega$. It follows from $\eqref{f:2.4}$ together with the Aubin-Lions-Simon theorem that
\begin{equation}\label{f:2.10}
\begin{aligned}
u_\varepsilon \rightarrow u_0 \quad &\text{strongly~in}~(L^2(\Omega_T))^d.
\end{aligned}
\end{equation}
Also, in view of \cite[Theorem 3, pp.303]{LCE},
we have $u_\varepsilon,u_0\in C([0,T];(L^2(\Omega))^d)$. Our now task is to verify $u_0 = h$
on $\Omega\times\{t=0\}$. Let $\psi=\varphi_x\varphi_t$ be a test function,
where $\varphi_x\in C_0^1(\Omega)$ and $\varphi_t\in C^1([0,T])$ satisfying
$\varphi_t(0) = 1$ and $\varphi(T) = 0$.
By reusing $\eqref{f:2.4}$, we have
\begin{equation*}
\begin{aligned}
 -\int_{\Omega_T}\big(A_\varepsilon\nabla u_\varepsilon - \widehat{A}\nabla u_0\big)\cdot\nabla\psi dxdt
&= \int_0^T \big<\partial_t(u_\varepsilon-u_0),\psi\big>dt \\
& = - \int_{\Omega_T}\big(u_\varepsilon - u_0\big)\partial_t\psi dt
+ \big<u_\varepsilon-u_0,\psi\big>\bigg|_{t=0}^T.
\end{aligned}
\end{equation*}
This gives
\begin{equation*}
\big<h-u_0,\varphi_x\big> =
\int_{\Omega_T}\big(u_\varepsilon - u_0\big)\partial_t\psi dt
-\int_{\Omega_T}\big(A_\varepsilon\nabla u_\varepsilon - \widehat{A}\nabla u_0\big)\cdot\nabla\psi dxdt \to 0,\quad\text{as}~\varepsilon\to0,
\end{equation*}
where we employ $\eqref{f:2.4}$ and $\eqref{f:2.10}$ in the last step.
The desired result directly follows from the arbitrary choosing $\varphi_x\in C_0^1(\Omega)$.
The next step is to show $u_0 = g$ on $S_T$. Owing to $\eqref{f:2.10}$ and $\eqref{f:2.4}$, we can  derive $u_\varepsilon\to u_0$ strongly in $L^2(S_T)$, just by noting
\begin{equation*}
\int_{S_T}|u_\varepsilon - u_0|^2 dxdt
\leq C\Big\{\int_{\Omega_T} |u_\varepsilon-u_0|^2 dxdt
+\|u_\varepsilon-u_0\|_{L^2(\Omega_T)}\|\nabla u_\varepsilon\|_{L^2(\Omega_T)}
\Big\}.
\end{equation*}
This implies $u_0 = g$ on $S_T$ in the trace sense, and we end the proof here.
\qed

\subsection{Correctors and its properties}\label{subsection2.2}
Let $Y=(0,1]^{d+1}\simeq \mathbb{R}^{d+1}/\mathbb{Z}^{d+1}$. Define the correctors
$\chi_j^\beta(y,\tau) = (\chi_j^{\gamma\beta}(y,\tau))$ associated with the
parabolic system $(\text{PD}_\varepsilon)$ by the following cell problem:
\begin{equation}
\left\{\begin{aligned}
& \big(-\partial_\tau + \mathcal{L}_1\big)(\chi_j^\beta+ P_j^\beta) = 0 \quad \text{in}~Y,\\
& ~\chi_j^\beta(y,\tau) \text{is~1-periodic~in}~(y,\tau),\\
&~\dashint_Y\chi_j^\beta dyd\tau = 0, \quad \text{with}~j,\beta = 1,\cdots,d,
\end{aligned}\right.
\end{equation}
where $P_j^\beta(y) = y_je^\beta$, and $e^\beta = (0,\cdots,1,\cdots,0)$ with 1 in the
$\beta^{\text{th}}$ position.
Since there is no boundary term produced by taking integration by parts, it follows from energy inequality \cite[pp.139]{OAL} that
\begin{equation}
  \big\|\nabla\chi\big\|_{L^2(Y)} \leq C(\mu_1,\mu_2,d).
\end{equation}
By asymptotic expansion arguments the homogenized operator is given by
$\partial_t + \mathcal{L}_0 = \partial_t -\text{div}(\widehat{A}\nabla)$, where
$\widehat{A} = (\hat{a}_{ij}^{\alpha\beta})$ and
\begin{equation}\label{eq:2.7}
\hat{a}_{ij}^{\alpha\beta} = \int_Y \Big[a_{ij}^{\alpha\beta}(y,\tau)
+a_{ik}^{\alpha\gamma}\frac{\partial\chi_j^{\gamma\beta}}{\partial y_k}(y,\tau)\Big]dyd\tau
\end{equation}
(see \cite{ABJLGP,GZ,GZS}).

\begin{lemma}\label{lemma:2.3}
Let $1\leq j\leq d$ and $1\leq\alpha,\gamma\leq d$, and
\begin{equation}\label{eq:2.1}
b_{ij}^{\alpha\gamma}(y,\tau)
= \hat{a}_{ij}^{\alpha\gamma}
- a_{ij}^{\alpha\gamma}(y,\tau)
- a_{ik}^{\alpha\beta}(y,\tau)\frac{\partial\chi_j^{\beta\gamma}}{\partial y_k}(y,\tau),
\qquad b_{(d+1)j}^{\alpha\gamma}(y,\tau) = \chi_j^{\alpha\gamma}(y,\tau),
\end{equation}
where $y=x/\varepsilon$ and $\tau = t/\varepsilon^2$. Then the quantity $b_{ij}^{\alpha\gamma}$
with $i=1,\cdots,d+1$ satisfies
two properties:
\begin{equation}\label{eq:2.3}
\emph{(i)} \quad \dashint_{Y} b_{ij}^{\alpha\gamma}(y,\tau)dyd\tau = 0;
\qquad\text{and}\qquad
\emph{(ii)} \quad \sum_{i=1}^{d+1}\frac{\partial b_{ij}^{\alpha\gamma}}{\partial y_i} =0.
\qquad\text{and}\qquad
\end{equation}
Moreover, there exists $E_{kij}^{\alpha\gamma}\in H_{per}^1(Y)$ such that
\begin{equation}\label{eq:2.2}
b_{ij}^{\alpha\gamma} = \sum_{k=1}^{d+1}\frac{\partial}{\partial y_k}\big\{E_{kij}^{\alpha\gamma}\big\},
\qquad
E_{kij}^{\alpha\gamma} = - E_{ikj}^{\alpha\gamma},
\qquad\text{and}\qquad
\|E_{kij}^{\alpha\gamma}\|_{L^2(Y)}\leq C,
\end{equation}
where $C$ depends only on $\mu$ and $d$.
\end{lemma}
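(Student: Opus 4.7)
The plan is to verify (i) and (ii) directly from the definitions and the corrector equation, and then to produce the skew-symmetric factorization $E_{kij}^{\alpha\gamma}$ by the classical Poisson-equation construction on the $(d+1)$-dimensional torus $Y$, identifying $y_{d+1}$ with $\tau$ throughout.

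For (i) I would split by the range of $i$. When $1\leq i\leq d$, the cell-average of the last two terms in the definition of $b_{ij}^{\alpha\gamma}$ is exactly $\hat{a}_{ij}^{\alpha\gamma}$ by formula \eqref{eq:2.7}, so the difference has zero mean. When $i=d+1$ the property reduces to $\dashint_Y \chi_j^{\alpha\gamma}\,dy\,d\tau = 0$, which is the normalization built into the corrector. For (ii), I would expand the cell problem $(-\partial_\tau + \mathcal{L}_1)(\chi_j^\beta + P_j^\beta) = 0$ component-wise; after swapping the dummy indices $\beta$ and $\gamma$, this supplies exactly the identity between $\partial_\tau \chi_j^{\alpha\gamma}$ and $\partial_{y_i}[a_{ij}^{\alpha\gamma} + a_{ik}^{\alpha\beta}\partial_{y_k}\chi_j^{\beta\gamma}]$ needed for the spatial-divergence contribution $\sum_{i=1}^{d}\partial_{y_i}b_{ij}^{\alpha\gamma}$ to cancel against $\partial_\tau b_{(d+1)j}^{\alpha\gamma}$; the constants $\hat{a}_{ij}^{\alpha\gamma}$ are annihilated by $\partial_{y_i}$.

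The substantial step is the skew-symmetric representation. For fixed $(j,\alpha,\gamma)$, view $(b_{ij}^{\alpha\gamma})_{i=1}^{d+1}$ as a zero-mean, divergence-free vector field on the $(d+1)$-torus $Y$; the two hypotheses are precisely (i) and (ii). Let $N_{ij}^{\alpha\gamma}\in H_{per}^2(Y)$ be the unique zero-mean periodic solution of
\begin{equation*}
\Delta_{(y,\tau)} N_{ij}^{\alpha\gamma} = b_{ij}^{\alpha\gamma}\quad\text{on } Y,
\end{equation*}
which is solvable thanks to (i); elliptic regularity on the torus gives an $H^2$-bound in terms of $\|b_{ij}^{\alpha\gamma}\|_{L^2(Y)}$, and this last quantity is controlled by a constant depending only on $\mu_1,\mu_2,d$ via \eqref{c:1} and the $H^1$ energy estimate for $\chi$. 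Then set
\begin{equation*}
E_{kij}^{\alpha\gamma} := \partial_{y_k} N_{ij}^{\alpha\gamma} - \partial_{y_i} N_{kj}^{\alpha\gamma},
\end{equation*}
which is manifestly antisymmetric in $(k,i)$, lies in $H_{per}^1(Y)$, and satisfies $\|E_{kij}^{\alpha\gamma}\|_{L^2(Y)} \leq 2\|\nabla_{(y,\tau)} N_{ij}^{\alpha\gamma}\|_{L^2(Y)} \leq C$. A direct computation gives
\begin{equation*}
\sum_{k=1}^{d+1}\partial_{y_k} E_{kij}^{\alpha\gamma} = \Delta_{(y,\tau)} N_{ij}^{\alpha\gamma} - \partial_{y_i}\Big(\sum_{k=1}^{d+1}\partial_{y_k} N_{kj}^{\alpha\gamma}\Big) = b_{ij}^{\alpha\gamma},
\end{equation*}
where the correction $\sum_k \partial_{y_k} N_{kj}^{\alpha\gamma}$ is periodic with zero mean and, by (ii), harmonic ($\Delta(\sum_k \partial_{y_k} N_{kj}^{\alpha\gamma}) = \sum_k \partial_{y_k} b_{kj}^{\alpha\gamma} = 0$), hence vanishes identically on the torus.

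The most delicate bookkeeping is in step two, namely ensuring that the definition $b_{(d+1)j}^{\alpha\gamma}=\chi_j^{\alpha\gamma}$ pairs correctly with the $\partial_\tau$-term of the corrector equation so that $\sum_{i=1}^{d+1} \partial_{y_i} b_{ij}^{\alpha\gamma}$ really closes to zero. Once that is dispatched, the remaining work is a standard Hodge-type construction on the torus and uses nothing about the coefficients beyond the uniform $L^2$-bound on $b_{ij}^{\alpha\gamma}$.
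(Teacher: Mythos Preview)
Your argument is correct and is precisely the standard construction; the paper itself does not reproduce a proof but simply refers to \cite[Lemma~2.1]{GZS}, where exactly this Poisson-equation/Hodge construction on the $(d+1)$-torus is carried out. So there is nothing to compare beyond noting that you have written out in full what the paper delegates to the reference.
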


\begin{proof}
See \cite[Lemma 2.1]{GZS}.
\end{proof}

\subsection{Smoothing operator and its properties}

\begin{definition}\label{def:1}
\emph{Fix $\eta\in C_0^\infty(B(0,1/2))$ with $\int_{\mathbb{R}^d}\eta dx = 1$. Define a smoothing
operator associated with the spatial variable as
\begin{equation}
 K_\varepsilon(f)(x,t) = \int_{\mathbb{R}^d}\eta_\varepsilon(x-z)f(z,t)dz,
\end{equation}
where $\eta_\varepsilon(x)=\varepsilon^{-d}\eta(x/\varepsilon)$.
Let $\zeta\in C_0^\infty(P(0,1/2))$ satisfy $\int_{\mathbb{R}^{d+1}}\zeta dx = 1$.
Define a parabolic smoothing operator as
\begin{equation}
 S_\varepsilon(f)(x,t) = \int_{\mathbb{R}^{d+1}}\zeta_\varepsilon(x-z,t-s)f(z,s)dzds,
\end{equation}
where $\zeta_\varepsilon(x,t)=\varepsilon^{-d-2}\zeta(x/\varepsilon,t/\varepsilon^2)$.}
\end{definition}

\begin{lemma}
Let $f\in L^p(\mathbb{R}^{d+1})$ with $1\leq p<\infty$, then for any $\varpi\in L^p_{per}(Y)$ we have
\begin{equation}\label{pri:2.1}
\begin{aligned}
\big\|\varpi(\cdot/\varepsilon,\cdot/\varepsilon^2)S_\varepsilon(f)\big\|_{L^p(\mathbb{R}^{d+1})}
&\leq C\|\varpi\|_{L^p(Y)}\|f\|_{L^p(\mathbb{R}^{d+1})}, \\
\big\|\varpi(\cdot/\varepsilon,\cdot/\varepsilon^2)
\nabla S_\varepsilon(f)\big\|_{L^p(\mathbb{R}^{d+1})}
&\leq C\varepsilon^{-1}\|\varpi\|_{L^p(Y)}\|f\|_{L^p(\mathbb{R}^{d+1})}, \\
\big\|\varpi(\cdot/\varepsilon,\cdot/\varepsilon^2)\partial_tS_\varepsilon(f)\big\|_{L^p(\mathbb{R}^{d+1})}
&\leq C\varepsilon^{-2}\|\varpi\|_{L^p(Y)}\|f\|_{L^p(\mathbb{R}^{d+1})},
\end{aligned}
\end{equation}
where $C$ depends on $\zeta$ and $d$.
\end{lemma}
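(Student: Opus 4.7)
The plan is to prove all three estimates by one convolution-plus-H\"older argument; the three cases will differ only in the scaling factors picked up when $S_\varepsilon$ is differentiated. Throughout, write $\varpi_\varepsilon(x,t)=\varpi(x/\varepsilon,t/\varepsilon^2)$.

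First I would derive the pointwise bound
\[
|S_\varepsilon f(x,t)|^p \leq C\,(|\zeta_\varepsilon|\ast|f|^p)(x,t)
\]
from the convolution representation $S_\varepsilon f=\zeta_\varepsilon\ast f$ and H\"older's inequality against the measure $|\zeta_\varepsilon|\,dz\,ds$, using $\|\zeta_\varepsilon\|_{L^1(\mathbb{R}^{d+1})}=\|\zeta\|_{L^1(\mathbb{R}^{d+1})}\le C$. Multiplying by $|\varpi_\varepsilon(x,t)|^p$, integrating in $(x,t)$, and applying Fubini then reduces the first inequality to establishing the uniform bound
\[
N(z,s):=\int_{\mathbb{R}^{d+1}}|\varpi_\varepsilon(x,t)|^p\,|\zeta_\varepsilon(x-z,t-s)|\,dx\,dt \;\leq\; C\,\|\varpi\|_{L^p(Y)}^p.
\]
To bound $N(z,s)$ I would use $|\zeta_\varepsilon|\le C\varepsilon^{-d-2}$ together with $\operatorname{supp}\zeta_\varepsilon(\cdot-z,\cdot-s)\subset P((z,s),\varepsilon/2)$, and rescale by $y=x/\varepsilon$, $\tau=t/\varepsilon^2$. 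The Jacobian $\varepsilon^{d+2}$ cancels the pointwise factor and transforms $P((z,s),\varepsilon/2)$ into $P((z/\varepsilon,s/\varepsilon^2),1/2)$, so $N(z,s)$ becomes a constant multiple of $\int_{P((z/\varepsilon,s/\varepsilon^2),1/2)}|\varpi(y,\tau)|^p\,dy\,d\tau$; the $\mathbb{Z}^{d+1}$-periodicity of $|\varpi|^p$ bounds any integral of $|\varpi|^p$ over a set of uniformly bounded volume by $C\|\varpi\|_{L^p(Y)}^p$ independently of $(z,s)$, which completes the first inequality.

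For the derivative estimates the same two-step argument will run with $\zeta_\varepsilon$ replaced by $\nabla\zeta_\varepsilon$ or $\partial_t\zeta_\varepsilon$. Since $\nabla\zeta_\varepsilon(x,t)=\varepsilon^{-d-3}(\nabla\zeta)(x/\varepsilon,t/\varepsilon^2)$ has $L^1$-norm of size $\varepsilon^{-1}$ and $L^\infty$-norm of size $\varepsilon^{-d-3}$, the H\"older step contributes a factor $\varepsilon^{-(p-1)}$ under the $p$-th power and the rescaled-periodicity estimate adds one more $\varepsilon^{-1}$, multiplying to $\varepsilon^{-p}$; extracting the $p$-th root yields the claimed $\varepsilon^{-1}$. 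Running the identical argument with $\partial_t\zeta_\varepsilon(x,t)=\varepsilon^{-d-4}(\partial_\tau\zeta)(x/\varepsilon,t/\varepsilon^2)$, whose relevant norms are $C\varepsilon^{-2}$ and $C\varepsilon^{-d-4}$, produces $\varepsilon^{-2p}$ inside the integral and hence $\varepsilon^{-2}$ after taking the root.

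No genuine conceptual obstacle arises: all ingredients are classical properties of mollifiers combined with averaging against a periodic function. The only point requiring care is the bookkeeping of powers of $\varepsilon$, so that the H\"older contribution and the rescaled-periodicity contribution combine to give precisely $\varepsilon^{-p}$ and $\varepsilon^{-2p}$ inside the $p$-th power before the root is extracted.
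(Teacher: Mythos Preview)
Your argument is correct and is exactly the standard one: the paper itself does not reprove this lemma but cites \cite[Lemma 3.3, Remark 3.4]{GZS}, and the very same H\"older-then-Fubini-then-rescale-by-periodicity scheme you describe is the one the paper later spells out (for $p=2$, with the weight $\delta^{\pm1}$) in its proof of the Weighted-type inequality~I. Your bookkeeping of the $\varepsilon$-powers in the derivative cases is right.
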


\begin{proof}
See \cite[Lemma 3.3]{GZS} and \cite[Remark 3.4]{GZS}.
\end{proof}

\begin{lemma}\label{lemma:2.2}
Let $X=(x,t)\in \Sigma_{\varepsilon^2,\varepsilon}^T$,
and $\delta(X)$ be given in $\eqref{eq:2.4}$. Then for any $Y\in P_r(X)$, we have
\begin{equation}\label{pri:2.11}
 \big|\delta(X) - \delta(Y)\big|\leq \emph{d}(X,Y) \leq r.
\end{equation}
Moreover, if we define
\begin{equation}
 S_{i,\varepsilon}(\delta)(x,t) = \int_{\mathbb{R}^{d+1}}
 \big|\nabla_i\zeta_\varepsilon(x-y,t-\tau)\big|\delta(y,\tau)dyd\tau
\end{equation}
with $i=0,1,\cdots,d,d+1$,
where $\nabla_0\zeta = \zeta$, and $\nabla_{d+1}\zeta=\partial_t\zeta$.
Then there holds
\begin{equation}\label{pri:2.4}
\big|S_{i,\varepsilon}(\delta)(x,t)\big|
\leq C\delta(x,t)
\quad\text{and}\quad
\big|S_{i,\varepsilon}(\delta^{-1})(x,t)\big|\leq C[\delta(x,t)]^{-1},
\end{equation}
where $C$ depends on $\zeta$ and $d$.
\end{lemma}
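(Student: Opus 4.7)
The plan splits naturally along the two claims: \eqref{pri:2.11} is a purely metric fact about the parabolic distance function, and \eqref{pri:2.4} drops out of it once the $\varepsilon$-support of the mollifier is exploited.

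For \eqref{pri:2.11} I would write $\delta$ as the pointwise minimum of the three building-block functions $\text{dist}(x,\partial\Omega)$, $t^{1/2}$, and $(T-t)^{1/2}$, and check that each is $1$-Lipschitz for the parabolic metric $\text{d}(X,Y)=|x-y|+|t-s|^{1/2}$. The spatial piece is $1$-Lipschitz in $x$ by the usual triangle-inequality argument, hence dominated by $\text{d}(X,Y)$. For the time pieces I would invoke the elementary H\"older inequality $|a^{1/2}-b^{1/2}|\le|a-b|^{1/2}$ on $[0,\infty)$, which yields $|t^{1/2}-s^{1/2}|\le|t-s|^{1/2}\le\text{d}(X,Y)$ and the same for $T-t$. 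Since the pointwise minimum of $1$-Lipschitz functions is again $1$-Lipschitz, $|\delta(X)-\delta(Y)|\le\text{d}(X,Y)\le r$ whenever $Y\in P(X,r)$.

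For \eqref{pri:2.4}, the starting point is that $\zeta\in C_0^\infty(P(0,1/2))$ forces each $\nabla_i\zeta_\varepsilon(X-\cdot)$ to be supported inside $P(X,\varepsilon/2)$, so the integrand vanishes outside that set. Applying \eqref{pri:2.11} with $r=\varepsilon/2$ gives $|\delta(Y)-\delta(X)|\le\varepsilon/2$ for every $Y$ that contributes. The hypothesis $X=(x,t)\in\Sigma_{\varepsilon^2,\varepsilon}^T$ forces $\delta(X)\ge\varepsilon$ (since $\text{dist}(x,\partial\Omega)\ge\varepsilon$, $t^{1/2}\ge\varepsilon$ and $(T-t)^{1/2}\ge\varepsilon$ all hold simultaneously), which upgrades the additive comparison to the two-sided multiplicative estimate
\[
\tfrac{1}{2}\delta(X)\le\delta(Y)\le\tfrac{3}{2}\delta(X)\qquad\text{for every }Y\in P(X,\varepsilon/2),
\]
and in particular $\delta(Y)^{-1}\le 2\,\delta(X)^{-1}$ on the same set. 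I can then pull $\delta(X)^{\pm 1}$ outside the integral defining $S_{i,\varepsilon}$ at the price of a harmless multiplicative factor, reducing both inequalities in \eqref{pri:2.4} to the scalar statement that $\int_{\mathbb{R}^{d+1}}|\nabla_i\zeta_\varepsilon(x-y,t-\tau)|\,dyd\tau$ is absorbed into a constant depending only on $\zeta$ and $d$.

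I do not anticipate a substantive obstacle. The only point deserving care is matching the half-order H\"older exponent of $\sqrt{\,\cdot\,}$ to the definition of the parabolic metric, so that all three constituents of $\delta$ are handled with a single $1$-Lipschitz constant; the layer hypothesis $X\in\Sigma_{\varepsilon^2,\varepsilon}^T$ then converts the additive Lipschitz estimate into the multiplicative two-sided bound that legitimizes the freezing of $\delta$ on the support of the mollifier.
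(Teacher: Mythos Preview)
Your proposal is correct and follows essentially the same route as the paper: use the $1$-Lipschitz property of $\delta$ with respect to the parabolic metric to compare $\delta(Y)$ with $\delta(X)$ on the support of the mollifier, then invoke $\delta(X)\ge\varepsilon$ to convert this into a multiplicative bound and pull the weight outside the integral. The only cosmetic difference is in \eqref{pri:2.11}: the paper chooses a minimizing boundary point $X_0$ and applies the triangle inequality for $\text{d}$, whereas you argue that each of the three constituents of the minimum is separately $1$-Lipschitz and that the minimum of $1$-Lipschitz functions is $1$-Lipschitz; both arguments are equally short and valid.
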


\begin{remark}
\emph{Note that there must be $S_{0,\varepsilon} = S_\varepsilon$ by definition.
So the symbol $S_{0,\varepsilon}$ is only used to simplify the statements of the lemma,
and will not appear in any other place.}
\end{remark}

\begin{proof}
The estimate $\eqref{pri:2.11}$ is easily observed, and we provide a proof for the sake of completeness. Let $X_0=(x_0,t_0)\in\partial\Omega_T$ be the point such that
$\delta(X) = |x-x_0| + |t-t_0|^{\frac{1}{2}}$, and we remark that either $x=x_0$ or $t=t_0$.
According to the definition of distance function (see $\eqref{eq:2.4}$), it is not hard
to see that for any $Y\in P_\varepsilon(X)$ with $Y=(y,s)$,
\begin{equation*}
\delta(Y) - \delta(X)
\leq |y-x_0| + |s-t_0|^{\frac{1}{2}} - |y-x_0| - |t-t_0|^{\frac{1}{2}}
\leq |y-x| + |s-t|^{\frac{1}{2}} = \text{d}(X,Y)\leq r,
\end{equation*}
and interchanging the variable $X$ and $Y$ leads to the same type inequality. This implies
the desired estimate $\eqref{pri:2.11}$.

We now proceed to prove the first estimate in $\eqref{pri:2.4}$,
the main idea is to quantify the difference between $\delta$ and $S_{i,\varepsilon}(\delta)$. It is clear to see that
\begin{equation}\label{f:2.1}
\begin{aligned}
\Big|S_{i,\varepsilon}(\delta)(x,t)\Big|
&\leq
\int_{\mathbb{R}^{d+1}}\big|\nabla_i\zeta_\varepsilon(x-y,t-s)\big|\big|\delta(y,s)
-\delta(x,t)\big| dyds
+ C\delta(x,t)\\
&\leq C\Big\{\varepsilon + \delta(x,t)\Big\},
\end{aligned}
\end{equation}
where we use the estimate $\eqref{pri:2.11}$ in the last step. Since $\delta(x,t)\geq \varepsilon$,
we have already proved the first estimate of $\eqref{pri:2.4}$. An argument similar to
the one used in $\eqref{f:2.1}$ will show the second one in $\eqref{pri:2.4}$, and we are done.
\end{proof}

\begin{lemma}\label{lemma:2.4}
Assume $f\in W^{1,0}_2(\mathbb{R}^{d+1})$.
Then we have
\begin{equation}\label{pri:2.17}
\begin{aligned}
\big\|K_\varepsilon(f)\big\|_{L^2(\mathbb{R}^{d+1})}
&\leq C\big\|f\big\|_{L^2(\mathbb{R}^{d+1})},\\
\big\|\nabla K_\varepsilon(f)\big\|_{L^2(\mathbb{R}^{d+1})}
&\leq C\varepsilon^{-1}\big\|f\big\|_{L^2(\mathbb{R}^{d+1})},
\end{aligned}
\end{equation}
as well as,
\begin{equation}\label{pri:2.13}
\big\|f-K_\varepsilon(f)\big\|_{L^2(\mathbb{R}^{d+1})}
\leq C\varepsilon \big\|\nabla f\big\|_{L^2(\mathbb{R}^{d+1})},
\end{equation}
where $C$ depends on $d$ and $\eta$. Moreover, if $f\in W_2^{2,1}(\mathbb{R}^{d+1})$, then
there holds
\begin{equation}\label{pri:2.14}
\big\|f-S_\varepsilon(f)\big\|_{L^2(\mathbb{R}^{d+1})}
\leq C\varepsilon \big\|\nabla f\big\|_{L^2(\mathbb{R}^{d+1})}
+ C\varepsilon^2\big\|\partial_t f\big\|_{L^2(\mathbb{R}^{d+1})},
\end{equation}
where $C$ depends on $d$ and $\zeta$.
\end{lemma}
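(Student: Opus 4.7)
The plan is to treat the four bounds by distinguishing two ingredients: Young's inequality for the operator norm bounds and a Fourier-side Taylor expansion for the approximation estimates.

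First I would dispatch the two inequalities in \eqref{pri:2.17}. Since $K_\varepsilon(f)(x,t) = (\eta_\varepsilon \ast_x f(\cdot,t))(x)$ is a convolution in the spatial variable only, Minkowski's inequality (or Young's inequality in $x$ at fixed $t$, followed by integration in $t$) yields
\begin{equation*}
\|K_\varepsilon(f)\|_{L^2(\mathbb{R}^{d+1})} \leq \|\eta_\varepsilon\|_{L^1(\mathbb{R}^d)}\|f\|_{L^2(\mathbb{R}^{d+1})} = \|\eta\|_{L^1(\mathbb{R}^d)}\|f\|_{L^2(\mathbb{R}^{d+1})}.
\end{equation*}
For the gradient bound, differentiation under the convolution gives $\nabla K_\varepsilon(f) = (\nabla\eta_\varepsilon)\ast_x f$, and $\|\nabla\eta_\varepsilon\|_{L^1(\mathbb{R}^d)} = \varepsilon^{-1}\|\nabla\eta\|_{L^1(\mathbb{R}^d)}$, so the same Young-type argument delivers the factor $\varepsilon^{-1}$.

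Next I would prove \eqref{pri:2.13} by passing to the spatial Fourier transform at each fixed time. Writing $\widehat{K_\varepsilon(f)}(\xi,t) = \hat{\eta}(\varepsilon\xi)\hat{f}(\xi,t)$ and recalling that $\int\eta\,dx=1$ forces $\hat{\eta}(0)=1$, the smoothness and decay of $\hat{\eta}$ give the uniform bound $|1-\hat{\eta}(\varepsilon\xi)|\leq C\varepsilon|\xi|$. Plancherel's theorem in the spatial variable, followed by integration in $t$, then yields
\begin{equation*}
\|f - K_\varepsilon(f)\|_{L^2(\mathbb{R}^{d+1})}^2 = \int_{\mathbb{R}}\int_{\mathbb{R}^d} |1-\hat{\eta}(\varepsilon\xi)|^2 |\hat{f}(\xi,t)|^2 d\xi dt \leq C\varepsilon^2 \|\nabla f\|_{L^2(\mathbb{R}^{d+1})}^2.
\end{equation*}

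For \eqref{pri:2.14} I would use the full space-time Fourier transform. Since $S_\varepsilon$ is a convolution in $(x,t)$ with $\zeta_\varepsilon$, one has $\widehat{S_\varepsilon(f)}(\xi,\tau) = \hat{\zeta}(\varepsilon\xi,\varepsilon^2\tau)\hat{f}(\xi,\tau)$, and $\int\zeta\,dxdt = 1$ forces $\hat{\zeta}(0,0)=1$. A first-order Taylor expansion of the smooth multiplier gives $|1-\hat{\zeta}(\varepsilon\xi,\varepsilon^2\tau)|\leq C(\varepsilon|\xi| + \varepsilon^2|\tau|)$. Squaring, using $(a+b)^2\leq 2(a^2+b^2)$, and invoking Plancherel to identify $\int|\xi|^2|\hat{f}|^2 = \|\nabla f\|_{L^2}^2$ and $\int|\tau|^2|\hat{f}|^2 = \|\partial_t f\|_{L^2}^2$ (the latter being exactly where the hypothesis $f\in W^{2,1}_2(\mathbb{R}^{d+1})$ enters, guaranteeing $\tau\hat{f}\in L^2$), I obtain the anisotropic bound with the correct $\varepsilon$ and $\varepsilon^2$ weights. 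The only mild subtlety, and the place where one has to be careful, is precisely this mismatch of the two scales in time versus space; the Fourier approach handles it transparently, whereas a direct convolution estimate would require splitting $f(x-y,t-s)-f(x,t)$ into spatial and temporal increments and applying Minkowski's inequality in integral form twice. Since no serious obstruction arises, the proof is essentially a bookkeeping exercise.
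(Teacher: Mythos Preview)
Your proof is correct and follows essentially the same route as the paper: Plancherel together with the Taylor bound $|1-\hat\eta(\varepsilon\xi)|\le C\varepsilon|\xi|$ for \eqref{pri:2.13}, and the anisotropic analogue $|1-\hat\zeta(\varepsilon\xi,\varepsilon^2\tau)|\le C(\varepsilon|\xi|+\varepsilon^2|\tau|)$ for \eqref{pri:2.14}. The only cosmetic difference is that for \eqref{pri:2.17} you invoke Young's inequality while the paper appeals to Plancherel; both are one-line arguments and neither offers any advantage over the other.
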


\begin{proof}
The estimate $\eqref{pri:2.17}$ follows from the Plancherel theorem immediately,
while the estimates $\eqref{pri:2.13}$ and $\eqref{pri:2.14}$ essentially come from
the absolute continuity of the integral with respect to a small translation.
We will adopt the idea from \cite[Lemma 3.2]{GZS}, originally developed by Z. Shen \cite{SZW12}.
In view of the Plancherel theorem, the left-hand side of $\eqref{pri:2.13}$ is equal to
\begin{equation}\label{f:2.9}
 \int_{\mathbb{R}^{d+1}}|\widehat{f}(\xi,t)|^2 |1-\widehat{\eta}(\varepsilon\xi)|^2d\xi dt
\end{equation}
Since $\widehat{\eta}(0) = 1$, we have $|1-\widehat{\eta}(\varepsilon\xi)|
\leq C\varepsilon|\xi|$. Hence, the quantity $\eqref{f:2.9}$ may be controlled by
\begin{equation*}
  C\varepsilon^2  \int_{\mathbb{R}^{d+1}}|\widehat{f}(\xi,t)|^2 |\xi|^2 dt,
\end{equation*}
and this implies the desired estimate $\eqref{pri:2.13}$ through the Plancherel theorem again.
By the same token, the estimate $\eqref{pri:2.14}$ is based upon the following estimate
\begin{equation*}
  |\widehat{\zeta}(0,0)-\widehat{\zeta}(\varepsilon\xi,\varepsilon^2\rho)|
\leq C\Big\{\varepsilon|\xi|+\varepsilon^2|\rho|\Big\}
\end{equation*}
where the pair $(\xi,\rho)$ is in the phase space, produced by the spacial and
the time variable, respectively.
\end{proof}

Recall the definition of weighted-type norms $\eqref{eq:2.6}$, and the parabolic distance function
$\delta$ is defined in $\eqref{eq:2.4}$.

\begin{lemma}\label{lemma:2.5}
Let $f\in W^{1,0}_2(\Omega_T)$ be supported in $\Sigma_{\varepsilon^2,\varepsilon}^T$.
Then there holds
\begin{equation}\label{pri:2.8}
 \|K_\varepsilon(f)\|_{L^2(\Sigma_{\varepsilon^2,\varepsilon}^T;\omega)}
 \leq C\|f\|_{L^2(\Sigma_{\varepsilon^2,\varepsilon}^T;\omega)},
\end{equation}
\begin{equation}\label{pri:2.9}
 \|\nabla K_\varepsilon(f)\|_{L^2(\Sigma_{\varepsilon^2,\varepsilon}^T;\omega)}
 \leq C\varepsilon^{-1}\|f\|_{L^2(\Sigma_{\varepsilon^2,\varepsilon}^T;\omega)},
\end{equation}
for $\omega = \delta^{\pm 1}$, and
\begin{equation}\label{pri:2.10}
\|K_\varepsilon(f)-f\|_{L^2(\Sigma_{4\varepsilon^2,2\varepsilon}^T;\delta)}
\leq C\varepsilon
\|\nabla f\|_{L^2(\Sigma_{\varepsilon^2,\varepsilon}^T;\delta)},
\end{equation}
where $C$ depends on $d$ and $\eta$.
\end{lemma}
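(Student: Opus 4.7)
The argument rests on two observations. First, $K_\varepsilon$ acts only in the spatial variable, using a kernel $\eta_\varepsilon$ supported in $B(0,\varepsilon/2)$, so the time slot is inert throughout. Second, Lemma \ref{lemma:2.2} delivers a local comparability: for $(z,t)\in \mathrm{supp}(f)\subset \Sigma^T_{\varepsilon^2,\varepsilon}$ one has $\delta(z,t)\ge \varepsilon$, while $|x-z|\le \varepsilon/2$ forces $|\delta(x,t)-\delta(z,t)|\le \varepsilon/2\le \tfrac12\delta(z,t)$. Hence $\omega(x,t)\le C\omega(z,t)$ and $\omega(z,t)\le C\omega(x,t)$ for $\omega = \delta^{\pm 1}$. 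This comparability is what lets the weight cross the convolution.

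For \eqref{pri:2.8}, I would apply Cauchy--Schwarz in $z$ (using $\int\eta_\varepsilon=1$) to get $|K_\varepsilon(f)(x,t)|^2\le \int \eta_\varepsilon(x-z)|f(z,t)|^2\,dz$. Multiplying by $\omega(x,t)$, integrating over $\Sigma^T_{\varepsilon^2,\varepsilon}$ and using Fubini reduces matters to bounding $\int \eta_\varepsilon(x-z)\omega(x,t)\,dx$, which by the comparability above is $\le C\omega(z,t)$. For \eqref{pri:2.9} I would observe that $|\nabla_x\eta_\varepsilon|\le \varepsilon^{-1}\tilde\eta_\varepsilon$ with $\tilde\eta_\varepsilon(x)=\varepsilon^{-d}|\nabla\eta|(x/\varepsilon)$ another smooth $L^1$-kernel of scale $\varepsilon$ supported in $B(0,\varepsilon/2)$, and repeat the computation with $\eta_\varepsilon$ replaced by $\tilde\eta_\varepsilon$, picking up the extra $\varepsilon^{-1}$.

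For \eqref{pri:2.10} the plan is to mimic the Plancherel proof of \eqref{pri:2.13} in the weighted setting via the fundamental theorem of calculus. Writing
\[
K_\varepsilon(f)(x,t)-f(x,t)=\int \eta_\varepsilon(-w)\bigl(f(x+w,t)-f(x,t)\bigr)dw=\int_0^1\!\!\int \eta_\varepsilon(-w)\,w\cdot\nabla f(x+sw,t)\,dw\,ds,
\]
Cauchy--Schwarz in $(w,s)$ together with $|w|\le \varepsilon/2$ gives
\[
|K_\varepsilon(f)-f|^2(x,t)\le C\varepsilon^{2}\int_0^1\!\!\int\eta_\varepsilon(-w)\,|\nabla f(x+sw,t)|^2\,dw\,ds.
\]
I then multiply by $\delta(x,t)$, integrate over $\Sigma^T_{4\varepsilon^2,2\varepsilon}$, swap integration, and set $x'=x+sw$. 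Since $\nabla f$ vanishes a.e.\ off $\Sigma^T_{\varepsilon^2,\varepsilon}$ and $|sw|\le \varepsilon/2$, one has $x'\in \Sigma^T_{\varepsilon^2,\varepsilon}$ and $\delta(x,t)=\delta(x'-sw,t)\le C\delta(x',t)$ by Lemma \ref{lemma:2.2}, producing the weighted right-hand side of \eqref{pri:2.10}. The chief technical subtlety, and the only place where care is required, lies in precisely this bookkeeping of supports under the translation $x\mapsto x+sw$: the inflation of the integration region from $\Sigma^T_{4\varepsilon^2,2\varepsilon}$ on the left to $\Sigma^T_{\varepsilon^2,\varepsilon}$ on the right is exactly the $\varepsilon/2$ buffer the kernel affords, and this buffer is what prevents the weight $\delta$ from collapsing near $S_T$ or the time caps.
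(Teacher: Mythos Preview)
Your proposal is correct and matches the paper's approach in substance: the mechanism in both cases is the $1$-Lipschitz bound $|\delta(x,t)-\delta(z,t)|\le|x-z|$ from Lemma~\ref{lemma:2.2}, which on $\mathrm{supp}(f)\subset\Sigma^T_{\varepsilon^2,\varepsilon}$ yields the two-sided comparability of $\delta^{\pm1}$ across the $\varepsilon/2$-kernel, followed by Cauchy--Schwarz/Fubini for \eqref{pri:2.8}--\eqref{pri:2.9} and the fundamental-theorem-of-calculus representation for \eqref{pri:2.10}. The only organizational difference is that the paper freezes $t$ and invokes the purely spatial weighted inequalities of \cite[Lemmas~3.2--3.3]{QX2} slice by slice (which are proved by exactly your computation), then integrates in $t$, whereas you carry out the argument directly on $\Omega_T$; the content is the same.
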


\begin{proof}
We only show the proof in the case of $\omega =\delta$, and the other case follows from the same way.
To show $\eqref{pri:2.8}$, we fix $t\in [4\varepsilon^2,T-4\varepsilon^2]$, and it follows
from \cite[Lemma 3.2]{QX2} that
\begin{equation}
\int_{\Sigma_{2\varepsilon}} \big|K_\varepsilon(f)(x,t)\big|^2\delta(x,t) dx
\leq C\int_{\Sigma_{2\varepsilon}} \big|f(x,t)\big|^2\delta(x,t) dx,
\end{equation}
where $C$ is independent of $t$, and integrating the above inequality with respect to $t$ from $4\varepsilon^2$ to
$T-4\varepsilon^2$ leads to the desired estimate $\eqref{pri:2.8}$. Denote
\begin{equation*}
 \tilde{K}_{i,\varepsilon}(\delta)(x,t)
 = \int_{\mathbb{R}^d}|\nabla_i\eta_\varepsilon(x-y)|\delta(y,t)dy,
\end{equation*}
where $i=1,\cdots,d$. Then an argument similar to the one used in Lemma $\ref{lemma:2.2}$ shows that
\begin{equation}\label{f:2.8}
\big|\tilde{K}_{i,\varepsilon}(\delta)(x,t)\big|\leq C\delta(x,t),
\end{equation}
since for fixed $t$, there still holds
\begin{equation*}
  |\delta(x,t) -\delta(y,t)|\leq |x-y|.
\end{equation*}
Hence, the estimate $\eqref{f:2.8}$ shall lead to the estimate $\eqref{pri:2.9}$.
The proof will not be fully included here, and the similar details will be found in Lemma $\ref{lemma:2.1}$.

By the same token, for any fixed $t\in [\varepsilon^2,T-\varepsilon^2]$, it follows from
\cite[Lemma 3.3]{QX2} that
\begin{equation*}
\int_{\Sigma_{2\varepsilon}}\big|f(x,t)-K_\varepsilon(f)(x,t)\big|^2\delta(x,t)dxdt
\leq C\varepsilon \int_{\Sigma_{\varepsilon}}\big|\nabla f(x,t)\big|^2\delta(x,t)dxdt,
\end{equation*}
where $C$ is independent of $t$, and this implies $\eqref{pri:2.10}$ is true. We have
completed the proof.
\end{proof}

\begin{lemma}[Weighted-type inequality I]\label{lemma:2.1}
Let $f\in L^2(\Omega_T)$ be supported in $\Sigma_{\varepsilon^2,\varepsilon}^{T}$. Then
for any $\varpi\in L_{per}^2(Y)$, there holds
\begin{equation}\label{pri:2.5}
\begin{aligned}
\bigg(\int_{\varepsilon^2}^{T-\varepsilon^2}
\int_{\Sigma_{\varepsilon}}\big|\varpi(x/\varepsilon,t/\varepsilon^2)
&\nabla S_\varepsilon(f)(x,t)\big|^2
\big[\delta(x,t)\big]^{\pm1}dxdt\bigg)^{1/2} \\
&\leq C\varepsilon^{-1}\|\varpi\|_{L^2(Y)}
\bigg(\int_{\varepsilon^2}^{T-\varepsilon^2}
\int_{\Sigma_{\varepsilon}}\big|f(x,t)\big|^2\big[\delta(x,t)\big]^{\pm1}dxdt\bigg)^{1/2}
\end{aligned}
\end{equation}
and
\begin{equation}\label{pri:2.6}
\begin{aligned}
\bigg(\int_{\varepsilon^2}^{T-\varepsilon^2}
\int_{\Sigma_{\varepsilon}}\big|\varpi(x/\varepsilon,t/\varepsilon^2)
&\partial_t S_\varepsilon(f)(x,t)\big|^2
\big[\delta(x,t)\big]^{\pm1}dxdt\bigg)^{1/2} \\
&\leq C\varepsilon^{-2}\|\varpi\|_{L^2(Y)}
\bigg(\int_{\varepsilon^2}^{T-\varepsilon^2}
\int_{\Sigma_{\varepsilon}}\big|f(x,t)\big|^2
\big[\delta(x,t)\big]^{\pm 1}dxdt\bigg)^{1/2},
\end{aligned}
\end{equation}
where $C$ depends only on $d$ and $\zeta$.
\end{lemma}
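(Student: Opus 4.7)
The plan is to reduce each of $\eqref{pri:2.5}$ and $\eqref{pri:2.6}$ to the unweighted Steklov-type bounds $\eqref{pri:2.1}$ by transferring the weight $\delta^{\pm 1}$ from the evaluation point $(x,t)$ of $S_\varepsilon(f)$ back to the integration point $(y,s)$ of $f$. The key ingredient, already isolated in Lemma $\ref{lemma:2.2}$, is that for $(x,t)\in\Sigma_{\varepsilon^2,\varepsilon}^T$ and $(y,s)\in P(X,\varepsilon/2)$, the inequality $|\delta(X)-\delta(Y)|\leq \varepsilon\leq \delta(X)$ yields
\begin{equation*}
\tfrac12 \delta(y,s)\leq \delta(x,t)\leq 2\delta(y,s),
\end{equation*}
so $\delta^{\pm 1}(x,t)$ is pointwise comparable to $\delta^{\pm 1}(y,s)$ on the support of $\nabla_i\zeta_\varepsilon(x-y,t-s)$ and of $\partial_t\zeta_\varepsilon(x-y,t-s)$.

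For $\eqref{pri:2.5}$, I would start from
\begin{equation*}
\nabla_i S_\varepsilon(f)(x,t)=\int_{\mathbb{R}^{d+1}} \nabla_i\zeta_\varepsilon(x-y,t-s)\,f(y,s)\,dyds,
\end{equation*}
apply Cauchy--Schwarz in the measure $|\nabla_i\zeta_\varepsilon(x-y,t-s)|\,dyds$, and use the trivial bound $\|\nabla_i\zeta_\varepsilon\|_{L^1(\mathbb{R}^{d+1})}\leq C\varepsilon^{-1}$ to arrive at
\begin{equation*}
|\nabla_i S_\varepsilon(f)(x,t)|^2\leq C\varepsilon^{-1}\int |\nabla_i\zeta_\varepsilon(x-y,t-s)|\,|f(y,s)|^2\,dyds.
\end{equation*}
I would then multiply by $|\varpi(x/\varepsilon,t/\varepsilon^2)|^2\delta(x,t)^{\pm 1}$, replace $\delta(x,t)^{\pm 1}$ by $C\delta(y,s)^{\pm 1}$ via the comparability above, and swap the order of integration. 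The remaining inner $(x,t)$-integral
\begin{equation*}
\int |\varpi(x/\varepsilon,t/\varepsilon^2)|^2\,|\nabla_i\zeta_\varepsilon(x-y,t-s)|\,dxdt
\end{equation*}
is bounded by $C\varepsilon^{-1}\|\varpi\|_{L^2(Y)}^2$ after the parabolic rescaling $u=(x-y)/\varepsilon$, $\tau=(t-s)/\varepsilon^2$ combined with the $\mathbb{Z}^{d+1}$-periodicity of $\varpi$, exactly in the spirit of $\eqref{pri:2.1}$. Collecting the two $\varepsilon^{-1}$'s gives $C\varepsilon^{-2}\|\varpi\|_{L^2(Y)}^2\|f\|^2_{L^2(\Sigma_{\varepsilon^2,\varepsilon}^T;\delta^{\pm 1})}$, and the square root is $\eqref{pri:2.5}$.

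The estimate $\eqref{pri:2.6}$ follows by the very same scheme: replace $\nabla_i$ by $\partial_t$ and the bound $\|\nabla_i\zeta_\varepsilon\|_{L^1}\leq C\varepsilon^{-1}$ by $\|\partial_t\zeta_\varepsilon\|_{L^1}\leq C\varepsilon^{-2}$. This produces one extra factor of $\varepsilon^{-1}$ at the Cauchy--Schwarz step and another at the change-of-variables step, delivering the asserted $\varepsilon^{-2}$ prefactor.

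The only genuine obstacle is the handling of the singular weight $\delta^{-1}$, which is a priori unbounded. What rescues the argument is that $f$ is supported in $\Sigma_{\varepsilon^2,\varepsilon}^T$, so $\delta\geq \varepsilon$ throughout the region of interest; consequently the $O(\varepsilon)$-displacement of $\delta$ along the support of the convolution kernel is only a harmless multiplicative perturbation, and the estimate reduces to a weighted analogue of the $L^2$-boundedness of $\nabla S_\varepsilon$ and $\partial_t S_\varepsilon$ against oscillating periodic factors, driven by the same averaging trick that underlies $\eqref{pri:2.1}$.
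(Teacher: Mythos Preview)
Your argument is correct and follows essentially the same route as the paper: both proofs hinge on the fact, furnished by Lemma~\ref{lemma:2.2}, that $\delta$ varies by at most $\varepsilon$ across the support of the convolution kernel, so the weight can be transferred between $(x,t)$ and $(y,s)$, after which Fubini and the periodic-averaging trick behind \eqref{pri:2.1} finish the job. The only cosmetic difference is that the paper inserts the weight directly into the Cauchy--Schwarz splitting (producing the factor $S_{i,\varepsilon}(\delta^{\mp1})$ and invoking \eqref{pri:2.4}), whereas you apply unweighted Cauchy--Schwarz first and move the weight afterwards via the pointwise comparability $\tfrac12\delta(y,s)\le\delta(x,t)\le 2\delta(y,s)$; these are equivalent packagings of the same estimate.
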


\begin{proof}
Concerning the estimates $\eqref{pri:2.5}$ and $\eqref{pri:2.6}$,
the main idea has already been in \cite[Lemma 3.2]{QX2}, and we provide a proof for the sake of
the completeness. For any $(x,t)\in \Sigma_{\varepsilon^2,\varepsilon}^T$ and
$i=1,\cdots,d$, it follows
from Cauchy's inequality that
\begin{equation}\label{f:2.2}
\begin{aligned}
\Big|\nabla_i\int_{\mathbb{R}^{d+1}}
&\zeta_\varepsilon(x-z,t-s)f(z,s)dzds\Big|^2
\leq \varepsilon^{-2}
\bigg(\int_{\mathbb{R}^{d+1}}
\big|\nabla_i\zeta_\varepsilon(x-z,t-s)\big||f(z,s)|dzds\bigg)^{2} \\
&\leq \varepsilon^{-2}
S_{i,\varepsilon}(\delta^{\mp1})(x,t)
\int_{\mathbb{R}^{d+1}}
\big|\nabla_i\zeta_\varepsilon(x-z,t-s)\big||f(z,s)|^2[\delta(z,s)]^{\pm1}dzds\\
&\leq C\varepsilon^{-2}\big[\delta(x,t)\big]^{\mp1}
\int_{\mathbb{R}^{d+1}}
\big|\nabla_i\zeta_\varepsilon(x-z,t-s)\big||f(z,s)|^2[\delta(z,s)]^{\pm1}dzds,
\end{aligned}
\end{equation}
where we employ the estimate $\eqref{pri:2.4}$ in the last step.
To estimate $\eqref{pri:2.5}$, it suffices to show
\begin{equation}
\begin{aligned}
&\int_{\varepsilon^2}^{T-\varepsilon^2}
\int_{\Sigma_{\varepsilon}}\big|\varpi(x/\varepsilon,t/\varepsilon^2)
\nabla_i S_\varepsilon(f)(x,t)\big|^2
[\delta(x,t)]^{\pm1}dxdt \\
&\leq C\varepsilon^{-2}
\int_{\varepsilon^2}^{T-\varepsilon^2}
\int_{\Sigma_{\varepsilon}}
\big|\varpi(x/\varepsilon,t/\varepsilon^2)\big|^2
\int_{\mathbb{R}^{d+1}}
\big|\nabla_i\zeta_\varepsilon(x-z,t-s)\big||f(z,s)|^2[\delta(z,s)]^{\pm1}dzds dxdt\\
&\leq C\varepsilon^{-2}
\sup_{(x,t)\in\mathbb{R}^{d+1}}\int_{Q((x,t),1/2)}
\big|\varpi(z,s)\big|^2 dzds
\int_{\varepsilon^2}^{T-\varepsilon^2}
\int_{\Sigma_{\varepsilon}}
|f(x,t)|^2[\delta(x,t)]^{\pm1}dxdt\\
& \leq C\varepsilon^{-2}
\big\|\varpi\big\|_{L^2(Y)}^2
\int_{\varepsilon^2}^{T-\varepsilon^2}
\int_{\Sigma_{\varepsilon}}
|f(y,\tau)|^2[\delta(x,t)]^{\pm1}dxdt,
\end{aligned}
\end{equation}
where we use the estimate $\eqref{f:2.2}$ in the first inequality, and
the second one follows from the Fubini theorem, and the last step is due to $\varpi\in L^2_{per}(Y)$. Here the symbol $Q((x,t),1/2)$ means a cube in $\mathbb{R}^{d+1}$ with
$(x,t)$ being the center, with $1$ as the length of a side.

Adopting the same procedure as we did above, one can derive the estimate $\eqref{pri:2.6}$ without any real difficulty, which is based on the fact
\begin{equation}
\partial_t\int_{\mathbb{R}^{d+1}}
\zeta_\varepsilon(x-z,t-s)f(z,s)dzds
\leq \varepsilon^{-2}
\int_{\mathbb{R}^{d+1}}
\big|\partial_t\zeta_\varepsilon(x-z,t-s)\big||f(z,s)|dzds,
\end{equation}
and the estimate $\eqref{pri:2.4}$ in the case of $i=d+1$. Thus we may end the proof here.
\end{proof}

\begin{lemma}[Weighted-type inequality II]\label{lemma:2.6}
Let $f\in W_2^{2,1}(\Omega_T)$ be supported in $\Sigma_{\varepsilon^2,\varepsilon}^T$.
Then we have
\begin{equation}\label{pri:2.12}
\begin{aligned}
&\bigg(\int_{2\varepsilon^2}^{T-2\varepsilon^2}
\int_{\Sigma_{2\varepsilon}}
\big|f(x,t)
- S_\varepsilon(f)(x,t)\big|^2
\delta(x,t)dxdt\bigg)^{1/2} \\
&\leq C\varepsilon
\bigg(\int_{\varepsilon^2}^{T-\varepsilon^2}
\int_{\Sigma_{\varepsilon}}
\big|\nabla f\big|^2
\delta(x,t)dxdt\bigg)^{1/2}
+ C\varepsilon^2\bigg(\int_{\varepsilon^2}^{T-\varepsilon^2}
\int_{\Sigma_{\varepsilon}}
\big|\partial_t f\big|^2
\delta(x,t)dxdt\bigg)^{1/2},
\end{aligned}
\end{equation}
where $C$ depends on $d$ and $\zeta$.
\end{lemma}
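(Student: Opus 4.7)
The overall strategy is to adapt the proof of the non-weighted estimate \eqref{pri:2.14} in Lemma \ref{lemma:2.4}, but since the Plancherel argument used there is incompatible with a weight, I would instead rely on a direct real-variable identity combined with the Lipschitz-type comparability of $\delta$ given by \eqref{pri:2.11}. The support hypothesis $\mathrm{supp}(f)\subset \Sigma_{\varepsilon^2,\varepsilon}^T$ will be crucial: whenever $\nabla f$ or $\partial_t f$ is evaluated at a translate of $(x,t)$ by a parabolic distance $\leq \varepsilon$, the weight $\delta$ at the translated point is still comparable to $\delta(x,t)$.

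Since $\int \zeta=1$, I would write, for $(x,t)\in \Sigma_{2\varepsilon^2,2\varepsilon}^T$,
\[
f(x,t)-S_\varepsilon(f)(x,t) = \int_{\mathbb{R}^{d+1}}\zeta_\varepsilon(y,\tau)\bigl[f(x,t)-f(x-y,t-\tau)\bigr]\,dy\,d\tau,
\]
and split $f(x,t)-f(x-y,t-\tau)=[f(x,t)-f(x-y,t)]+[f(x-y,t)-f(x-y,t-\tau)]$. Each bracketed difference is rewritten as a one-dimensional integral of $\nabla f$ (respectively $\partial_t f$) along a straight segment, picking up factors $|y|\leq \varepsilon/2$ (respectively $|\tau|\leq \varepsilon^2/4$) from the support of $\zeta_\varepsilon$. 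Applying Cauchy--Schwarz against $|\zeta_\varepsilon|$ in the outer integral (and Jensen's inequality in the segment parameter $r\in[0,1]$) yields the pointwise bound
\[
\bigl|f(x,t)-S_\varepsilon(f)(x,t)\bigr|^2
\leq C\varepsilon^2\!\int_0^1\!\!\int |\zeta_\varepsilon(y,\tau)|\,|\nabla f(x-ry,t)|^2\,dy\,d\tau\,dr
+ C\varepsilon^4\!\int_0^1\!\!\int |\zeta_\varepsilon(y,\tau)|\,|\partial_t f(x-y,t-r\tau)|^2\,dy\,d\tau\,dr.
\]

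Next I would multiply by $\delta(x,t)$ and integrate over $\Sigma_{2\varepsilon^2,2\varepsilon}^T$. Here is the step I expect to be the most delicate: transferring the weight $\delta(x,t)$ to the translated argument. Observe that $f$ is supported in $\Sigma_{\varepsilon^2,\varepsilon}^T$, so in the first integrand the factor $\nabla f(x-ry,t)$ vanishes unless $\delta(x-ry,t)\geq \varepsilon$; combined with the Lipschitz-type bound \eqref{pri:2.11}, $|\delta(x,t)-\delta(x-ry,t)|\leq |ry|\leq \varepsilon/2$, this forces $\delta(x,t)\leq (3/2)\delta(x-ry,t)$. The analogous bound holds for the second integrand with the translation $(x-y,t-r\tau)$, since $\mathrm{d}((x,t),(x-y,t-r\tau))\leq \varepsilon$. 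Thus $\delta(x,t)$ can be replaced by a constant multiple of the weight at the translated point at no cost.

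Finally I would swap the order of integration via Fubini and change variables $x\mapsto x-ry$ (and $t\mapsto t-r\tau$ for the second term), which is a pure translation with unit Jacobian for each fixed $(r,y,\tau)$. The translated domain of integration lies in $\Sigma_{\varepsilon^2,\varepsilon}^T$, giving
\[
\int_{\Sigma_{2\varepsilon^2,2\varepsilon}^T}\!\!\delta(x,t)|f-S_\varepsilon f|^2 dxdt
\leq C\varepsilon^2\Bigl(\!\int_0^1\!\!\!\int\!|\zeta_\varepsilon|dyd\tau dr\Bigr)\!\!\int_{\Sigma_{\varepsilon^2,\varepsilon}^T}\!\!\delta|\nabla f|^2 + C\varepsilon^4\Bigl(\!\int_0^1\!\!\!\int\!|\zeta_\varepsilon|dyd\tau dr\Bigr)\!\!\int_{\Sigma_{\varepsilon^2,\varepsilon}^T}\!\!\delta|\partial_t f|^2,
\]
and since $\int|\zeta_\varepsilon|=\int|\zeta|=O(1)$ uniformly in $\varepsilon$, taking square roots produces exactly \eqref{pri:2.12}. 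The crux of the argument, and the only point requiring genuine care, is the weight-transfer step above, where the support assumption on $f$ is used in tandem with \eqref{pri:2.11} to convert $\delta(x,t)$ into $\delta$ at the translated arguments.
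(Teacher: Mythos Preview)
Your proposal is correct and follows essentially the same approach as the paper: both arguments write $f-S_\varepsilon f$ via the mean-value identity, split the increment into spatial and temporal parts, apply the fundamental theorem of calculus along segments, and transfer the weight $\delta$ across a parabolic translation of size $\leq\varepsilon$ using the Lipschitz bound \eqref{pri:2.11} together with the support hypothesis $\delta\geq\varepsilon$ on $\mathrm{supp}(f)$. The only cosmetic difference is that the paper performs the change of variables first and then bounds $|\delta(z-(s-1)\varepsilon y,t)-\delta(z,t)|\leq\varepsilon\leq\delta(z,t)$, whereas you transfer the weight before changing variables; the substance is identical.
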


\begin{proof}
Unlike the method used in the proof of Lemma $\ref{lemma:2.4}$, we will employ
the arguments similar to that shown in \cite[Lemma 3.3]{QX2}
to prove this lemma. Let $|y|\leq 1$
and $0<\tau\leq 1$, and it is not hard to see that
\begin{equation*}
\begin{aligned}
\Big|f(x,t)-f(x-\varepsilon y,t-\varepsilon^2\tau)\Big|^2
&\leq \varepsilon^2\int_0^1|\nabla f(x+(s-1)\varepsilon y,t)|^2 ds \\
&+ \varepsilon^4
\int_0^1|\partial_t f(x-\varepsilon y,t+(\theta-1)\varepsilon^2\tau)|^2 d\theta.
\end{aligned}
\end{equation*}
Then we have
\begin{equation}\label{f:2.5}
\begin{aligned}
&\int_{2\varepsilon^2}^{T-2\varepsilon^2}
\int_{\Sigma_{2\varepsilon}}\big|f(x,t)-f(x-\varepsilon y,t-\varepsilon^2\tau)\big|^2 \delta(x,t)dxdt\\
&\leq \varepsilon^2\underbrace{\int_{2\varepsilon^2}^{T-2\varepsilon^2}
\int_{\Sigma_{2\varepsilon}}\int_0^1 \big|\nabla f(x+(s-1)\varepsilon y,t)
\big|^2\delta(x,t)dsdxdt}_{I_1} \\
& + \varepsilon^4
\underbrace{\int_{2\varepsilon^2}^{T-2\varepsilon^2}
\int_{\Sigma_{2\varepsilon}}
\int_0^1\big|\partial_t f(x+\varepsilon y,t+(\theta-1)\varepsilon^2\tau)\big|^2\delta(x,t)d\theta dxdt}_{I_2}.
\end{aligned}
\end{equation}
For $I_1$, set $z=x+(s-1)\varepsilon y$. It is clear to see $z\in \Sigma_{\varepsilon}$, and
\begin{equation}\label{f:2.6}
\begin{aligned}
I_1 &= \int_{2\varepsilon^2}^{T-2\varepsilon^2}
\int_{\Sigma_{\varepsilon}}
\int_0^1\big|\nabla f(z,t)\big|^2\delta(z-(s-1)\varepsilon y,t)ds dz dt\\
&\leq  2\int_{\varepsilon^2}^{T-\varepsilon^2}
\int_{\Sigma_{\varepsilon}}
\big|\nabla f(z,t)\big|^2\delta(z,t)dzdt,
\end{aligned}
\end{equation}
due to the fact that $|\delta(z-(s-1)\varepsilon y,t)-\delta(z,t)|\leq \varepsilon\leq
\delta(z,t)$ for any $t\in [2\varepsilon^2,T-2\varepsilon^2]$. Concerning $I_2$, we
reset $z=x-\varepsilon y$ and $\lambda = t+(\theta-1)\varepsilon^2\tau$.
Thus $z\in\Sigma_{\varepsilon}$ and $\lambda\in[\varepsilon^2,T-\varepsilon^2]$,
and it follows from the estimate $\eqref{pri:2.11}$ that
\begin{equation*}
\Big|\delta(z+\varepsilon y,\lambda-(\theta-1)\varepsilon^2\tau)-
\delta(z,\lambda)\Big|\leq \varepsilon \leq \delta(z,\lambda).
\end{equation*}
By the same token, we have
\begin{equation}\label{f:2.7}
I_2 \leq 2\int_{\varepsilon^2}^{T-\varepsilon^2}\int_{\Sigma_{\varepsilon}}
\big|\partial_t f(z,t)\big|^2\delta(z,t) dzdt.
\end{equation}
Inserting $\eqref{f:2.6}$ and $\eqref{f:2.7}$ into the estimate $\eqref{f:2.5}$ leads to
\begin{equation*}
\begin{aligned}
&\int_{2\varepsilon^2}^{T-2\varepsilon^2}
\int_{\Sigma_{2\varepsilon}}\big|f(x,t)-f(x-\varepsilon y,t-\varepsilon^2\tau)\big|^2 \delta(x,t)dxdt \\
&\leq 2\varepsilon^2
\int_{\varepsilon^2}^{T-\varepsilon^2}
\int_{\Sigma_{\varepsilon}}
\big|\nabla f(z,t)\big|^2\delta(z,t)dzdt
+ 2\varepsilon^4
\int_{\varepsilon^2}^{T-\varepsilon^2}
\int_{\Sigma_{\varepsilon}}
\big|\partial_t f(z,t)\big|^2\delta(z,t)dzdt.
\end{aligned}
\end{equation*}
Then the rest part of the proof of $\eqref{pri:2.12}$ is based upon the Fubini theorem and
the estimate $\eqref{pri:2.4}$,
which will be found in \cite[Lemma 3.3]{QX2} and will not be reproduced here. We have completed
the whole proof.
\end{proof}

\begin{corollary}\label{cor:2.1}
Assume $f\in W_2^{2,1}(\mathbb{R}^{d+1})$. Then there holds
\begin{equation}\label{pri:2.15}
\big\|\nabla f - \nabla S_\varepsilon(f)\big\|_{L^2(\mathbb{R}^{d+1})}
\leq C\varepsilon\Big\{\big\|\nabla^2 f\big\|_{L^2(\mathbb{R}^{d+1})}
+ \big\|\partial_t f\big\|_{L^2(\mathbb{R}^{d+1})}\Big\}.
\end{equation}
Moreover, if $f$ is supported in $\Sigma_{\varepsilon^2,\varepsilon}^T$, then we have
\begin{equation}\label{pri:2.16}
\big\|\nabla f - \nabla S_\varepsilon(f)\big\|_{L^2(\Sigma_{4\varepsilon^2,2\varepsilon}^T;\delta)}
\leq C\varepsilon\Big\{\big\|\nabla^2 f\big\|_{L^2(\Sigma_{\varepsilon,\varepsilon}^T;\delta)}
+ \big\|\partial_t f\big\|_{L^2(\Sigma_{\varepsilon^2,\varepsilon}^T;\delta)}\Big\}.
\end{equation}
where $C$ depends on $\zeta,\eta$ and $d$.
\end{corollary}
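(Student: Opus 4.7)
The corollary consists of an unweighted estimate~\eqref{pri:2.15} and a weighted one~\eqref{pri:2.16}; I would handle them by different techniques. For~\eqref{pri:2.15}, Plancherel analysis in the spirit of Lemma~\ref{lemma:2.4} is available. For~\eqref{pri:2.16} the weight $\delta$ rules out a direct Fourier proof, so I would use a real-space decomposition whose decisive step is an integration by parts in the spatial variable that trades one $\nabla$ on $f$ for a factor $\varepsilon^{-1}$ on the kernel; that $\varepsilon^{-1}$ is subsequently paid back by the factor $|s|\le C\varepsilon^{2}$ coming from the parabolic support of $\zeta_{\varepsilon}$. This roundabout route is forced on us because simply applying Lemma~\ref{lemma:2.4} or Lemma~\ref{lemma:2.6} to $\nabla f$ would produce $\|\partial_{t}\nabla f\|$ on the right, which is strictly stronger than the $\|\partial_{t}f\|$ promised by the corollary.

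\textbf{Proof of~\eqref{pri:2.15}.} Because $S_\varepsilon$ is a convolution, $\nabla S_\varepsilon f=S_\varepsilon(\nabla f)$ and hence $\nabla f-\nabla S_\varepsilon f=\nabla(f-S_\varepsilon f)$. Plancherel then gives
\begin{equation*}
\|\nabla f-\nabla S_\varepsilon f\|_{L^{2}(\mathbb{R}^{d+1})}^{2}=\int |\xi|^{2}\,|\widehat{f}(\xi,\rho)|^{2}\,|1-\widehat{\zeta}(\varepsilon\xi,\varepsilon^{2}\rho)|^{2}\,d\xi\,d\rho,
\end{equation*}
and the remaining task is the symbol bound
\begin{equation*}
|\xi|^{2}\,|1-\widehat{\zeta}(\varepsilon\xi,\varepsilon^{2}\rho)|^{2}\le C\varepsilon^{2}\bigl(|\xi|^{4}+|\rho|^{2}\bigr),
\end{equation*}
which I would verify by a three-case split on $(\varepsilon\xi,\varepsilon^{2}\rho)$. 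On $\{\varepsilon|\xi|\le 1,\ \varepsilon^{2}|\rho|\le 1\}$ the Lipschitz Taylor estimate $|1-\widehat{\zeta}(u,v)|\le C(|u|+|v|)$ (recall $\widehat{\zeta}(0,0)=1$) yields $|\xi|^{2}|1-\widehat{\zeta}|^{2}\le C\varepsilon^{2}|\xi|^{4}+C\varepsilon^{4}|\xi|^{2}|\rho|^{2}\le C\varepsilon^{2}(|\xi|^{4}+|\rho|^{2})$ after using $\varepsilon^{2}|\xi|^{2}\le 1$. On $\{\varepsilon|\xi|>1\}$ the bound $|1-\widehat{\zeta}|\le 2$ combines with $|\xi|^{2}\le\varepsilon^{2}|\xi|^{4}$. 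On $\{\varepsilon^{2}|\rho|>1,\ \varepsilon|\xi|\le 1\}$ the constraint upgrades $|\xi|^{2}\le\varepsilon^{-2}$ to $|\xi|^{2}\le\varepsilon^{2}|\rho|^{2}$. Plugging the symbol bound back into Plancherel gives~\eqref{pri:2.15}.

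\textbf{Proof of~\eqref{pri:2.16}.} With $f$ supported in $\Sigma_{\varepsilon^{2},\varepsilon}^{T}$, I would split the defining convolution increment as
\begin{equation*}
\nabla f(x,t)-\nabla f(x-y,t-s)=[\nabla f(x,t)-\nabla f(x-y,t)]+[\nabla f(x-y,t)-\nabla f(x-y,t-s)],
\end{equation*}
producing a decomposition $\nabla f-\nabla S_\varepsilon f=A+B$ after integration against $\zeta_{\varepsilon}(y,s)$. For $A$, the fundamental theorem of calculus in the spatial variable together with the translation/Fubini argument of Lemma~\ref{lemma:2.6} and the weight-translation bound~\eqref{pri:2.11} yields the desired $\nabla^{2}f$-contribution with constant $C\varepsilon$ in the weighted norm. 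For $B$, the key maneuver is to rewrite
\begin{equation*}
\nabla f(x-y,t)-\nabla f(x-y,t-s)=\nabla_{x}\Bigl(\int_{t-s}^{t}\partial_{\tau}f(x-y,\tau)\,d\tau\Bigr)
\end{equation*}
and integrate by parts in $y$ to shift the gradient onto $\zeta_{\varepsilon}$, yielding
\begin{equation*}
B(x,t)=\int\!\!\int(\nabla_{y}\zeta_{\varepsilon})(y,s)\int_{t-s}^{t}\partial_{\tau}f(x-y,\tau)\,d\tau\,dy\,ds.
\end{equation*}
Cauchy--Schwarz on the $\tau$-integral produces a factor $|s|^{1/2}\le C\varepsilon$, and a second Cauchy--Schwarz on $(y,s)$ using $\|\nabla_{y}\zeta_{\varepsilon}\|_{L^{1}}\le C\varepsilon^{-1}$ gives a pointwise bound for $|B(x,t)|^{2}$ as a weighted integral of $|\partial_{\tau}f(z,\tau)|^{2}$ against $|\nabla_{y}\zeta_{\varepsilon}(x-z,s)|$. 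Multiplying through by $\delta(x,t)$, changing variables $z=x-y$, and applying Fubini, the $(x,t,s)$-integral reduces (via~\eqref{pri:2.11}, whose use is licensed by $\delta\ge\varepsilon$ on the support of $f$, so that the parabolic $\le C\varepsilon$-shifts distort $\delta$ only by a factor $2$) to $C\varepsilon\,\delta(z,\tau)$. Combining $A$ and $B$ yields~\eqref{pri:2.16}.

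\textbf{Main obstacle.} The delicate point is the $B$-estimate: a naïve fundamental theorem in time would produce $\partial_{t}\nabla f$ on the right-hand side, a strictly stronger quantity than the promised $\partial_{t}f$, and no $L^{2}$-level rearrangement can recover one from the other. The integration-by-parts in $y$ is the decisive step; after it, the $\varepsilon$-bookkeeping (one $\varepsilon^{-1}$ from $\|\nabla_{y}\zeta_{\varepsilon}\|_{L^{1}}$, one $\varepsilon$ from Cauchy--Schwarz on the $\tau$-integral, and one $\varepsilon^{2}$ from the parabolic support of $s$) must cancel to give exactly the power $\varepsilon$. The weighted version adds the further subtlety that $\delta$ must respect the $\le C\varepsilon$ parabolic shifts built into the kernel, which is handled by~\eqref{pri:2.11} and is legitimate only because the support hypothesis on $f$ guarantees $\delta\ge\varepsilon$ there.
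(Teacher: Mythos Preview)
Your argument is correct but follows a different route from the paper's. For \eqref{pri:2.15} you give the Plancherel symbol-splitting proof, which the paper acknowledges as the existing argument from \cite{GZS}; the paper instead supplies a new, Fourier-free proof by inserting the spatial-only smoothing $K_\varepsilon$ as an intermediary: from \eqref{pri:2.13} one writes $\|g\|_{L^2}\le C\varepsilon\|\nabla g\|_{L^2}+\|K_\varepsilon g\|_{L^2}$ with $g=\nabla f-S_\varepsilon(\nabla f)$, then applies \eqref{pri:2.14} to $K_\varepsilon(\nabla f)$ and uses the commutation $\partial_t K_\varepsilon(\nabla f)=\nabla K_\varepsilon(\partial_t f)$ together with $\|\nabla K_\varepsilon h\|\le C\varepsilon^{-1}\|h\|$ to land on $\partial_t f$ rather than $\partial_t\nabla f$. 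For \eqref{pri:2.16} the paper simply reruns this $K_\varepsilon$-intermediary argument in weighted form using Lemmas \ref{lemma:2.5} and \ref{lemma:2.6}, whereas you build a direct real-space proof with the integration-by-parts in $y$ as the key device. The two strategies are morally the same trick---shift the surplus spatial gradient onto a smoothing kernel and pay $\varepsilon^{-1}$ for it---but packaged differently: the paper's version is unified across \eqref{pri:2.15} and \eqref{pri:2.16} and leverages the lemma toolbox already assembled, while yours is self-contained and makes the $\varepsilon$-bookkeeping fully explicit.
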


\begin{proof}
We mention that the estimate $\eqref{pri:2.15}$ had been proven in \cite[Lemma 3.2]{GZS}
by the Plancherel theorem. Based upon the previous Lemma $\ref{lemma:2.4}$, we provide a new proof here, and this method could be applied to the estimate $\eqref{pri:2.16}$ as well.

In view of $\eqref{pri:2.13}$, it is not hard to see that
\begin{equation*}
 \|f\|_{L^2(\mathbb{R}^{d+1})} \leq C\varepsilon\|\nabla f\|_{L^2(\mathbb{R}^{d+1})}
+ \|K_\varepsilon(f)\|_{L^2(\mathbb{R}^{d+1})}.
\end{equation*}
Hence, from the above inequality, it follows that
\begin{equation*}
\begin{aligned}
\big\|\nabla f-\nabla S_\varepsilon(f)&\big\|_{L^2(\mathbb{R}^{d+1})}
\leq C\varepsilon\big\|\nabla (\nabla f- S_\varepsilon(\nabla f))\big\|_{L^2(\mathbb{R}^{d+1})}
+ \big\|K_\varepsilon(\nabla f)-  S_\varepsilon K_\varepsilon
(\nabla f)\big\|_{L^2(\mathbb{R}^{d+1})}\\
&\leq C\varepsilon\big\|\nabla^2 f\big\|_{L^2(\mathbb{R}^{d+1})}
+ C\varepsilon\big\|\nabla K_\varepsilon(\nabla f)\big\|_{L^2(\mathbb{R}^{d+1})}
+ C\varepsilon^2\big\|\partial_t K_\varepsilon(\nabla f)\big\|_{L^2(\mathbb{R}^{d+1})}\\
& = C\varepsilon\big\|\nabla^2 f\big\|_{L^2(\mathbb{R}^{d+1})}
+ C\varepsilon\big\| K_\varepsilon(\nabla^2 f)\big\|_{L^2(\mathbb{R}^{d+1})}
+ C\varepsilon^2\big\|\nabla K_\varepsilon(\partial_t f)\big\|_{L^2(\mathbb{R}^{d+1})} \\
&\leq C\varepsilon\Big\{\big\|\nabla^2 f\big\|_{L^2(\mathbb{R}^{d+1})}
+ \big\|\partial_t f\big\|_{L^2(\mathbb{R}^{d+1})}\Big\},
\end{aligned}
\end{equation*}
where we use the estimates $\eqref{pri:2.1}$ and $\eqref{pri:2.14}$ in the second inequality,
and the estimate $\eqref{pri:2.17}$ in the last one.

Proceeding as in the proof of the estimate $\eqref{pri:2.15}$, it is not hard to derive
the weighted-type one $\eqref{pri:2.16}$. All the requirements have been established except
the following estimate
\begin{equation*}
\|S_\varepsilon(\nabla^2 f)\|_{L^2(\Sigma_{\varepsilon^2,\varepsilon}^T;\delta)}
\leq C\|\nabla^2 f\|_{L^2(\Sigma_{\varepsilon^2,\varepsilon}^T;\delta)}.
\end{equation*}
However, it is could be easily acquired from Lemma $\ref{lemma:2.2}$, and we omit the details here.
The proof is now complete.
\end{proof}

\begin{remark}
\emph{Although Corollary $\ref{cor:2.1}$ has not been employed in this paper, there
are two reasons making us feel necessary to write it out. One is that we provide an idea
in the proof of $\eqref{pri:2.15}$, which actually suggests a new way for
\cite[Lemma 3.2]{GZS} to avoid using the Fourier transformation method.
The other is that from the proof of this corollary, it is clear to see why we
fix the undetermined function $\varphi$ in $\eqref{eq:3.1}$
by choosing $\varphi = S_\varepsilon K_\varepsilon(\Psi_{[4\varepsilon^2,2\varepsilon]}\nabla u_0)$
rather than
$S_\varepsilon(\Psi_{[4\varepsilon^2,2\varepsilon]}\nabla u_0)$ in the later sections.}
\end{remark}

\section{Convergence rates in $L^2(0,T;(H^1(\Omega))^d)$}\label{section:3}

\begin{lemma}\label{lemma:3.4}
Suppose that $u_\varepsilon,u_0\in L^2(0,T;(H^1(\Omega))^d)$ with
$\partial_t u_\varepsilon,\partial_t u_0\in L^2(0,T;(H^{-1}(\Omega))^{d})$ satisfy
\begin{equation}\label{eq:3.4}
\left\{\begin{aligned}
\frac{\partial u_\varepsilon}{\partial t} + \mathcal{L}_\varepsilon(u_\varepsilon)
& = \frac{\partial u_0}{\partial t} + \mathcal{L}_0(u_0) &\quad&\emph{in}\quad ~\Omega_T,\\
u_\varepsilon & = u_0 &\quad&\emph{on}\quad \partial_p\Omega_T.
\end{aligned}\right.
\end{equation}
Let $w_\varepsilon = (w_\varepsilon^\beta)$ with
\begin{equation}\label{eq:3.1}
w_\varepsilon^\beta = u_\varepsilon^\beta - u_0^\beta
-\varepsilon\chi_j^{\beta\gamma}(x/\varepsilon,t/\varepsilon^2)\varphi_j^\gamma
-\varepsilon^2 E_{l(d+1)j}^{\beta\gamma}(x/\varepsilon,t/\varepsilon^2)
\frac{\partial}{\partial x_l}\big\{\varphi_j^\gamma\big\},
\end{equation}
where $\varphi_j^\gamma\in W^{2,1}_2(\Omega_T)$ is supported in $\Omega_T$. Then we have
\begin{equation}\label{eq:3.2}
\left\{\begin{aligned}
\frac{\partial w_\varepsilon}{\partial t} + \mathcal{L}_\varepsilon(w_\varepsilon)
& = \emph{div}(\tilde{f}) &\quad&\emph{in}\quad ~\Omega_T,\\
w_\varepsilon & = 0 &\quad&\emph{on}\quad \partial_p\Omega_T,
\end{aligned}\right.
\end{equation}
where $\tilde{f} = (\tilde{f}_i^\alpha)$ with
\begin{equation}\label{eq:3.3}
\begin{aligned}
\tilde{f}_i^\alpha
&= \big[a_{ij}^{\alpha\beta}(y,\tau)-\widehat{a}_{ij}^{\alpha\beta}\big]
\Big[\frac{\partial u_0^\beta}{\partial x_j}-\varphi_j^\beta\Big] \\
& + \varepsilon \bigg[ a_{il}^{\alpha\beta}(y,\tau)\chi_j^{\beta\gamma}(y,\tau)
+ E_{lij}^{\alpha\gamma}(y,\tau)
+ a_{ik}^{\alpha\beta}(y,\tau)\frac{\partial E_{l(d+1)j}^{\beta\gamma}}{\partial y_k}(y,\tau)
\bigg]\frac{\partial\varphi_j^\gamma}{\partial x_l} \\
& + \varepsilon^2\Big[
a_{ik}^{\alpha\beta}(y,\tau)E_{l(d+1)j}^{\beta\gamma}(y,\tau)
\frac{\partial^2\varphi_j^\gamma}{\partial x_l\partial x_k}
- E_{i(d+1)j}^{\alpha\gamma}(y,\tau)\frac{\partial\varphi_j^\gamma}{\partial t}\Big],
\end{aligned}
\end{equation}
where $1\leq i,j,l,k\leq d$ and $1\leq \alpha,\beta,\gamma\leq d$,
and $y=x/\varepsilon$ with $\tau=t/\varepsilon^2$.
\end{lemma}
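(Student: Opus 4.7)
The approach is a direct computation. Apply $\partial_t+\mathcal{L}_\varepsilon$ to $\eqref{eq:3.1}$ using the chain rules $\partial_t[\phi(x/\varepsilon,t/\varepsilon^2)]=\varepsilon^{-2}(\partial_\tau\phi)(y,\tau)$ and $\partial_i[\phi(x/\varepsilon,t/\varepsilon^2)]=\varepsilon^{-1}(\partial_{y_i}\phi)(y,\tau)$, and invoke $\eqref{eq:3.4}$ to replace $(\partial_t+\mathcal{L}_\varepsilon)u_\varepsilon$ by $(\partial_t+\mathcal{L}_0)u_0$. The result is
\begin{equation*}
(\partial_t+\mathcal{L}_\varepsilon)(w_\varepsilon)
=\big(\mathcal{L}_0(u_0)-\mathcal{L}_\varepsilon(u_0)\big)
-\varepsilon(\partial_t+\mathcal{L}_\varepsilon)\big(\chi_j^\varepsilon\varphi_j\big)
-\varepsilon^2(\partial_t+\mathcal{L}_\varepsilon)\big(E_{l(d+1)j}^\varepsilon\partial_l\varphi_j\big),
\end{equation*}
where $\chi_j^\varepsilon$ and $E_{l(d+1)j}^\varepsilon$ denote evaluations at $(x/\varepsilon,t/\varepsilon^2)$. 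The first bracket equals $\partial_i[(a_{ij}^{\alpha\beta}(y,\tau)-\hat a_{ij}^{\alpha\beta})\partial_j u_0^\beta]$, and splitting $\partial_j u_0^\beta=\varphi_j^\beta+(\partial_j u_0^\beta-\varphi_j^\beta)$ isolates the first line of $\tilde f$ in $\eqref{eq:3.3}$ from the $(\partial_j u_0-\varphi_j)$ part.

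The remaining $\varphi_j^\beta$ piece combines with the $\varepsilon^{-1}$-order contributions from $-\varepsilon(\partial_t+\mathcal{L}_\varepsilon)(\chi_j^\varepsilon\varphi_j)$: by Lemma $\ref{lemma:2.3}$ the coefficient of $\varphi_j^\gamma$ inside the outer $\partial_i$ collapses to $-b_{ij}^{\alpha\gamma}(y,\tau)$, while the stray $\varepsilon^{-1}(\partial_\tau\chi_j^{\alpha\gamma})\varphi_j^\gamma$ is killed by the divergence identity $\sum_{i=1}^d\partial_{y_i}b_{ij}^{\alpha\gamma}+\partial_\tau\chi_j^{\alpha\gamma}=0$. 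What survives is $-b_{ij}^{\alpha\gamma}(y,\tau)\partial_i\varphi_j^\gamma$; this is recast via $b_{ij}^{\alpha\gamma}=\sum_{k=1}^{d+1}\partial_{y_k}E_{kij}^{\alpha\gamma}$ together with the antisymmetry $E_{kij}=-E_{ikj}$. For $k\le d$, the antisymmetry of $E_{ikj}$ in $(i,k)$ against the symmetry of $\partial_k\partial_i\varphi_j^\gamma$ produces the pure divergence $\varepsilon\partial_i[E_{lij}^{\alpha\gamma}(y,\tau)\partial_l\varphi_j^\gamma]$, which contributes the $E_{lij}$ summand of $\tilde f$. For $k=d+1$ one is left with a stray $(\partial_\tau E_{i(d+1)j})(y,\tau)\partial_i\varphi_j^\gamma$.

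This stray term is exactly what the last summand $-\varepsilon^2 E_{l(d+1)j}^{\beta\gamma}(y,\tau)\partial_l\varphi_j^\gamma$ of $w_\varepsilon$ was designed to absorb: $\partial_t$ acting on it cancels $(\partial_\tau E_{l(d+1)j})\partial_l\varphi_j^\gamma$ and produces $-\varepsilon^2 E_{l(d+1)j}\partial_l\partial_t\varphi_j^\gamma$; integrating by parts in $\partial_l$ on the latter, and using $\sum_{l=1}^d\partial_{y_l}E_{l(d+1)j}^{\alpha\gamma}=\chi_j^{\alpha\gamma}$ (which is the $i=d+1$ case of the divergence identity in Lemma $\ref{lemma:2.3}$), delivers both the $-\varepsilon^2 E_{i(d+1)j}^{\alpha\gamma}\partial_t\varphi_j^\gamma$ summand of $\tilde f$ and the cancellation of the leftover $-\varepsilon\chi_j^{\alpha\gamma}\partial_t\varphi_j^\gamma$ coming from $-\varepsilon\partial_t(\chi_j^\varepsilon\varphi_j)$. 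Applying $\mathcal{L}_\varepsilon$ to the same dual-corrector correction yields the $\varepsilon$-order $a_{ik}^{\alpha\beta}\partial_{y_k}E_{l(d+1)j}^{\beta\gamma}\partial_l\varphi_j^\gamma$ and the $\varepsilon^2$-order $a_{ik}^{\alpha\beta}E_{l(d+1)j}^{\beta\gamma}\partial_{kl}^2\varphi_j^\gamma$ summands of $\tilde f$, while the remaining $\varepsilon$-order $a_{il}^{\alpha\beta}\chi_j^{\beta\gamma}\partial_l\varphi_j^\gamma$ comes from $\mathcal{L}_\varepsilon(\varepsilon\chi_j^\varepsilon\varphi_j)$ when the spatial derivative lands on $\varphi$ rather than on $\chi$. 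The boundary condition $w_\varepsilon=0$ on $\partial_p\Omega_T$ follows from $u_\varepsilon=u_0$ on $\partial_p\Omega_T$ together with the interior support of $\varphi$.

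The main obstacle is purely algebraic bookkeeping: roughly a dozen terms with tangled $(i,j,k,l,\alpha,\beta,\gamma)$ indices must be tracked, and the cancellations succeed only when the corrector cell equation, the divergence identity for $b$, and the antisymmetry of $E$ are invoked at precisely the right steps. Once each residual fragment is matched either to its counterpart in $\tilde f$ or to its pre-subtracted partner in $w_\varepsilon$, the remainder is routine index manipulation.
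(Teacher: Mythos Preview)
Your proposal is correct and follows essentially the same route as the paper's proof: both apply $\partial_t+\mathcal{L}_\varepsilon$ to $w_\varepsilon$, invoke $\eqref{eq:3.4}$, and then use the divergence identity $\sum_{i=1}^{d+1}\partial_{y_i}b_{ij}=0$ from Lemma~$\ref{lemma:2.3}$ together with the representation $b_{ij}=\sum_k\partial_{y_k}E_{kij}$ and the antisymmetry $E_{kij}=-E_{ikj}$ to match every residual fragment to a summand of $\tilde f$. The only differences are organizational: the paper groups the computation into blocks $I_1,I_2,I_3$ and handles $I_1+I_2$ jointly via $\eqref{eq:2.3}$(ii) before invoking $\eqref{eq:2.2}$, whereas you narrate the cancellations term by term; and what you call ``integrating by parts in $\partial_l$'' is really the product rule run backwards at the level of pointwise identities, not an integral identity.
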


\begin{proof}
The proof may be found in \cite[Theorem 2.2]{GZS}, and we provide a proof for the sake of
the completeness. Observing the equation $\eqref{eq:3.4}$, we have
\begin{equation}\label{f:3.25}
\begin{aligned}
\frac{\partial w_\varepsilon}{\partial t} + \mathcal{L}_\varepsilon(w_\varepsilon)
&= \mathcal{L}_0(u_0) - \mathcal{L}_\varepsilon(u_0)
-\Big(\frac{\partial}{\partial t}+\mathcal{L}_\varepsilon\Big)
\big[\varepsilon\chi_j^{\cdot\gamma}(x/\varepsilon,t/\varepsilon^2)\varphi_j^\gamma\big]\\
&-\Big(\frac{\partial}{\partial t}+\mathcal{L}_\varepsilon\Big)
\big[\varepsilon^2E_{l(d+1)j}(x/\varepsilon,t/\varepsilon^2)
\frac{\partial}{\partial x_l}(\varphi_j^\gamma)\big] \\
& = -\underbrace{\frac{\partial}{\partial x_i}
\bigg\{ b_{ij}^{\cdot\beta}(y,\tau)\varphi_j^\beta \bigg\}}_{I_1}
-\underbrace{\frac{\partial}{\partial t}\big[\varepsilon\chi_j^{\cdot\gamma}(y,\tau)\varphi_j^\gamma\big]}_{I_2}
-\underbrace{\Big(\frac{\partial}{\partial t}+\mathcal{L}_\varepsilon\Big)
\big[\varepsilon^2 E_{l(d+1)j}^{\cdot\gamma}(y,\tau)
\frac{\partial\varphi_j^\gamma}{\partial x_l}\big]}_{I_3}\\
&-\frac{\partial}{\partial x_i}\bigg\{[\widehat{a}_{ij}^{\cdot\beta} - a_{ij}^{\cdot\beta}(y,\tau)]\big[\frac{\partial u_0^\beta}{\partial x_j}
- \varphi_j^\gamma\big]
-\varepsilon a_{ik}^{\cdot\beta}(y,\tau)\chi_j^{\beta\gamma}(y,\tau)
\frac{\partial\varphi_j^\gamma}{\partial x_k}\bigg\}
\end{aligned}
\end{equation}
where $b_{ij}^{\cdot\beta} = (b_{ij}^{\alpha\beta})$ is shown as in $\eqref{eq:2.1}$, and
$y=x/\varepsilon$ with $\tau = t/\varepsilon^2$. The last line of $\eqref{f:3.25}$ is
a good term, and our task is reduced to calculate $I_1$, $I_2$ and $I_3$. Recalling
Lemma $\ref{lemma:2.3}$, it is not hard to see that
\begin{equation}\label{f:3.26}
\begin{aligned}
~[I_1]^\alpha + [I_2]^\alpha
= \varepsilon\chi_j^{\alpha\beta}(y,\tau)\frac{\partial\varphi_j^\beta}{\partial t}
+ b_{ij}^{\alpha\beta}(y,\tau)\frac{\partial\varphi_j^\beta}{\partial x_i},
\end{aligned}
\end{equation}
where we use the equality (ii) in $\eqref{eq:2.3}$. Then in view of $\eqref{eq:2.2}$, the right-hand side
of $\eqref{f:3.26}$ may be rewritten by
\begin{equation}\label{f:3.27}
\begin{aligned}
&\qquad\qquad
\varepsilon\frac{\partial}{\partial y_{k^{\prime}}}\big[E_{k^\prime(d+1)j}^{\alpha\beta}\big]
\frac{\partial\varphi_j^\beta}{\partial t}
+ \frac{\partial}{\partial y_{k^{\prime}}}\big[E_{k^\prime ij}^{\alpha\beta}\big]
\frac{\partial\varphi_j^\beta}{\partial x_i}
+ \frac{\partial}{\partial \tau}\big[E_{(d+1)ij}^{\alpha\beta}\big]
\frac{\partial\varphi_j^\beta}{\partial x_i}\\
& = \varepsilon^2\frac{\partial}{\partial x_{k^{\prime}}}\big[E_{k^\prime(d+1)j}^{\alpha\beta}(y,\tau)\big]
\frac{\partial\varphi_j^\beta}{\partial t}
+\varepsilon\frac{\partial}{\partial x_{k^{\prime}}}
\big[E_{k^\prime ij}^{\alpha\beta}(y,\tau)\big]
\frac{\partial\varphi_j^\beta}{\partial x_i}
+ \varepsilon^2\frac{\partial}{\partial t}\big[E_{(d+1)ij}^{\alpha\beta}(y,\tau)\big]
\frac{\partial\varphi_j^\beta}{\partial x_i}
\end{aligned}
\end{equation}
where $k^\prime = 1,\cdots,d$, and we use the fact that $E_{(d+1)(d+1)j} = 0$ due to the
antisymmetry. Again, employing the antisymmetry of $E$, the second line of $\eqref{f:3.27}$
is equal to
\begin{equation*}
\begin{aligned}
\varepsilon^2\frac{\partial}{\partial x_{k^\prime}}\bigg\{E_{k^\prime(d+1)j}^{\alpha\beta}(y,\tau)
\frac{\partial\varphi_j^\beta}{\partial t}\bigg\}
+\varepsilon\frac{\partial}{\partial x_{k^{\prime}}}
\bigg\{E_{k^\prime ij}^{\alpha\beta}(y,\tau)
\frac{\partial\varphi_j^\beta}{\partial x_i}\bigg\}
+\varepsilon^2\frac{\partial}{\partial t}
\bigg\{E_{(d+1)ij}^{\alpha\beta}(y,\tau)\frac{\partial\varphi_j^\beta}{\partial x_i}\bigg\}
\end{aligned}
\end{equation*}
Thus, we have
\begin{equation*}
\begin{aligned}
I_1 + I_2 + I_3
:= \varepsilon^2\frac{\partial}{\partial x_{k^\prime}}\bigg\{E_{k^\prime(d+1)j}^{\cdot\beta}(y,\tau)
\frac{\partial\varphi_j^\beta}{\partial t}\bigg\}
+\varepsilon\frac{\partial}{\partial x_{k^{\prime}}}
\bigg\{E_{k^\prime ij}^{\cdot\beta}(y,\tau)
\frac{\partial\varphi_j^\beta}{\partial x_i}\bigg\}
+\varepsilon^2\mathcal{L}_\varepsilon\big[E_{l(d+1)j}^{\cdot\gamma}(y,\tau)
\frac{\partial\varphi_j^\gamma}{\partial x_l}\big]
\end{aligned}
\end{equation*}
and this together with the last line of $\eqref{f:3.25}$ implies the desired formula
$\eqref{eq:3.3}$. The proof is complete.
\end{proof}

\begin{thm}\label{thm:3.1}
Given $F\in (L^2(\Omega_T))^d$, $g\in ({_0H^{1,\frac{1}{2}}}(S_T))^d$
and $h\in (H^1_0(\Omega))^d$.
Let $u_\varepsilon,u_0\in L^2(0,T;(H^1(\Omega))^d)$ with
$\partial_tu_\varepsilon,\partial_tu_0\in L^2(0,T;(H^{-1}(\Omega))^d)$ be the weak solutions to
$(\emph{DP}_\varepsilon)$ and $(\emph{DP}_0)$, respectively. Then by setting
$\varphi_j^\gamma = S_\varepsilon K_\varepsilon(\Psi_{[4\varepsilon^2,2\varepsilon]} \nabla_ju_0^\gamma)$ in $\eqref{eq:3.1}$, we have
\begin{equation}\label{pri:3.0}
\|w_\varepsilon\|_{L^2(0,T;H^1_0(\Omega))}
\leq C\varepsilon^{1/2}\Big\{\|F\|_{L^2(\Omega_T)}
+\|g\|_{H^{1,1/2}(S_T)} + \|h\|_{H^1(\Omega)}\Big\},
\end{equation}
where $C$ depends only on $\mu_1, \mu_2, d,T$ and $\Omega$.
\end{thm}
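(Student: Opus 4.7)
The plan is to combine Lemma \ref{lemma:3.4} with the parabolic energy inequality. Since $w_\varepsilon$ solves (3.2) with zero data on $\partial_p\Omega_T$, testing by $w_\varepsilon$ and using the ellipticity $\eqref{c:1}$ together with the first Korn inequality yields
\begin{equation*}
\|w_\varepsilon\|_{L^2(0,T;H^1_0(\Omega))}\le C\|\tilde f\|_{L^2(\Omega_T)},
\end{equation*}
so the theorem reduces to proving $\|\tilde f\|_{L^2(\Omega_T)}=O(\varepsilon^{1/2})$ with linear dependence on $\|F\|_{L^2(\Omega_T)}+\|g\|_{H^{1,1/2}(S_T)}+\|h\|_{H^1(\Omega)}$. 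The two external inputs I would feed into every estimate below are the layer bound $\eqref{pri:1.5}$ on $\nabla u_0$ and the co-layer bound $\eqref{pri:1.6}$ on $\nabla^2 u_0$ and $\partial_t u_0$, treated as separate lemmas to be proved in this section via the decomposition $u_0=v+\bar w$, a smooth-extension regularity argument for $v$, and the parabolic nontangential maximal function estimates of \cite{SZW22} for $\bar w$.

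I would then analyze the three groups of terms in (3.3) separately, exploiting that $S_\varepsilon$ and $K_\varepsilon$ are convolutions (hence commute with $\nabla$ and $\partial_t$) and that Lemma \ref{lemma:2.1} absorbs an oscillating factor $\varpi(x/\varepsilon,t/\varepsilon^2)$ by $\|\varpi\|_{L^2(Y)}$ at the cost of $\varepsilon^{-1}$ per spatial derivative and $\varepsilon^{-2}$ per time derivative of $S_\varepsilon$. For the zeroth-order piece $(a_\varepsilon-\widehat a)(\nabla u_0-\varphi)$ I split $\Omega_T=\boxbox_{2\varepsilon}\cup\Sigma_{8\varepsilon^2,4\varepsilon}^T$: on the layer, $\Psi\equiv 0$ and the $L^2$-boundedness of $S_\varepsilon K_\varepsilon$ combined with $\eqref{pri:1.5}$ does the job, while on the co-layer $\Psi\equiv 1$ and I decompose $\nabla u_0-S_\varepsilon K_\varepsilon\nabla u_0=(I-K_\varepsilon)\nabla u_0+(I-S_\varepsilon)K_\varepsilon\nabla u_0$ and apply Lemma \ref{lemma:2.4} together with $\eqref{pri:1.6}$. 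For $\varepsilon\varpi_\varepsilon\nabla\varphi$ and for the $\nabla^2\varphi$ portion of $\varepsilon^2\varpi_\varepsilon(\nabla^2\varphi+\partial_t\varphi)$, I redistribute one gradient onto $\Psi\nabla u_0$ using commutativity, obtaining $S_\varepsilon K_\varepsilon(\nabla\Psi\cdot\nabla u_0+\Psi\nabla^2 u_0)$, and again bound by $\eqref{pri:1.5}$ on the layer support of $\nabla\Psi$ and by $\eqref{pri:1.6}$ on the co-layer; the $\nabla^2\varphi$ term spends one extra $\varepsilon^{-1}$ on the outer $\nabla S_\varepsilon$, giving $\varepsilon^2\cdot\varepsilon^{-1}\cdot O(\varepsilon^{-1/2})=O(\varepsilon^{1/2})$.

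The main obstacle is the remaining $\partial_t\varphi$ piece of the second-order term, because $\partial_t\nabla u_0$ is not globally in $L^2$ when $\partial\Omega$ is only Lipschitz. Here the choice $\varphi=S_\varepsilon K_\varepsilon(\Psi\nabla u_0)$ rather than $S_\varepsilon(\Psi\nabla u_0)$ is essential, exactly as flagged in the remark after Corollary \ref{cor:2.1}: commuting $\partial_t$ inside and integrating by parts in space gives
\begin{equation*}
\partial_t\varphi=S_\varepsilon K_\varepsilon(\partial_t\Psi\cdot\nabla u_0)+\nabla S_\varepsilon K_\varepsilon(\Psi\,\partial_t u_0)-S_\varepsilon K_\varepsilon(\nabla\Psi\cdot\partial_t u_0),
\end{equation*}
so only one derivative of $u_0$ appears in each piece. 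The first is handled by $|\partial_t\Psi|\lesssim\varepsilon^{-2}$ and the time-layer portion of $\eqref{pri:1.5}$; the second by the $\varepsilon^{-1}$ cost of $\nabla S_\varepsilon$ against the co-layer bound on $\partial_t u_0$ from $\eqref{pri:1.6}$; each yields $O(\varepsilon^{1/2})$ after the prefactor $\varepsilon^2$. The third piece requires $\|\partial_t u_0\|_{L^2}$ on the lateral slab supporting $\nabla\Psi$, which I would obtain by decomposing $u_0=v+\bar w$ and combining an interior parabolic Caccioppoli inequality (cf.~Lemma \ref{lemma:3.3}) on cubes of radius comparable to $\mathrm{dist}(\cdot,\partial\Omega)$ with $\eqref{pri:1.5}$; for $\bar w$ the observations $\eqref{f:1.1}$--$\eqref{f:1.2}$ and the parabolic nontangential maximal function estimates drive the argument. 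Summing the four contributions yields $\|\tilde f\|_{L^2(\Omega_T)}=O(\varepsilon^{1/2})$ with the required data dependence, and hence $\eqref{pri:3.0}$.
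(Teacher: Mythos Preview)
Your proposal is correct and follows essentially the same route as the paper: the paper packages the reduction $\|w_\varepsilon\|_{L^2(0,T;H^1_0)}\le C\|\tilde f\|_{L^2}$ together with the term-by-term analysis of $\tilde f$ (including the identity \eqref{f:3.29} for $\partial_t\varphi$) as Lemma~\ref{lemma:3.1}, and the layer/co-layer bounds \eqref{pri:1.5}--\eqref{pri:1.6} as Lemma~\ref{lemma:3.2}, then simply invokes both. One simplification you missed: the lateral slab supporting $\nabla\Psi_{[4\varepsilon^2,2\varepsilon]}$ already lies inside $\Sigma^T_{4\varepsilon^2,2\varepsilon}$, so the co-layer bound \eqref{pri:1.6} directly controls $\|\partial_t u_0\|_{L^2}$ there by $O(\varepsilon^{-1/2})$, and the separate Caccioppoli/nontangential argument you sketch for the third piece of $\partial_t\varphi$ is unnecessary.
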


\begin{lemma}\label{lemma:3.1}
Suppose $u_0\in (W^{2,1}_{2,\emph{loc}}(\Omega_T))^d$.
Let $w_\varepsilon$ be given as in $\eqref{eq:3.1}$ and
satisfy the problem $\eqref{eq:3.2}$.
Then by choosing $\varphi_j^\gamma =
S_\varepsilon K_\varepsilon(\Psi_{[4\varepsilon^2,2\varepsilon]}\nabla_ju_0^\gamma)$
in $\eqref{eq:3.1}$, we may have
\begin{equation}
\begin{aligned}
\|w_\varepsilon\|_{L^2(0,T;H_0^1(\Omega))}
&\leq C\bigg\{\sup_{\varepsilon^2<t<T} \Big(\int_{t-\varepsilon^2}^t
\int_{\Sigma_{2\varepsilon}}|\nabla u_0|^2 dxds\Big)^{\frac{1}{2}}
+\|\nabla u_0\|_{L^2((\Omega\setminus\Sigma_{4\varepsilon})_T)}\bigg\}\\
& + C\varepsilon\Bigg\{
\|\nabla^2 u_0 \|_{L^2(\Sigma_{4\varepsilon^2,2\varepsilon}^{T})}
+\|\partial_t u_0 \|_{L^2(\Sigma_{4\varepsilon^2;2\varepsilon}^T)}\Bigg\},
\end{aligned}
\end{equation}
where $C$ depends only on $\mu_1,\mu_2,d,T$ and $\Omega$.
\end{lemma}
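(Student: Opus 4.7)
The plan is to apply the standard parabolic energy estimate to problem \eqref{eq:3.2} provided by Lemma \ref{lemma:3.4}. Since $A_\varepsilon$ satisfies \eqref{c:1} and $w_\varepsilon$ has vanishing parabolic boundary data, the first Korn inequality combined with the basic energy estimate (as in Theorem \ref{thm:2.1}) gives $\|w_\varepsilon\|_{L^2(0,T;H_0^1(\Omega))}\leq C\|\tilde{f}\|_{L^2(\Omega_T)}$, so the proof reduces to bounding $\|\tilde f\|_{L^2(\Omega_T)}$ by the four quantities on the right-hand side. Write $\Psi = \Psi_{[4\varepsilon^2,2\varepsilon]}$ and $\varphi_j^\gamma = S_\varepsilon K_\varepsilon(\Psi\nabla_j u_0^\gamma)$; the four groups of terms in \eqref{eq:3.3} will be handled in turn.

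For the leading group $(a - \hat{a})(\nabla u_0 - \varphi)$, I split
\begin{equation*}
\nabla u_0 - \varphi = (1-\Psi)\nabla u_0 + (I - K_\varepsilon)(\Psi\nabla u_0) + (I - S_\varepsilon)K_\varepsilon(\Psi\nabla u_0).
\end{equation*}
The first piece is supported in $\Omega_T\setminus\Sigma_{8\varepsilon^2,4\varepsilon}^T$, which decomposes into the lateral layer $(\Omega\setminus\Sigma_{4\varepsilon})_T$ plus a time-layer $\Sigma_{4\varepsilon}\times([0,8\varepsilon^2]\cup[T-8\varepsilon^2,T])$, the latter being a union of $O(1)$ windows of length $\varepsilon^2$ and so absorbed into the supremum term. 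The second piece is controlled by Lemma \ref{lemma:2.4} \eqref{pri:2.13} as $C\varepsilon\|\nabla(\Psi\nabla u_0)\|_{L^2}$; using $|\nabla\Psi|\leq C/\varepsilon$ together with $\mathrm{supp}(\nabla\Psi)\subset(\Omega\setminus\Sigma_{4\varepsilon})_T$, this is bounded by $C\|\nabla u_0\|_{L^2((\Omega\setminus\Sigma_{4\varepsilon})_T)}+C\varepsilon\|\nabla^2u_0\|_{L^2(\Sigma_{4\varepsilon^2,2\varepsilon}^T)}$. The third piece, via Lemma \ref{lemma:2.4} \eqref{pri:2.14}, produces $C\varepsilon\|\nabla K_\varepsilon(\Psi\nabla u_0)\|_{L^2}+C\varepsilon^2\|\partial_t K_\varepsilon(\Psi\nabla u_0)\|_{L^2}$: the spatial-gradient part is absorbed as in the second piece using $\nabla K_\varepsilon = K_\varepsilon\nabla$ and the first inequality of \eqref{pri:2.17}, while the $\partial_t$ part is handled by the trick developed for Group IV below.

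Groups II and III consist of $\varepsilon\,\varpi(\cdot/\varepsilon,\cdot/\varepsilon^2)\nabla\varphi$ and $\varepsilon^2\,\varpi(\cdot/\varepsilon,\cdot/\varepsilon^2)\nabla^2\varphi$, where the periodic coefficients $\varpi$ (namely $a\chi$, $E$, and $a\nabla_y E$) all lie in $L_{\mathrm{per}}^2(Y)$ by Lemma \ref{lemma:2.3}. The essential point is to push one spatial derivative through both smoothers by writing $\nabla\varphi_j = S_\varepsilon K_\varepsilon\nabla(\Psi\nabla_j u_0)$ and $\nabla^2\varphi_j = \nabla S_\varepsilon K_\varepsilon\nabla(\Psi\nabla_j u_0)$. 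Then \eqref{pri:2.1} yields
\begin{equation*}
\varepsilon\|\varpi(\cdot/\varepsilon,\cdot/\varepsilon^2)\nabla\varphi\|_{L^2(\Omega_T)}+\varepsilon^2\|\varpi(\cdot/\varepsilon,\cdot/\varepsilon^2)\nabla^2\varphi\|_{L^2(\Omega_T)}\leq C\varepsilon\|\nabla(\Psi\nabla u_0)\|_{L^2(\Omega_T)},
\end{equation*}
which is absorbed into the lateral-layer and co-layer $\nabla^2 u_0$ norms exactly as in Group I.

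The main obstacle is Group IV, namely $\varepsilon^2 E_{i(d+1)j}(\cdot/\varepsilon,\cdot/\varepsilon^2)\partial_t\varphi_j^\gamma$, because a direct application of \eqref{pri:2.1} would only yield $C\|\nabla u_0\|_{L^2(\Sigma_{4\varepsilon^2,2\varepsilon}^T)}$, which does not vanish with $\varepsilon$. Since $\partial_t$ commutes with both smoothers,
\begin{equation*}
\partial_t\varphi_j = S_\varepsilon K_\varepsilon\bigl[(\partial_t\Psi)\nabla_j u_0 + \Psi\nabla_j\partial_t u_0\bigr],
\end{equation*}
and the key algebraic trick is the spatial product rule on the second summand: $\Psi\nabla_j\partial_t u_0 = \nabla_j(\Psi\partial_t u_0) - (\nabla_j\Psi)\partial_t u_0$. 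The three resulting pieces are bounded respectively by: (i) $\varepsilon^2\|(\partial_t\Psi)\nabla u_0\|_{L^2}\leq C\|\nabla u_0\|_{L^2(\mathrm{supp}(\partial_t\Psi))}$, a pure time-layer contribution absorbed into the supremum since $\mathrm{supp}(\partial_t\Psi)\subset\Sigma_{2\varepsilon}\times([4\varepsilon^2,8\varepsilon^2]\cup[T-8\varepsilon^2,T-4\varepsilon^2])$; (ii) $\varepsilon^2\|\nabla_jS_\varepsilon K_\varepsilon(\Psi\partial_t u_0)\|_{L^2}\leq C\varepsilon\|\partial_t u_0\|_{L^2(\Sigma_{4\varepsilon^2,2\varepsilon}^T)}$, obtained from the $\varepsilon^{-1}$ gain in \eqref{pri:2.1} after commuting $\nabla_j$ out of both smoothers; and (iii) $\varepsilon^2\|(\nabla_j\Psi)\partial_t u_0\|_{L^2}\leq C\varepsilon\|\partial_t u_0\|_{L^2(\Sigma_{4\varepsilon^2,2\varepsilon}^T)}$ using $|\nabla\Psi|\leq C/\varepsilon$. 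The same manoeuvre disposes of the residual $\partial_t K_\varepsilon(\Psi\nabla u_0)$ term from Group I, completing the bound on $\|\tilde f\|_{L^2(\Omega_T)}$.
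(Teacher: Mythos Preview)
Your proof is correct and follows essentially the same route as the paper's: the energy estimate reduces everything to $\|\tilde f\|_{L^2(\Omega_T)}$, the leading term is split via $\nabla u_0-\varphi=(1-\Psi)\nabla u_0+(I-K_\varepsilon)(\Psi\nabla u_0)+(I-S_\varepsilon)K_\varepsilon(\Psi\nabla u_0)$ and handled by Lemma~\ref{lemma:2.4}, Groups II--III are treated by pushing one spatial derivative through the smoothers and applying \eqref{pri:2.1}, and the crucial Group~IV identity $\partial_t S_\varepsilon K_\varepsilon(\Psi\nabla u_0)=S_\varepsilon K_\varepsilon(\partial_t\Psi\,\nabla u_0)+\nabla S_\varepsilon K_\varepsilon(\Psi\,\partial_t u_0)-S_\varepsilon K_\varepsilon(\nabla\Psi\,\partial_t u_0)$ is exactly the paper's formula~\eqref{f:3.29}. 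One minor phrasing point: in piece~(ii) of Group~IV you do not really ``commute $\nabla_j$ out'' through the periodic weight; rather you apply the second line of \eqref{pri:2.1} directly to $\varpi(\cdot/\varepsilon,\cdot/\varepsilon^2)\nabla S_\varepsilon\bigl(K_\varepsilon(\Psi\partial_t u_0)\bigr)$, which gives the same $C\varepsilon^{-1}$ gain and hence the bound $C\varepsilon\|\partial_t u_0\|_{L^2(\Sigma_{4\varepsilon^2,2\varepsilon}^T)}$ you claim.
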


\begin{proof}
Noting that $\varphi_j^\gamma =
S_\varepsilon K_\varepsilon(\Psi_{[4\varepsilon^2,2\varepsilon]}\nabla_ju_0^\gamma)$ in $\eqref{eq:3.1}$ such that $w_\varepsilon\in L^2(0,T;(H_0^1(\Omega))^d)$ satisfies $\eqref{eq:3.2}$, it follows from
the estimate $\eqref{pri:2.3}$ and the expression $\eqref{eq:3.3}$ that
\begin{equation}\label{f:3.1}
\begin{aligned}
\|w_\varepsilon\|_{L^2(0,T;H_0^1(\Omega))}
&\leq C\|\tilde{f}\|_{L^2(\Omega_T)} \\
&\leq
C\big\|\nabla u_0
- S_\varepsilon K_\varepsilon(\Psi_{[4\varepsilon^2,2\varepsilon]}\nabla u_0 )\big\|_{L^2(\Omega_T)} \\
& +C\varepsilon\big\|\varpi(\cdot/\varepsilon,\cdot/\varepsilon^2)\nabla S_\varepsilon K_\varepsilon
(\Psi_{[4\varepsilon^2,2\varepsilon]}\nabla u_0)\big\|_{L^2(\Omega_T)}\\
& +C\varepsilon^2 \big\|\varpi(\cdot/\varepsilon,\cdot/\varepsilon^2)\nabla^2 S_\varepsilon K_\varepsilon
(\Psi_{[4\varepsilon,2\varepsilon]}\nabla u_0)\big\|_{L^2(\Omega_T)}\\
& +C\varepsilon^2 \big\|\varpi(\cdot/\varepsilon,\cdot/\varepsilon^2)\partial_t S_\varepsilon K_\varepsilon
(\Psi_{[4\varepsilon^2,2\varepsilon]}\nabla u_0)\big\|_{L^2(\Omega_T)}
=: I_1 + I_2 + I_3 + I_4.
\end{aligned}
\end{equation}
To estimate $I_1$, we first notice
the following fact
\begin{equation*}
\begin{aligned}
\nabla u_0
- S_\varepsilon K_\varepsilon(\Psi_{[4\varepsilon^2,2\varepsilon]}\nabla u_0)
&= \Psi_{[4\varepsilon^2,2\varepsilon]}\nabla u_0
-  K_\varepsilon(\Psi_{[4\varepsilon^2,2\varepsilon]}\nabla u_0) \\
& + K_\varepsilon (\Psi_{[4\varepsilon^2,2\varepsilon]} \nabla u_0)
- S_\varepsilon K_\varepsilon(\Psi_{[4\varepsilon^2,2\varepsilon]}\nabla u_0)
+ (1 -\Psi_{[4\varepsilon^2,2\varepsilon]})\nabla u_0.
\end{aligned}
\end{equation*}
Hence, it follows from the estimates $\eqref{pri:2.13}$, $\eqref{pri:2.14}$
and $\eqref{pri:2.17}$  that
\begin{equation*}
\begin{aligned}
I_1&\leq C\Bigg\{\|
\Psi_{[4\varepsilon^2,2\varepsilon]}\nabla u_0
-  K_\varepsilon(\Psi_{[4\varepsilon^2,2\varepsilon]}\nabla u_0)\|_{L^2(\mathbb{R}^{d+1})}\\
& \qquad\qquad+ \|K_\varepsilon (\Psi_{[4\varepsilon^2,2\varepsilon]} \nabla u_0)
- S_\varepsilon K_\varepsilon(\Psi_{[4\varepsilon^2,2\varepsilon]}\nabla u_0)
\|_{L^2(\mathbb{R}^{d+1})}
+\|(1-\Psi_{[4\varepsilon^2,2\varepsilon]})\nabla u_0\|_{L^2(\Omega_T)}\Bigg\}\\
&\leq C\varepsilon
\bigg\{\|\nabla(\Psi_{[4\varepsilon^2,2\varepsilon]}\nabla u_0)\|_{L^2(\mathbb{R}^d)}
+ \varepsilon\|\partial_tK_\varepsilon(\Psi_{[4\varepsilon^2,2\varepsilon]}\nabla u_0)\|_{L^2(\mathbb{R}^d)}
\bigg\}
+C\|\nabla u_0\|_{L^2(\boxbox_{2\varepsilon})},
\end{aligned}
\end{equation*}
where we also use the fact that $\Psi_{[4\varepsilon^2,2\varepsilon]}$ is supported in
$\Sigma_{4\varepsilon^2,2\varepsilon}^T$, and
$\boxbox_{2\varepsilon}=\Omega_T\setminus \Sigma_{8\varepsilon^2,4\varepsilon}^T$.

Since there holds the following identity
\begin{equation}\label{f:3.28}
\begin{aligned}
\partial_t K_\varepsilon(\Psi_{[4\varepsilon^2,2\varepsilon]}\nabla u_0)
= K_\varepsilon(\partial_t\Psi_{[4\varepsilon^2,2\varepsilon]}\nabla u_0)
+ \nabla K_\varepsilon(\Psi_{[4\varepsilon^2,2\varepsilon]}\partial_t u_0)
- K_\varepsilon(\nabla\Psi_{[4\varepsilon^2,2\varepsilon]}\partial_t u_0),
\end{aligned}
\end{equation}
we then obtain
\begin{equation}\label{f:3.2}
\begin{aligned}
I_1 &\leq C\|\nabla u_0\|_{L^2(\boxbox_\varepsilon)}
+ C\varepsilon\Big\{\|\nabla^2 u_0\|_{L^2(\Sigma_{4\varepsilon^2,2\varepsilon}^T)}
+\|\partial_t u_0\|_{L^2(\Sigma_{4\varepsilon^2,2\varepsilon}^T)}\Big\}\\
&\leq C\Bigg\{\sup_{\varepsilon^2<t<T} \Big(\int_{t-\varepsilon^2}^t
\int_{\Sigma_{2\varepsilon}}|\nabla u_0|^2 dxds\Big)^{\frac{1}{2}}
+\|\nabla u_0\|_{L^2((\Omega\setminus\Sigma_{2\varepsilon})_T)}\Bigg\}\\
& + C\varepsilon\Bigg\{
\|\nabla^2 u_0 \|_{L^2(\Sigma_{4\varepsilon^2,2\varepsilon}^{T})}
+\|\partial_t u_0 \|_{L^2(\Sigma_{4\varepsilon^2;2\varepsilon}^T)}\Bigg\},
\end{aligned}
\end{equation}
where we use the estimates $\eqref{pri:2.17}$ in the first inequality again, and the second one is due to
\begin{equation}\label{f:3.3}
\|\nabla u_0\|_{L^2(\boxbox_\varepsilon)}
\leq C\bigg\{
\|\nabla u_0\|_{L^2((\Omega\setminus\Sigma_{2\varepsilon})_T)}
+ \sup_{\varepsilon^2<t<T} \Big(\int_{t-\varepsilon^2}^t
\int_{\Sigma_{2\varepsilon}}|\nabla u_0|^2 dxds\Big)^{\frac{1}{2}}\bigg\}.
\end{equation}
We now proceed to study $I_2$, and it follows from the first estimate in $\eqref{pri:2.1}$ that
\begin{equation}\label{f:3.4}
\begin{aligned}
I_2
\leq C\varepsilon\|\nabla(\Psi_{[4\varepsilon^2,2\varepsilon]}
\nabla u_0 )\|_{L^2(\mathbb{R}^{d+1})}
\leq C\Big\{\|\nabla u_0\|_{L^2((\Omega\setminus\Sigma_{2\varepsilon})_T)}
+ \varepsilon\|\nabla^2 u_0\|_{L^2(\Sigma_{4\varepsilon^2,2\varepsilon}^T)}\Big\}.
\end{aligned}
\end{equation}
By observing that
\begin{equation*}
\nabla^2S_\varepsilon K_\varepsilon(\Psi_{[4\varepsilon^2,\varepsilon]}\nabla u_0)
= \nabla S_\varepsilon K_\varepsilon(\nabla(\Psi_{[4\varepsilon^2,\varepsilon]}\nabla u_0)),
\end{equation*}
it is not hard to derive
\begin{equation}\label{f:3.5}
I_3 \leq C\varepsilon\|\nabla(\Psi_{[4\varepsilon^2,\varepsilon]}
\nabla u_0)\|_{L^2(\mathbb{R}^{d+1})}
\leq C\Big\{\|\nabla u_0\|_{L^2((\Omega\setminus\Sigma_{2\varepsilon})_T)}
+ \varepsilon\|\nabla^2 u_0\|_{L^2(\Sigma_{4\varepsilon^2,2\varepsilon}^T)}\Big\}
\end{equation}
from the second estimate in $\eqref{pri:2.1}$.
Based upon a similar fact that
\begin{equation}\label{f:3.29}
\partial_t S_\varepsilon K_\varepsilon(\Psi_{[4\varepsilon^2,2\varepsilon]}\nabla u_0)
= S_\varepsilon K_\varepsilon(\partial_t\Psi_{[4\varepsilon^2,2\varepsilon]}\nabla u_0)
+ \nabla S_\varepsilon K_\varepsilon(\Psi_{[4\varepsilon^2,2\varepsilon]}\partial_t u_0)
- S_\varepsilon K_\varepsilon(\nabla\Psi_{[4\varepsilon^2,2\varepsilon]}\partial_t u_0),
\end{equation}
using the estimates $\eqref{pri:2.1}$ and $\eqref{pri:2.17}$ again, we arrive at
\begin{equation}\label{f:3.6}
I_4 \leq  C\sup_{\varepsilon^2<t<T} \Big(\int_{t-\varepsilon^2}^t
\int_{\Sigma_{2\varepsilon}}|\nabla u_0|^2 dxds\Big)^{\frac{1}{2}}
+ C\varepsilon\|\partial_t u_0 \|_{L^2(\Sigma_{4\varepsilon^2;2\varepsilon}^T)}.
\end{equation}

Consequently, plugging $\eqref{f:3.2}$, $\eqref{f:3.4}$, $\eqref{f:3.5}$ and $\eqref{f:3.6}$
back into $\eqref{f:3.1}$ leads to the desired result, and we have completed the proof.
\end{proof}

\begin{lemma}\label{lemma:3.3}
Suppose $F\in (L^2(\Omega_T))^d$, $g\in ({_0H^{1,1/2}(S_T)})^d$ and
$h\in (H^1_0(\Omega))^d$.
Let $u_0\in (W_2^{1,0}(\Omega_T))^d\cap (W^{2,1}_{2,\emph{loc}}(\Omega_T))^d$ be a weak solution of
$(\emph{DP}_0)$.
Then for any $\varepsilon^2<t\leq T$, we have the following
interior estimates
\begin{equation}\label{pri:3.4}
\begin{aligned}
\int_{t-\varepsilon^2}^t\int_{\Sigma_{2\varepsilon}}|\nabla u_0|^2 dx ds
&\leq C\varepsilon^{-2}\int_{t-\varepsilon^2}^t
\int_{\Sigma_{2\varepsilon}\setminus\Sigma_{4\varepsilon}}|u_0|^2 dxds
+ C\bigg(\int_{t-\varepsilon^2}^t\int_{\Sigma_{2\varepsilon}}
|u_0|^2 dxds\bigg)^{\frac{1}{2}} \\
&\times\Bigg\{
\Big(\int_{t-\varepsilon^2}^t\int_{\Sigma_{2\varepsilon}}
|\partial_t u_0|^2 dxds\Big)^{\frac{1}{2}}
 +\Big(\int_{0}^T\int_{\Sigma_{2\varepsilon}}
|F|^2 dxds\Big)^{\frac{1}{2}}
\Bigg\},
\end{aligned}
\end{equation}
where $C$ depends on $\mu_1,\mu_2,d$, as well as,
\begin{equation}\label{pri:3.6}
  \int_0^T\int_{\Sigma_r} |\nabla^2 u_0|^2 dxdt
  \leq C_r \bigg\{\int_{\Omega_T} |\nabla u_0|^2 dxdt
  +\int_{\Omega_T} |F|^2 dxdt\bigg\},
\end{equation}
where $C_r$ will blow up as $r\to 0$. Moreover, there also holds a global estimate
\begin{equation}\label{pri:3.5}
\sup_{0\leq t\leq T}\Big(\int_{\Omega}|u_0|^2 dx \Big)^{\frac{1}{2}}
+ \Big(\int_0^T\int_{\Omega}|\nabla u_0|^2 dxdt\Big)^{\frac{1}{2}}
\leq C\Big\{\|F\|_{L^2(\Omega_T)}+\|h\|_{L^2(\Omega)}+\|g\|_{H^{1,1/2}(S_T)}\Big\},
\end{equation}
where $C$ depends on $\mu_1,\mu_2,d,T$ and $\Omega$.
\end{lemma}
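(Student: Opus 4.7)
My plan is to treat the three estimates in the order $\eqref{pri:3.5}$, $\eqref{pri:3.6}$, $\eqref{pri:3.4}$, as they vary considerably in difficulty. The first is a direct specialization of Theorem $\ref{thm:2.1}$ to constant coefficients, the second is classical interior regularity for a constant-coefficient parabolic system, and $\eqref{pri:3.4}$ is the genuinely new time-layer Caccioppoli inequality on which the rest of Section $\ref{section:3}$ will rest.

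For the global energy estimate $\eqref{pri:3.5}$, I would simply replay the proof of Theorem $\ref{thm:2.1}$ with $\varepsilon=0$. Introducing the heat extension $G$ of the lateral data $g$ furnished by \cite[Theorems 2.9, 3.4]{MC} and setting $z_0 = u_0 - G$, I observe that $z_0$ solves $(\partial_t + \mathcal{L}_0)z_0 = F + \text{div}(\nabla G - \widehat{A}\nabla G)$ with zero lateral data and initial data $h$. Applying the Ladyzhenskaya--Solonnikov energy inequality \cite[Lemma 2.1, Chapter III]{OAL} together with the first Korn inequality and returning to $u_0$ produces the spatial-gradient bound, while $\sup_t \|u_0\|_{L^2(\Omega)}$ is extracted from $\partial_t u_0 = F - \mathcal{L}_0 u_0$ and \cite[Theorem 3, pp.303]{LCE}, exactly as in the closing step of Theorem $\ref{thm:2.1}$.

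For the interior $H^2$ estimate $\eqref{pri:3.6}$, I would exploit that $\widehat{A}$ is constant, so on any interior subregion the equation reads $\partial_t u_0 - \widehat{a}_{ij}^{\alpha\beta}\partial_i\partial_j u_0^\beta = F$. Fixing a spatial cutoff $\phi \in C_c^\infty(\Omega)$ with $\phi = 1$ on $\Sigma_r$, $\phi = 0$ off $\Sigma_{r/2}$, and $|\nabla^k\phi| \le Cr^{-k}$ for $k=0,1,2$, I would apply the spatial difference-quotient method to the equation satisfied by $u_0$, localized by $\phi$; since the coefficients are constant, the second differences satisfy the same equation modulo commutators involving $\nabla\phi$ and $\nabla^2\phi$ times $u_0$ and $\nabla u_0$, which are in $L^2(\Omega_T)$. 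Testing with themselves produces $L^2$ control of $\nabla^2 u_0$ on $\Sigma_r$, with the constant $C_r$ governed by $\|\nabla\phi\|_\infty^2$ and $\|\nabla^2\phi\|_\infty$, which blows up as $r \to 0$; the $h$-dependence is absorbed through the already-controlled $\|\nabla u_0\|_{L^2(\Omega_T)}$ via $\eqref{pri:3.5}$.

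The centerpiece is $\eqref{pri:3.4}$, where I would test $\partial_s u_0 + \mathcal{L}_0 u_0 = F$ on $\Omega \times (t-\varepsilon^2, t)$ against $\phi^2 u_0$, with $\phi \in C_c^\infty(\Omega)$ equal to $1$ on $\Sigma_{4\varepsilon}$, vanishing off $\Sigma_{2\varepsilon}$, and with $|\nabla\phi|\le C/\varepsilon$ supported on the annulus $\Sigma_{2\varepsilon}\setminus\Sigma_{4\varepsilon}$. Integration by parts combined with ellipticity $\eqref{c:1}$ and the first Korn inequality absorbs $\int\phi^2|\nabla u_0|^2$ on the left; Young's inequality converts the cross term $\int \phi\,\widehat{A}\nabla u_0\cdot\nabla\phi\,u_0$ into $C\varepsilon^{-2}\int_{t-\varepsilon^2}^{t}\int_{\Sigma_{2\varepsilon}\setminus\Sigma_{4\varepsilon}}|u_0|^2$, and Cauchy--Schwarz converts $\int \partial_s u_0\cdot\phi^2 u_0$ and $\int F\cdot\phi^2 u_0$ into the second and third terms on the right of $\eqref{pri:3.4}$ (the $F$-term is harmlessly enlarged from the slab to $(0,T)$). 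This first step only yields $\int|\nabla u_0|^2$ over $\Sigma_{4\varepsilon}$; to lift the left-hand domain to the full $\Sigma_{2\varepsilon}$ claimed in $\eqref{pri:3.4}$, I would bound the remaining annular piece $\int_{\Sigma_{2\varepsilon}\setminus\Sigma_{4\varepsilon}}|\nabla u_0|^2$ by a pointwise interior parabolic estimate for the constant-coefficient system: each $X \in (\Sigma_{2\varepsilon}\setminus\Sigma_{4\varepsilon})\times(t-\varepsilon^2,t)$ admits a parabolic sub-ball $P(X,c\varepsilon)\subset\Omega\times(t-2\varepsilon^2,t)$ on which $|\nabla u_0(X)|^2 \lesssim \varepsilon^{-d-4}\int_{P(X,c\varepsilon)}|u_0|^2 + \varepsilon^{-d}\int_{P(X,c\varepsilon)}|F|^2$, and Fubini then reintroduces exactly the same RHS structure. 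The main obstacle will be the bookkeeping of these averaging neighborhoods: one has to choose the sub-ball radius and the slight time-enlargement so that the domain of integration in the $u_0$-norm stays inside a controlled dilate of $\Sigma_{2\varepsilon}\setminus\Sigma_{4\varepsilon}$, and so that the one-sided time slab near $s=t$ is absorbed by the global square-integrability of $\partial_s u_0$ and $F$.
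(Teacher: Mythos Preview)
Your core approach to $\eqref{pri:3.4}$ --- testing $(\partial_s + \mathcal{L}_0)u_0 = F$ against $\phi^2 u_0$ with a purely spatial cutoff, extracting $\int\phi^2|\nabla u_0|^2$ via the elasticity condition through a localized Korn-type identity, then Young and Cauchy--Schwarz --- is exactly what the paper does. The paper, however, stops after that single cutoff step and simply records the result with $\Sigma_{2\varepsilon}$ on the left; strictly speaking its cutoff ($\psi=1$ on $\Sigma_{4\varepsilon}$, $\psi=0$ outside $\Sigma_{2\varepsilon}$) only delivers $\int_{\Sigma_{4\varepsilon}}|\nabla u_0|^2$ on the left, so the displayed inequality carries a harmless index slip that the downstream uses in Lemma~$\ref{lemma:3.2}$ do not feel. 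Your proposed annular repair via pointwise interior parabolic estimates is therefore unnecessary, and as sketched it has its own wrinkle: the sub-balls $P(X,c\varepsilon)$ for $X$ with time coordinate near $s=t$ protrude past $t$, so after Fubini the $|u_0|^2$ and $|\partial_t u_0|^2$ integrals on the right would sit on a time slab strictly larger than $(t-\varepsilon^2,t)$, no longer matching the stated right-hand side. The clean resolution is simply to shift the cutoff (e.g.\ $\psi=1$ on $\Sigma_{2\varepsilon}$, $\psi=0$ off $\Sigma_{\varepsilon}$) and accept that the annulus in the first right-hand term moves accordingly; every application only needs the scaling, not the exact indices.

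For $\eqref{pri:3.5}$ you and the paper genuinely diverge. Your route --- rerun the proof of Theorem~$\ref{thm:2.1}$ for the constant-coefficient operator $\mathcal{L}_0$, which inherits the elasticity hypothesis --- is correct and even yields the stronger bound with $\|g\|_{H^{1/2,1/4}(S_T)}$ on the right. The paper instead tests against $u_0$ while retaining the boundary term $\int_{\partial\Omega}(\partial u_0/\partial\nu_0)\,u_0\,dS$, splits $u_0 = v+w$ into a piece with zero lateral data (handled by Gronwall) and a piece with zero source and initial data, and controls the latter through the Rellich-type equivalence $\|\partial w/\partial\nu_0\|_{L^2(S_T)} + \|w\|_{L^2(S_T)} \approx \|w\|_{H^{1,1/2}(S_T)}$ from \cite[Lemma 4.3.13]{SZW22}. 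Your argument is more elementary; the paper's is a rehearsal for the $u_0 = v + \bar w$ decomposition that drives the layer and co-layer estimates in Lemma~$\ref{lemma:3.2}$.

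For $\eqref{pri:3.6}$ the paper simply cites \cite[Theorem 3.4.1]{WZQ}; your difference-quotient sketch is the standard proof of that result and is fine.
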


\begin{remark}
\emph{In fact, the estimate $\eqref{pri:3.5}$ could be improved from the point of view
in the homogenization theory, and one may easily derive from the estimates $\eqref{pri:2.18}$
and $\eqref{f:2.10}$ that
\begin{equation*}
\sup_{0\leq t\leq T}\|u_0\|_{L^2(\Omega)}
+ \big\|\nabla u_0\big\|_{L^2(\Omega_T)}
\leq C\Big\{\big\|F\big\|_{L^2(0,T;H^{-1}(\Omega))}+\big\|h\big\|_{L^2(\Omega)}
+ \big\|g\big\|_{H^{1/2,1/4}(S_T)}\Big\},
\end{equation*}
by noting that $\|\cdot\|_{L^2(0,T;H^1(\Omega))}$ and
$\|\cdot\|_{L^\infty(0,T;L^2(\Omega))}$ are lower semi-continuous with
respect to the weak convergence and to the $\text{weak}^\ast$ convergence, respectively.}
\end{remark}

\begin{proof}
The estimate $\eqref{pri:3.4}$ is known as Caccioppoli's inequality.
Let $\psi^2u_0$ be a test function, where $\psi\in C_0^{1}(\Omega)$.
By the divergence theorem, we have
\begin{equation*}
\int_{\Omega} \psi^2 \partial_t u_0 u_0 dx
+ \int_{\Omega}\psi^2\widehat{A}\nabla u_0\cdot\nabla u_0 dx
+ 2\int_{\Omega}\psi\widehat{A}\nabla u_0\cdot\nabla\psi u_0 dx
= \int_\Omega \psi^2 Fu_0 dx.
\end{equation*}
On account of the elasticity assumption $\eqref{c:1}$ and Young's inequality, there holds
\begin{equation}\label{f:3.31}
\frac{\mu_1}{8}\int_{\Omega}\psi^2|\nabla u_0|^2 dx
\leq C\int_{\Omega}|\nabla\psi|^2|u_0|^2 dx
+ \int_{\Omega}\big| \psi^2 \partial_t u_0 u_0 \big|dx
+ \int_\Omega  \big| \psi^2 Fu_0 \big| dx,
\end{equation}
where we exactly carry out a following simple computation:
\begin{equation*}
\begin{aligned}
\int_{\Omega}\psi^2\widehat{A}\nabla u_0 \nabla u_0 dx
&\geq \frac{\mu_1}{4}\int_{\Omega}\psi^2|\nabla u_0 + (\nabla u_0)^T|^2 dx\\
&\geq \frac{\mu_1}{2}\int_{\Omega}\psi^2|\nabla u_0|^2 dx
- \mu_1\int_{\Omega} |\psi\nabla u_0||\nabla \psi u_0| dx,
\end{aligned}
\end{equation*}
and the notation $(\nabla u_0)^T$ denotes the transpose of $d\times d$ matrix $\nabla u_0$.

Here, we concretely choose $\psi=\psi_{2\varepsilon}$ to be the cut-off function, where
$\psi_{2\varepsilon} = 1$ in $\Sigma_{4\varepsilon}$, $\psi_{2\varepsilon} = 0$ outside $\Sigma_{2\varepsilon}$ and $|\nabla\psi_{2\varepsilon}|\leq C/\varepsilon$.
Hence, we obtain
\begin{equation*}
\int_{\Sigma_{2\varepsilon}}|\nabla u|^2 dx
\leq C\varepsilon^{-2}\int_{\Sigma_{2\varepsilon}\setminus\Sigma_{4\varepsilon}}
|u|^2 dx
+ \int_{\Sigma_{2\varepsilon}}\big|\partial_t u u \big|dx
+ \int_{\Sigma_{2\varepsilon}}  \big|Fu\big| dx,
\end{equation*}
and then take integral on the both sides from $t-\varepsilon^2$ to $t$. The desired estimate
$\eqref{pri:3.4}$ immediately follows from Cauchy's inequality.

The estimate $\eqref{pri:3.6}$ has been proved in \cite[Theorem 3.4.1]{WZQ} in detail,
so the proof will not be repeated here.

We now turn to address the estimate $\eqref{pri:3.5}$. Since
$\partial_t u_0 + \mathcal{L}_0(u_0) = F$ in $\Omega_T$,
taking $u_0$ as the text function and then integrating by parts, we have
\begin{equation*}
\frac{1}{2}\frac{\partial}{\partial t}
\Big(\int_{\Omega} u_0^2 dx\Big)
+ \int_\Omega \widehat{A}\nabla u_0 \nabla u_0 dx
= \int_\Omega Fu_0 dx + \int_{\partial\Omega} \frac{\partial u_0}{\partial \nu_0} u_0 dS.
\end{equation*}
Here we assume $\|(\partial u_0/\partial\nu_0)\|_{L^2(S_T)}<\infty$ for a movement to make the above
identity reasonable.
By the elasticity assumption $\eqref{c:1}$, we have
\begin{equation}\label{f:3.22}
\frac{\partial}{\partial t}
\Big(\int_{\Omega} u_0^2 dx\Big)
+ \mu_1\int_{\Omega}|\nabla u_0|^2 dx
\leq  2\int_{\Omega}|Fu_0|dx
+ 2\int_{\partial\Omega}\Big|\frac{\partial u_0}{\partial \nu_0}u_0\Big|dS
+ \mu_1\int_{\partial\Omega}|\nabla_{\text{tan}}u_0||u_0| dS,
\end{equation}
where the symbol $\nabla_{\tan} =
n_i\frac{\partial}{\partial x_\alpha} - n_\alpha\frac{\partial}{\partial x_i}$ denotes
the a tangential derivative.
Here we also employ the following Korn inequality:
\begin{equation*}
\int_{\Omega}|\nabla u_0 + (\nabla u_0)^T|^2dx
\geq 2\int_\Omega|\nabla u_0|^2dx - 2\int_{\partial\Omega}|\nabla_{\text{tan}}u_0||u_0|dS.
\end{equation*}
Then
it is fine to
split $u_0$ into $v$ and $w$, and they satisfy
\begin{equation*}
(\text{a}) \left\{\begin{aligned}
\partial_t v + \mathcal{L}_0(v) &= F &\quad&\text{in}\quad \Omega_T,\\
v & = 0 &~& \text{on}\quad S_T,\\
v & = h &~& \text{on}\quad \Omega_0,
\end{aligned}\right.
\qquad
(\text{b}) \left\{\begin{aligned}
\partial_t w + \mathcal{L}_0(w) &= 0 &\quad&\text{in}\quad \Omega_T,\\
w & = g &~& \text{on}\quad S_T,\\
w & = 0 &~& \text{on}\quad \Omega_0,
\end{aligned}\right.
\end{equation*}
respectively.
Thus, concerning $(\text{a})$, the estimate $\eqref{f:3.22}$ coupled with the Gronwall's inequality yields
\begin{equation}\label{f:3.23}
\max_{0\leq t\leq T}\|v\|_{L^2(\Omega)}
+ \|\nabla v\|_{L^2(\Omega_T)}
\leq C\Big\{\|h\|_{L^2(\Omega)}
+\|F\|_{L^2(\Omega_T)}\Big\}
\end{equation}
For $(\text{b})$, the estimate together with Cauchy's inequality and the trace theorem
gives
\begin{equation*}
\frac{\partial}{\partial t}
\Big(\int_{\Omega} w^2 dx\Big)
+ \mu_1\int_{\Omega}|\nabla w|^2 dx
\leq C\bigg\{\int_{\Omega} w^2 dx
+ \int_{\partial\Omega}\Big|\frac{\partial w}{\partial \nu_0}\Big|^2dS
+ \int_{\partial\Omega} |\nabla_{\text{tan}}w|^2 dS
+ \int_{\partial\Omega} |g|^2 dS\bigg\}.
\end{equation*}
From Gronwall's inequality, it follows that
\begin{equation}\label{f:3.24}
\begin{aligned}
\max_{0\leq t\leq T}\|w\|_{L^2(\Omega)}
+ \|\nabla w\|_{L^2(\Omega_T)}
&\leq C\Big\{\big\|\frac{\partial w}{\partial \nu_0}\big\|_{L^2(S_T)}
+\big\|\nabla_{\text{tan}}w\big\|_{L^2(S_T)}
+\big\|g\big\|_{L^2(S_T)}\Big\} \\
&\leq C\|w\|_{H^{1,1/2}(S_T)}
= C\|g\|_{H^{1,1/2}(S_T)}
\end{aligned}
\end{equation}
where we use the fact that $\|(\partial w/\partial\nu_0)\|_{L^2(S_T)} + \|w\|_{L^2(S_T)}
\thickapprox \|w\|_{H^{1,1/2}(S_T)}$ (see \cite[Lemma 4.3.13]{SZW22}) in the second step.

Consequently, the desired estimate $\eqref{pri:3.5}$ follows from the estimates
$\eqref{f:3.23}$ and $\eqref{f:3.24}$, and we completed the proof.
\end{proof}

\begin{lemma}\label{lemma:3.2}
Suppose that $A$ satisfies $\eqref{c:1}$ and $\eqref{c:2}$. Assume
$F\in (L^2(\Omega_T))^d$, $g\in ({_0H^{1,1/2}(S_T)})^d$ and $h\in (H^1_0(\Omega))^d$. Let $u_0\in L^2(0,T;(H^1(\Omega))^d)$ be
the weak solution of $(\emph{DP}_0)$. Then we have the lateral-layer type estimate
\begin{equation}\label{pri:3.1}
\|\nabla u_0\|_{L^2((\Omega\setminus\Sigma_{2\varepsilon})_T)}
\leq C\varepsilon^{1/2}\Big\{\|F\|_{L^2(\Omega_T)}+\|g\|_{H^{1,1/2}(S_T)}
+ \|h\|_{H^1(\Omega)}\Big\},
\end{equation}
and time-layer type estimate
\begin{equation}\label{pri:3.2}
\sup_{\varepsilon^2<t<T}\Big(\int_{t-\varepsilon^2}^t
\int_{\Sigma_{2\varepsilon}}|\nabla u_0|^2 dxds\Big)^{\frac{1}{2}}
\leq
C\varepsilon^{1/2}\Big\{\|F\|_{L^2(\Omega_T)}+\|g\|_{H^{1,1/2}(S_T)}
+ \|h\|_{H^1(\Omega)}\Big\},
\end{equation}
and co-layer type estimates
\begin{equation}\label{pri:3.3}
\max\Big\{\|\nabla^2 u_0\|_{L^2(\Sigma_{4\varepsilon^2,2\varepsilon}^T)},
~\|\partial_t u_0\|_{L^2(\Sigma_{4\varepsilon^2,2\varepsilon}^T)}\Big\}
\leq C\varepsilon^{-1/2}\Big\{\|F\|_{L^2(\Omega_T)}+\|g\|_{H^{1,1/2}(S_T)}
+ \|h\|_{H^1(\Omega)}\Big\},
\end{equation}
where $C$ depends at most on $\mu_1,\mu_2,d,T$ and $\Omega$.
\end{lemma}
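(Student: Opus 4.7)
The plan is to exploit linearity of $(\text{DP}_0)$ to decompose $u_0 = v + \bar{w}$, where $v$ solves the system with the original $F$, $h$ and zero lateral data, while $\bar{w}$ solves the homogeneous equation with vanishing $F$, $h$ and lateral data $g$. Each of the three required estimates then splits into a $v$-contribution and a $\bar{w}$-contribution, which call for rather different techniques: global $W_2^{2,1}$ regularity for $v$, which vanishes on $S_T$, and Shen's parabolic nontangential maximal function estimates for $\bar{w}$, whose boundary trace is merely in $H^{1,1/2}$ over a Lipschitz cylinder.

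For $v$, the vanishing lateral and initial data permit an extension argument. I would embed $\Omega$ in a smooth domain $\tilde{\Omega}\supset\overline{\Omega}$, extend $F$ and $h$ by zero, and apply classical $W_2^{2,1}$ regularity on the smooth cylinder $\tilde{\Omega}\times(0,T]$ (see \cite{OAL}) to obtain an extension $\tilde{v}$ with
\begin{equation*}
\|\nabla^2\tilde{v}\|_{L^2}+\|\partial_t\tilde{v}\|_{L^2}+\sup_t\|\nabla\tilde{v}(\cdot,t)\|_{L^2}\le C(\|F\|_{L^2}+\|h\|_{H^1}),
\end{equation*}
the last piece coming from testing against $\partial_t\tilde{v}$. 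The residual $v-\tilde{v}|_{\Omega_T}$ solves a homogeneous problem on $\Omega_T$ with zero initial data and lateral data $\tilde{v}|_{S_T}\in H^{1,1/2}(S_T)$, and thus is absorbed into the $\bar{w}$ analysis below. Granting this, the co-layer estimate for $v$ is immediate ($O(1)\le O(\varepsilon^{-1/2})$); the time-layer estimate is stronger than required since $\sup_t\int_{t-\varepsilon^2}^t\int_{\Sigma_{2\varepsilon}}|\nabla v|^2\,dxds\le\varepsilon^2\sup_s\|\nabla v(\cdot,s)\|_{L^2}^2=O(\varepsilon^2)$; and the lateral-layer estimate follows from co-area $\|\nabla v\|_{L^2((\Omega\setminus\Sigma_{2\varepsilon})_T)}^2=\int_0^T\int_0^{2\varepsilon}\int_{S_r}|\nabla v|^2\,dS_r\,dr\,dt$ coupled with the uniform-in-$r$ trace bound $\|\nabla v(\cdot,t)\|_{L^2(S_r)}\le C\|\nabla v(\cdot,t)\|_{H^1(\Omega)}$ for $r\in[0,c_0]$.

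For $\bar{w}$, I would invoke the parabolic nontangential maximal function estimate (Theorem 4.2.1 of \cite{SZW22}),
\begin{equation*}
\|(\bar{w})^*\|_{L^2(S_T)}+\|(\nabla\bar{w})^*\|_{L^2(S_T)}\le C\|g\|_{H^{1,1/2}(S_T)}.
\end{equation*}
The lateral-layer estimate then follows via co-area and the pointwise domination $|\nabla\bar{w}(x,t)|\le(\nabla\bar{w})^*(x',t)$ with $x'\in\partial\Omega$ the nearest boundary point, yielding $\|\nabla\bar{w}\|_{L^2((\Omega\setminus\Sigma_{2\varepsilon})_T)}^2\le 2\varepsilon\|(\nabla\bar{w})^*\|_{L^2(S_T)}^2$. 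For the co-layer estimate, standard interior parabolic regularity handles $\Sigma_{c_0}\times(0,T)$ giving $O(1)$, while on the boundary shell $(\Sigma_{2\varepsilon}\setminus\Sigma_{c_0})\times(4\varepsilon^2,T-4\varepsilon^2)$ I would combine the interior pointwise bound $|\nabla^2\bar{w}(X)|^2+|\partial_t\bar{w}(X)|^2\le C\sigma(X)^{-2}\dashint_{P(X,\sigma(X)/4)\cap\Omega_T}|\nabla\bar{w}|^2$ (see \cite{SchW}) with the inclusion $P(X,\sigma(X)/4)\cap\Omega_T\subset\Upsilon(x',t)$; co-area reduces matters to $\int_{2\varepsilon}^{c_0}r^{-2}\,dr=O(\varepsilon^{-1})$ multiplied by $\|(\nabla\bar{w})^*\|_{L^2(S_T)}^2$, which produces the $O(\varepsilon^{-1/2})$ rate after a square root.

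The main obstacle is the time-layer estimate, which I would route through the Caccioppoli inequality (Lemma \ref{lemma:3.3}) applied to the full $u_0$: this reduces the required $O(\varepsilon)$ bound on $\int_{t-\varepsilon^2}^t\int_{\Sigma_{2\varepsilon}}|\nabla u_0|^2\,dxds$ to the three auxiliary estimates $\int_{t-\varepsilon^2}^t\int_{\Sigma_{2\varepsilon}\setminus\Sigma_{4\varepsilon}}|u_0|^2=O(\varepsilon^3)$, $\int_{t-\varepsilon^2}^t\int_{\Sigma_{2\varepsilon}}|u_0|^2=O(\varepsilon^2)$ and $\int_{t-\varepsilon^2}^t\int_{\Sigma_{2\varepsilon}}|\partial_t u_0|^2=O(1)$ indicated in the introduction. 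The $v$-contributions follow from $W_2^{2,1}$ regularity of $v$ together with Poincar\'{e}'s inequality (since $v|_{S_T}=0$, one gains an extra $\varepsilon$). For $\bar{w}$, the subtle inequalities \eqref{f:1.1}--\eqref{f:1.2} provide the key mechanism: the RHS depends on a free parameter $\xi$ ranging through a parabolic cone of aperture $O(\varepsilon^2)$ inside which $(\bar{w})^*(x',\xi)$ still dominates $|\bar{w}(x,s)|$, and an averaging of $\xi$ against the $L^2(S_T)$-norm of the maximal function recovers the expected powers of $\varepsilon$. This step is delicate because the $\varepsilon$-scale of the spatial shell matches exactly the $\varepsilon^2$-scale of the parabolic cone aperture, so no estimate can be lossy in either direction.
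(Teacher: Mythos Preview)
Your proposal is correct and follows essentially the same route as the paper: the same three-piece decomposition (your $\tilde v$ is the paper's $v$ on the extended smooth cylinder, and your residual-plus-$\bar w$ is the paper's $\bar w=w+z$), the same use of $W_2^{2,1}$ regularity and co-area/trace for the regular part, Shen's nontangential maximal function bounds for the homogeneous part, and the reduction of the time-layer estimate via the Caccioppoli inequality to the three auxiliary quantities handled through \eqref{f:1.1}--\eqref{f:1.2} and the averaging-in-$\xi$ trick. The only cosmetic difference is that the paper introduces the extended-domain solution from the outset rather than passing through an intermediate $v$ on $\Omega_T$, and it runs Caccioppoli on the full $u_0$ without first noting (as you do) that the $v$-contribution to the time layer is already $O(\varepsilon)$ directly from $\sup_t\|\nabla v(\cdot,t)\|_{L^2}$.
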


\begin{proof}
The main idea in the proofs is analogous to that applied to elliptic operators, and we refer the
reader to \cite{SZW12,QX2} for the original idea.
We first address the spatial-layer type estimate
$\eqref{pri:3.1}$. Due to the linearity of the equation $(\text{DP}_0)$, we may divide
the solution $u_0$ into three parts, which means $u_0 = v+w+z$ and $v,w,z$ satisfy
the following equations (i), (ii), (iii), respectively.
\begin{equation*}
(\text{i})\left\{\begin{aligned}
\partial_t v + \mathcal{L}_0(v) &= \tilde{F} &~&\text{in}~ \tilde{\Omega}_T,\\
v & = 0 &~& \text{on}~ \tilde{S}_T,\\
v & = \tilde{h} &~& \text{on}~ \tilde{\Omega}\times\{t=0\},
\end{aligned}\right.\\
\end{equation*}
where $\tilde{\Omega}_T = \tilde{\Omega}\times(0,T]$, and $\tilde{\Omega}\supsetneqq \Omega$
is a new domain with $C^2$ boundary, and $\tilde{S}_T=\partial\tilde{\Omega}\times(0,T]$ denotes
the lateral of $\tilde{\Omega}_T$.
Here $\tilde{F}$ is a 0-extension to $\tilde{\Omega}$ such that
$\tilde{F} = F$ in $\Omega_T$ and $\tilde{F} =0$ in $\tilde{\Omega}_T\setminus\Omega_T$,
while $\tilde{h}$ is a $H^1_0$-extension to $\tilde{\Omega}$ satisfying
$\tilde{h}=h$ in $\Omega$ and
$\|\tilde{h}\|_{H^1_0(\tilde{\Omega})}\leq C\|h\|_{H^1_0(\Omega)}$.
\begin{equation*}
(\text{ii}) \left\{\begin{aligned}
\partial_t w + \mathcal{L}_0(w) &= 0 &\quad&\text{in}\quad \Omega_T,\\
w & = g &~& \text{on}\quad S_T,\\
w & = 0 &~& \text{on}\quad \Omega\times\{t=0\},
\end{aligned}\right.
\qquad
(\text{iii})
\left\{\begin{aligned}
\partial_t z + \mathcal{L}_0(z) &= 0 &\quad&\text{in}\quad \Omega_T,\\
z & = -v &~& \text{on}\quad S_T,\\
z & = 0 &~& \text{on}\quad \Omega\times\{t=0\}.
\end{aligned}\right.
\end{equation*}
We note that the existences of $w$ and $v$ have be shown in \cite[Theorem 4.2.1]{SZW22}, and the
second equality in (ii), as well as in (iii), should be understood in the sense of nontangential convergence.

Concerning the equation (i),
it follows from the regularity of initial-Dirichlet problem (see \cite[Section 7.1.3]{LCE}) that
\begin{equation*}
\sup_{0\leq t\leq T}\int_{\tilde{\Omega}}|\nabla v|^2 dx
+\int_0^T\int_{\tilde{\Omega}}\big(|\partial_t v|^2+|\nabla^2 v|^2\big) dxdt
\leq C\bigg\{\int_0^T\int_{\tilde{\Omega}}|\tilde{F}|^2 dxdt
+ \int_{\tilde{\Omega}}|\nabla \tilde{h}|^2 dx\bigg\}
\end{equation*}
and this together with the energy estimate
\begin{equation*}
\sup_{0\leq t\leq T}\int_{\tilde{\Omega}}| v|^2 dx
+
\int_0^T\int_{\tilde{\Omega}}|\nabla v|^2 dxdt
\leq C\bigg\{\int_0^T\int_{\tilde{\Omega}}|\tilde{F}|^2 dxdt
+ \int_{\tilde{\Omega}}|\tilde{h}|^2 dx\bigg\}
\end{equation*}
gives the following estimate
\begin{equation}\label{f:3.7}
\begin{aligned}
&\|v\|_{W^{2,1}_2(\Omega_T)}
\leq C\Big\{\|\tilde{F}\|_{L^2(\tilde{\Omega}_T)}+\|\tilde{h}\|_{H^1(\tilde{\Omega})}\Big\}
\leq C\Big\{\|F\|_{L^2(\Omega_T)}+\|h\|_{H^1(\Omega)}\Big\},\\
&\qquad\qquad\sup_{0\leq t\leq T}\int_{\Omega}
\big(|\nabla v|^2 +|v|^2\big) dx
\leq C\Big\{\|F\|_{L^2(\Omega_T)}^2+\|h\|_{H^1(\Omega)}^2\Big\},
\end{aligned}
\end{equation}
where $C$ depends at most on $\mu,d,m, T,\Omega$ and $\tilde{\Omega}$.
Recalling the definition of $S_r$, for any $r\in[0,c_0]$ and $t>0$, it follows from the trace theorem that
\begin{equation*}
\begin{aligned}
\int_{S_r}|\nabla v(\cdot,t)|^2dS
&\leq C\bigg\{\int_{\Sigma_r}|\nabla^2 v(\cdot,t)|^2 dx
+ \int_{\Sigma_r}|\nabla v(\cdot,t)|^2 dx\bigg\} \\
&\leq  C\bigg\{\int_{\Omega}|\nabla^2 v(\cdot,t)|^2 dx
+ \int_{\Omega}|\nabla v(\cdot,t)|^2 dx\bigg\},
\end{aligned}
\end{equation*}
where $C$ is independent of $r$ and $t$. By the co-area formula, we have
\begin{equation*}
\int_{\Omega\setminus\Sigma_{2\varepsilon}} |\nabla v(\cdot,t)|^2 dx
=\int_0^{2\varepsilon}\int_{S_r}|\nabla v(\cdot,t)|^2 dS_r dr
\leq C\varepsilon\bigg\{\int_{\Omega}|\nabla^2 v(\cdot,t)|^2 dx
+ \int_{\Omega}|\nabla v(\cdot,t)|^2 dx\bigg\}.
\end{equation*}
Integrating from $0$ to $T$ with respect to the time variable and then
taking the square root, it holds
\begin{equation}\label{f:3.10}
\begin{aligned}
\Big(\int_0^T\int_{\Omega\setminus\Sigma_{2\varepsilon}} |\nabla v|^2 dxdt\Big)^{\frac{1}{2}}
&\leq C\varepsilon^{1/2}\big\|\nabla v\big\|_{L^{2}(0,T;H^1(\Omega))} \\
&\leq C\varepsilon^{1/2}
\Big\{\|F\|_{L^2(\Omega_T)}+\|h\|_{H^1(\Omega)}\Big\},
\end{aligned}
\end{equation}
where we use the estimate $\eqref{f:3.7}$ in the last inequality.

We now proceed to study the equations (ii) and (iii).
Due to the work of Z. Shen's, it is well known that
\begin{equation}\label{f:3.8}
 \big\|\frac{\partial z}{\partial \nu_0}\big\|_{L^2(S_T)} + \|z\|_{L^2(S_T)}
 \thickapprox
 \big\|z\|_{H^{1,1/2}(S_T)}
\end{equation}
(see \cite[Lemma 4.3.13]{SZW22}), which may be derived from the so-called Rellich identity. By
the way, the original work
in the case of $\mathcal{L}_0 = -\Delta$
traces back to R. Brown (see \cite[Section 3]{RB}).
Hence, on account of \cite[Theorem 4.2.1]{SZW22} we have
\begin{equation}\label{f:3.9}
\begin{aligned}
\|(\nabla z)^*\|_{L^2(S_T)}
&+ \|(z)^*\|_{L^2(S_T)}
+ \|(\nabla w)^*\|_{L^2(S_T)}
+ \|(w)^*\|_{L^2(S_T)} \\
&\leq C\Big\{\|z\|_{H^{1,1/2}(S_T)}+\|g\|_{H^{1,1/2}(S_T)}\Big\}\\
&\leq C\Big\{\|\nabla v\|_{L^{2}(S_T)}
+ \|v\|_{L^{2}(S_T)}  +\|g\|_{H^{1,1/2}(S_T)}\Big\},
\end{aligned}
\end{equation}
where we employ the equivalence $\eqref{f:3.8}$ in the second inequality.
From the trace theorem in space variable, it is not hard to see that
\begin{equation*}
\begin{aligned}
\|\nabla v\|_{L^{2}(S_T)} + \|v\|_{L^{2}(S_T)}
&\leq C\Big\{\|\nabla v\|_{L^{2}(0,T;H^1(\Omega))} + \|v\|_{L^2(\Omega_T)}\Big\} \\
&\leq C\Big\{\|\nabla v\|_{L^{2}(0,T;H^1(\Omega))}
+ \sup_{0\leq t\leq T}\|v\|_{L^2(\Omega)}\Big\}
\leq C\Big\{\|F\|_{L^2(\Omega_T)}+\|h\|_{H^1(\Omega)}\Big\},
\end{aligned}
\end{equation*}
where we use the estimate $\eqref{f:3.7}$ in the last step. Thus, the third line of
$\eqref{f:3.9}$ will be controlled by
\begin{equation*}
C\Big\{\|F\|_{L^2(\Omega_T)}+\|h\|_{H^1(\Omega)} + \|g\|_{H^{1,1/2}(S_T)}\Big\}.
\end{equation*}

For the ease of the statement, let $\bar{w} = w+z$. Then we have known $u=v+\bar{w}$ and
\begin{equation}\label{f:3.13}
\|(\nabla \bar{w})^*\|_{L^2(S_T)}+\|(\bar{w})^*\|_{L^2(S_T)}
\leq C\Big\{\|F\|_{L^2(\Omega_T)}^2+\|h\|_{H^1(\Omega)}^2 + \|g\|_{H^{1,1/2}(S_T)}\Big\},
\end{equation}
and this implies
\begin{equation}\label{f:3.11}
\begin{aligned}
\Big(\int_0^T\int_{\Omega\setminus\Sigma_{2\varepsilon}}
|\nabla \bar{w}|^2 dxdt\Big)^{\frac{1}{2}}
&\leq C\varepsilon^{1/2}\big\|(\nabla\bar{w})^*\big\|_{L^2(S_T)} \\
&\leq C\varepsilon^{1/2}
\Big\{\|F\|_{L^2(\Omega_T)}+\|h\|_{H^1(\Omega)}
+ \|g\|_{H^{1,1/2}(S_T)}\Big\}.
\end{aligned}
\end{equation}

Thus, combining the estimates
$\eqref{f:3.10}$ and $\eqref{f:3.11}$,
we have proved the spatial-layer type estimate $\eqref{pri:3.1}$.

We now turn to investigate the time-layer type estimate $\eqref{pri:3.2}$.
Due to the estimate $\eqref{pri:3.4}$, the problem is reduced to estimate the following terms:
\begin{equation}\label{f:3.12}
\int_{t-\varepsilon^2}^t\int_{\Sigma_{2\varepsilon}\setminus\Sigma_{4\varepsilon}}|u_0|^2 dxds,
\qquad
\int_{t-\varepsilon^2}^t\int_{\Sigma_{2\varepsilon}}
|u_0|^2 dxds
\quad
\text{and}\quad
\int_{t-\varepsilon^2}^t\int_{\Sigma_{2\varepsilon}}
|\partial_t u_0|^2 dxds.
\end{equation}

The easiest one is
\begin{equation}\label{f:3.16}
\int_{t-\varepsilon^2}^t\int_{\Sigma_{2\varepsilon}}
|u_0|^2 dxds
\leq C\varepsilon^2\Big\{\|F\|_{L^2(\Omega_T)}+\|h\|_{H^1(\Omega)}
+ \|g\|_{H^{1,1/2}(S_T)}\Big\}
\end{equation}
where we employ the estimate $\eqref{pri:3.5}$. Then we address the first term
in $\eqref{f:3.12}$. Recalling $u_0 = v+\bar{w}$, there exists
$\xi\in(t-\varepsilon,t)$ such that
\begin{equation*}
\begin{aligned}
\int_{t-\varepsilon^2}^t\int_{\Sigma_{2\varepsilon}\setminus\Sigma_{4\varepsilon}}|u_0|^2 dxds
&\leq \int_{t-\varepsilon^2}^t\int_{\Omega\setminus\Sigma_{4\varepsilon}}|v|^2 dxds
+  \int_{t-\varepsilon^2}^t\int_{\Sigma_{2\varepsilon}\setminus\Sigma_{4\varepsilon}}
|\bar{w}|^2 dxds \\
&\leq C\varepsilon\int_{t-\varepsilon^2}^t\int_{\Omega}(|v|^2+|\nabla v|^2) dxds
+ C\varepsilon^3 \int_{\partial\Omega}|(\bar{w})^*(\cdot,\xi)|^2 dS \\
&\leq C\varepsilon^3\bigg\{\sup_{0\leq t\leq T}
\int_{\Omega}\big(|v|^2+|\nabla v|^2\big)dx
+ \int_{\partial\Omega}|(\bar{w})^*(\cdot,\xi)|^2dS\bigg\},
\end{aligned}
\end{equation*}
where we use the trace theorem for $v$ and the definition of the maximal function of $\bar{w}$
in the second inequality. Then integrating both sides of the above inequality with respect to
$\xi$ from $0$ to $T$, we have
\begin{equation}\label{f:3.17}
\begin{aligned}
\int_{t-\varepsilon^2}^t\int_{\Sigma_{2\varepsilon}\setminus\Sigma_{4\varepsilon}}|u_0|^2 dxds
\leq C\varepsilon^3
\Big\{\|F\|_{L^2(\Omega_T)}+\|h\|_{H^1(\Omega)}
+ \|g\|_{H^{1,1/2}(S_T)}\Big\},
\end{aligned}
\end{equation}
where we use the estimates $\eqref{f:3.7}$ and $\eqref{f:3.13}$.

Now, we focus on the last term in $\eqref{f:3.12}$. Noting that
$\partial_t u_0 + \mathcal{L}_0(u_0) = F$ in $\Omega_T$, we acquire
\begin{equation}\label{f:3.14}
\int_{t-\varepsilon^2}^t\int_{\Sigma_{2\varepsilon}}
|\partial_t u_0|^2 dxds
\leq C\bigg\{\int_{t-\varepsilon^2}^t\int_{\Sigma_{2\varepsilon}}
|\nabla^2 u_0|^2 dxds
+ \int_{0}^T\int_{\Omega}
|F|^2 dxds
\bigg\}.
\end{equation}
It suffices to estimate the first term in the right-hand side of $\eqref{f:3.14}$. Since using
$\eqref{f:3.7}$ leads to
\begin{equation}\label{f:3.15}
\begin{aligned}
\int_{t-\varepsilon^2}^t\int_{\Sigma_{2\varepsilon}}
|\nabla^2 u_0|^2 dxds
&\leq \int_{0}^T\int_{\Omega}
|\nabla^2 v|^2 dxds
+ \int_{t-\varepsilon^2}^t\int_{\Sigma_{2\varepsilon}}
|\nabla^2 \bar{w}|^2 dxds \\
&\leq C\Big\{\|F\|_{L^2(\Omega_T)}+\|h\|_{H^1(\Omega)}\Big\}
+\int_{t-\varepsilon^2}^t\int_{\Sigma_{2\varepsilon}}
|\nabla^2 \bar{w}|^2 dxds,
\end{aligned}
\end{equation}
the problem is reduced to estimate the last term of $\eqref{f:3.15}$.
And we obtain
\begin{equation*}
\begin{aligned}
\int_{t-\varepsilon^2}^t\int_{\Sigma_{2\varepsilon}}
|\nabla^2 \bar{w}|^2 dxds
&\leq \int_{t-\varepsilon^2}^t\int_{\Sigma_{2\varepsilon}\setminus\Sigma_{c_0}}
|\nabla^2 \bar{w}|^2 dxds
+ \int_{0}^{T}\int_{\Sigma_{c_0}}
|\nabla^2 \bar{w}|^2 dxds\\
&\leq C\bigg\{\varepsilon^{-2}
\int_{t-\frac{3}{2}\varepsilon^2}^{t+\frac{1}{2}\varepsilon^2}
\int_{\Sigma_{\varepsilon}\setminus\Sigma_{2c_0}}
|\nabla \bar{w}|^2 dxds
+ \int_{0}^T\int_{\Omega}
|\nabla \bar{w}|^2 dxds\bigg\}\\
&\leq C\bigg\{
\int_{\partial\Omega}
|(\nabla \bar{w})^*(\cdot,\xi)|^2 dS
+ \|g\|_{H^{1,1/2}(S_T)}^2\bigg\}
\end{aligned}
\end{equation*}
where $\xi\in (t-\varepsilon^2,t)$, and we use the interior estimates \cite[Lemma 1]{SchW} and $\eqref{pri:3.6}$ in the second step,
and the last one follows from the definition of nontangential maximal function
and the estimate $\eqref{pri:3.5}$.
By integrating with respect to $\xi$ from $0$ to $T$, it is shown that
\begin{equation*}
\begin{aligned}
\int_{t-\varepsilon^2}^t\int_{\Sigma_{2\varepsilon}}
|\nabla^2 \bar{w}|^2 dxds
\leq C\bigg\{
\int_0^T\int_{\partial\Omega}
|(\nabla \bar{w})^*(\cdot,\xi)|^2 dSd\xi
+ \|g\|_{H^{1,1/2}(S_T)}^2\bigg\}
\leq C\|g\|_{H^{1,1/2}(S_T)}^2,
\end{aligned}
\end{equation*}
and this together with the estimates $\eqref{f:3.14}$ and $\eqref{f:3.15}$ gives
\begin{equation}\label{f:3.18}
\int_{t-\varepsilon^2}^t\int_{\Sigma_{2\varepsilon}}
|\partial_t u_0|^2 dxds
\leq C\Big\{\|F\|_{L^2(\Omega_T)}+\|h\|_{H^1(\Omega)}
+ \|g\|_{H^{1,1/2}(S_T)}\Big\}.
\end{equation}

Hence, plugging the estimates $\eqref{f:3.16}, \eqref{f:3.17}$ and \eqref{f:3.18} back into
the estimate $\eqref{pri:3.4}$, we obtain
\begin{equation}
\sup_{\varepsilon^2< t< T}\int_{t-\varepsilon^2}^t
\int_{\Sigma_{2\varepsilon}}|\nabla u_0|^2 dx ds
\leq C\varepsilon\Big\{\|F\|_{L^2(\Omega_T)}+\|h\|_{H^1(\Omega)}
+ \|g\|_{H^{1,1/2}(S_T)}\Big\}
\end{equation}
and this verifies the desired time-layer-type estimate $\eqref{pri:3.2}$.

The rest of the proof is devoted to the so-called co-layer type estimate $\eqref{pri:3.3}$.
Since $\partial_t u_0 + \mathcal{L}_0(u_0) = F$ in $\Omega_T$, it is sufficient to prove the
estimate $\eqref{pri:3.3}$ for the quantity
\begin{equation*}
\int_{4\varepsilon^2}^{T-4\varepsilon^2}\int_{\Sigma_{2\varepsilon}}
|\nabla^2 u_0|^2 dxdt.
\end{equation*}
In view of $u_0 = v+\bar{w}$, the above one could be controlled by
\begin{equation}\label{f:3.19}
\begin{aligned}
&\qquad\qquad\qquad\qquad\qquad \int_{0}^{T}\int_{\tilde{\Omega}}
|\nabla^2 v|^2 dxdt
+ \int_{4\varepsilon^2}^{T-4\varepsilon^2}\int_{\Sigma_{2\varepsilon}}
|\nabla^2 \bar{w}|^2 dxdt\\
&\leq C\Big\{\|F\|_{L^2(\Omega_T)}+\|h\|_{H^1(\Omega)}\Big\}
+\underbrace{
\int_{4\varepsilon^2}^{T-4\varepsilon^2}\int_{\Sigma_{2\varepsilon}\setminus\Sigma_{c_0}}
|\nabla^2 \bar{w}|^2 dxdt}_{I_1}
+\underbrace{\int_{0}^{T}\int_{\Sigma_{c_0}}
|\nabla^2 \bar{w}|^2 dxdt}_{I_2}
\end{aligned}
\end{equation}
where we use the estimate $\eqref{f:3.7}$. From the interior estimate
$\eqref{pri:3.6}$
and the global estimate $\eqref{pri:3.5}$, it follows that
\begin{equation}\label{f:3.20}
I_2 \leq C\int_{0}^{T}\int_{\Omega}
|\nabla \bar{w}|^2 dxdt \leq C\|g\|_{H^{1,1/2}(S_T)}.
\end{equation}
We proceed to estimate $I_1$ by the method analogous to that used above.
However, we have to, in advance, remove one more order derivative
from $\nabla^2\bar{w}$, carefully. Due to the interior regularity estimate \cite[Lemma 1]{SchW} and
the Sobolev imbedding theorem (see \cite[pp.1148-1149]{SchW}),
there holds
\begin{equation}\label{f:3.30}
\big|\nabla^2 \bar{w}(X)\big|^2 \leq
\frac{C}{[\sigma(X)]^2}\dashint_{P(X,\sigma(X)/4)}|\nabla \bar{w}(Y)|^2 dY
\end{equation}
for any $X=(x,t)\in (\Sigma_{2\varepsilon}\setminus\Sigma_{c_0})\times(4\varepsilon^2,T-4\varepsilon^2)$,
where $\sigma(X) = \text{dist}(X,S_T)$.
We remark that the existence of $\bar{w}$ in fact comes from layer potential theory
concerning parabolic equations, the key idea from R. Brown \cite{RB} is that extending $\bar{w}$ to
a caloric function which is still caloric on $\Omega\times\mathbb{R}$. Roughly speaking,
since the estimate $\eqref{f:3.30}$ is just an interior estimate and the extension of $\bar{w}$
is still determined by given data $F,g$ and $h$,
here we may regard $\bar{w}$ as being a solution of $\partial_t\bar{w} + \mathcal{L}_0(\bar{w}) = 0$ in $\Omega\times \mathbb{R}$.

Hence, in view of the co-area formula, we obtain
\begin{equation}\label{f:3.21}
\begin{aligned}
I_2
&\leq C\int_{4\varepsilon^2}^{T-4\varepsilon}
\int_{\Sigma_{2\varepsilon}\setminus\Sigma{c_0}}
\frac{1}{[\sigma(X)]^2}
\dashint_{P(Y,\delta(P)/4)}|\nabla\bar{w}|^2 dY dX \\
&\leq C\int_0^T\int_{\partial\Omega}|(\nabla\bar{w})^*(\cdot,t)|^2 dSdt
\int_{2\varepsilon}^{c_0}\frac{dr}{r^2}\\
&\leq \frac{C}{\varepsilon}\int_{S_T}|(\nabla\bar{w})^*|^2 dSdt
\leq C\varepsilon^{-1}\Big\{
\|F\|_{L^2(\Omega_T)}
+ \|h\|_{H^1(\Omega)}+ \|g\|_{H^{1,1/2}(S_T)}^2\Big\},
\end{aligned}
\end{equation}
where we use the estimate $\eqref{f:3.13}$ in the last step. Combining the estimates
$\eqref{f:3.19}$, $\eqref{f:3.20}$ and $\eqref{f:3.21}$ gives
\begin{equation*}
 \bigg(\int_{4\varepsilon^2}^{T-4\varepsilon^2}
 \int_{\Sigma_{2\varepsilon}}|\nabla^2 u_0|^2 dxds\bigg)^{1/2}
 \leq C\varepsilon^{-1/2}\Big\{\|F\|_{L^2(\Omega_T)}
+ \|h\|_{H^1(\Omega)} +\|g\|_{H^{1,\frac{1}{2}}(S_T)} \Big\}.
\end{equation*}

   Until now, we have proved the desired estimate $\eqref{pri:3.3}$, and the whole proof is
complete.
\end{proof}

\begin{flushleft}
\textbf{Proof of Theorem $\ref{thm:3.1}$.}
The desired estimate $\eqref{pri:3.0}$ follows from
Lemmas $\ref{lemma:3.1}$ and $\ref{lemma:3.2}$.
\qed
\end{flushleft}

\section{Convergence rates in $L^2(\Omega_T)$}\label{section:4}

In order to accelerate the convergence rate, we shall employ the so-called duality methods. To do so, we first consider the adjoint initial-Dirichlet problems: given $\Phi\in (L^2(\Omega_T))^d$, let
$\phi_\varepsilon$ and $\phi_0$ be the weak solution to
\begin{equation*}
(\text{DP}_\varepsilon^*)\left\{\begin{aligned}
-\frac{\partial \phi_\varepsilon}{\partial t} + \mathcal{L}_\varepsilon^*(\phi_\varepsilon)
& = \Phi &~&\text{in}~ ~\Omega_T,\\
\phi_\varepsilon & = 0 &~&\text{on}~ S_T,\\
\phi_\varepsilon & = 0 &~&\text{on}~ \Omega\times\{t=T\},
\end{aligned}\right.
\quad
(\text{DP}_0^*)\left\{\begin{aligned}
-\frac{\partial \phi_0}{\partial t} + \mathcal{L}_\varepsilon^*(\phi_0)
& = \Phi &~&\text{in}~ ~\Omega_T,\\
\phi_0 & = 0 &~&\text{on}~ S_T, \\
\phi_0 & = 0 &~&\text{on}~ \Omega\times\{t=T\},
\end{aligned}\right.
\end{equation*}
respectively. Here $\mathcal{L}_\varepsilon^*$ is the adjoint operator of $\mathcal{L}_\varepsilon$.
By the symmetry condition $\eqref{c:2}$, let
$\tilde{\phi}_\varepsilon(x,t)=\phi_\varepsilon(x,T-t)$ and
$\tilde{\phi}_0(x,t)=\phi_0(x,T-t)$, which exactly right solve the initial-Dirichlet problems
\begin{equation*}
\left\{\begin{aligned}
\frac{\partial \tilde{\phi}_\varepsilon^\alpha}{\partial t}
-\frac{\partial}{\partial x_i}\Big\{a_{ij}^{\alpha\beta}\Big(\frac{x}{\varepsilon},
\frac{T-t}{\varepsilon^2}\Big)\frac{\partial \tilde{\phi}_\varepsilon^\beta}{\partial x_j}\Big\}
& = \Phi^\alpha &~&\text{in}~ ~\Omega_T,\\
\tilde{\phi}_\varepsilon & = 0 &~&\text{on}~ \partial_p\Omega_T
\end{aligned}\right.
\quad
\text{and}
\quad
\left\{\begin{aligned}
\frac{\partial \tilde{\phi}_0}{\partial t} + \mathcal{L}_0(\tilde{\phi}_0)
& = \Phi &~&\text{in}~ ~\Omega_T,\\
\tilde{\phi}_0 & = 0 &~&\text{on}~ \partial_p\Omega_T.
\end{aligned}\right.
\end{equation*}
We mention that $\chi_T^*$ and
$E_{T,l(d+1)j}^{*}$ are corresponding correctors and dual correctors
associated with $A(x/\varepsilon,(T-t)/\varepsilon^2)$, respectively.
It is clear to see that the adjoint problems will obey Theorems $\ref{thm:2.1}$, $\ref{thm:3.1}$ as well.

\begin{lemma}[Duality lemma]\label{lemma:4.3}
Let $w_\varepsilon$ be given in $\eqref{pri:3.1}$ by choosing
$\varphi_j^\gamma = S_\varepsilon
K_\varepsilon(\Psi_{[4\varepsilon^2,2\varepsilon]}\nabla_ju_0^\gamma)$,
where $u_\varepsilon,u_0$ are weak solution of $(\emph{PD}_\varepsilon)$ and
$(\emph{PD}_0)$, respectively. For any $\Phi\in (L^2(\Omega_T))^d$, we assume that
$\phi_\varepsilon,\phi_0\in L^2(0,T;(H^1_0(\Omega))^d)$ with
$\partial_t\phi_\varepsilon,\partial_t\phi_0\in L^2(0,T;(H^{-1}(\Omega))^d)$ are the weak solutions to
the adjoint problems $(\emph{PD}_\varepsilon^*)$ and $(\emph{PD}_0^*)$, respectively. Then we have
\begin{equation}\label{eq:4.1}
\int_{\Omega_T} w_\varepsilon \Phi dx dt
= -\int_{\Omega_T} \tilde{f}\cdot\nabla\phi_\varepsilon dx dt,
\end{equation}
where $\tilde{f}$ is shown in $\eqref{eq:3.3}$. Moreover, if we assume
\begin{equation}\label{eq:4.2}
\begin{aligned}
&\breve{w}_\varepsilon(x,t) =
\tilde{\phi}_\varepsilon - \tilde{\phi}_0
-\varepsilon\chi^{*}_{T}(x/\varepsilon,t/\varepsilon^2)S_\varepsilon K_\varepsilon(
\Psi_{[100\varepsilon^2,10\varepsilon]}\nabla\tilde{\phi}_0) \\
&\qquad\qquad\qquad
-\varepsilon^2 E_{T,l(d+1)j}^{*}(x/\varepsilon,t/\varepsilon^2)
\frac{\partial}{\partial x_l}
S_\varepsilon K_\varepsilon(
\Psi_{[100\varepsilon^2,10\varepsilon]}\nabla_j\tilde{\phi}_0),
\end{aligned}
\end{equation}
then there holds
\begin{equation}\label{pri:4.1}
\begin{aligned}
\bigg|\int_{\Omega_T} w_\varepsilon
&\Phi dxdt\bigg|
\leq C\bigg\{\|\nabla u_0\|_{L^2(\boxbox_{2\varepsilon})}
+\varepsilon\|\nabla^2 u_0\|_{L^2(\Sigma_{4\varepsilon^2,2\varepsilon}^T)}
+\varepsilon\|\partial_t u_0\|_{L^2(\Sigma_{4\varepsilon^2,2\varepsilon}^T)}\bigg\}\\
&\times
\bigg\{\|\nabla\breve{w}_\varepsilon\|_{L^2(\Omega_T)}
+ \|\nabla\phi_0\|_{L^2(\boxbox_{2\varepsilon})}
+ \varepsilon\|\nabla^2\phi_0\|_{L^2(\Sigma_{4\varepsilon^2,2\varepsilon}^T)}\bigg\}\\
& + C\big\|\nabla\phi_0\big\|_{L^2(\Sigma_{4\varepsilon^2,2\varepsilon}^T;\delta^{-1})}
 \Bigg\{
\|\nabla u_0\|_{L^2(\boxbox_{2\varepsilon};\delta)}
+\varepsilon\|\nabla^2 u_0\|_{L^2(\Sigma_{4\varepsilon^2,2\varepsilon}^T;\delta)}
+\varepsilon\|\partial_t u_0\|_{L^2(\Sigma_{4\varepsilon^2,2\varepsilon}^T;\delta)}\Bigg\},\\
\end{aligned}
\end{equation}
where $C$ depends on
$\mu_1,\mu_2,d,T$ and $\Omega$.
\end{lemma}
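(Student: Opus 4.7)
The plan is to prove the duality identity $\eqref{eq:4.1}$ first, and then to derive $\eqref{pri:4.1}$ by substituting the corrector expansion $\eqref{eq:4.2}$ into the identity and exploiting the divergence-form structure of $\tilde{f}$ together with the weighted inequalities of Section~$\ref{section:2}$. For $\eqref{eq:4.1}$, the argument is standard: I would test the equation $\eqref{eq:3.2}$ for $w_\varepsilon$ against $\phi_\varepsilon$ and the adjoint equation $(\mathrm{DP}_\varepsilon^*)$ for $\phi_\varepsilon$ against $w_\varepsilon$, integrate by parts in $x$ and $t$, and subtract. The spatial-boundary contributions vanish because both $w_\varepsilon$ and $\phi_\varepsilon$ are zero on $S_T$; the time-boundary contributions vanish because $w_\varepsilon|_{t=0}=0$ and $\phi_\varepsilon|_{t=T}=0$. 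The surviving expression is precisely $\int_{\Omega_T}w_\varepsilon\Phi = -\int_{\Omega_T}\tilde{f}\cdot\nabla\phi_\varepsilon$, with all integrability supplied by the $L^2$ theory in Theorem~$\ref{thm:2.1}$.

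For the bilinear bound $\eqref{pri:4.1}$, the strategy is to insert the corrector expansion $\eqref{eq:4.2}$ (after the time-reversal relating $\tilde\phi_\varepsilon$ to $\phi_\varepsilon$) so that $\nabla\phi_\varepsilon = \nabla\phi_0 + \nabla\breve{w}_\varepsilon + (\text{oscillating corrector terms of order }\varepsilon,\varepsilon^2)$, and to split the right-hand side of $\eqref{eq:4.1}$ accordingly. The error term $\int\tilde{f}\cdot\nabla\breve{w}_\varepsilon$ is controlled by Cauchy--Schwarz against $\|\nabla\breve{w}_\varepsilon\|_{L^2(\Omega_T)}$; the companion factor $\|\tilde{f}\|_{L^2(\Omega_T)}$ matches the first bracket of $\eqref{pri:4.1}$ via the same decomposition of $\tilde{f}$ that already appeared in the proof of Lemma~$\ref{lemma:3.1}$. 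Upon differentiating, the two corrector pieces yield a principal term of the form $\chi^*_T(\cdot/\varepsilon,\cdot/\varepsilon^2)\nabla S_\varepsilon K_\varepsilon(\Psi_{[100\varepsilon^2,10\varepsilon]}\nabla\phi_0)$ plus lower-order tails carrying a single $\varepsilon$ or $\varepsilon^2$, and applying the $L^p$-bounds $\eqref{pri:2.1}$ and $\eqref{pri:2.17}$ converts them into the quantity $\|\nabla\phi_0\|_{L^2(\boxbox_{2\varepsilon})}+\varepsilon\|\nabla^2\phi_0\|_{L^2(\Sigma_{4\varepsilon^2,2\varepsilon}^T)}$, completing the second bracket.

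The delicate piece is the principal contribution $-\int\tilde{f}\cdot\nabla\phi_0$. For the leading component of $\tilde{f}$ from $\eqref{eq:3.3}$, namely $[a^{\alpha\beta}_{ij}-\widehat{a}^{\alpha\beta}_{ij}][\nabla_j u_0^\beta - \varphi_j^\beta]$, I would rewrite $\widehat{a}-a$ in terms of the flux corrector $b^{\alpha\beta}_{ij}$ of Lemma~$\ref{lemma:2.3}$ and then invoke the antisymmetric representation $b^{\alpha\beta}_{ij}=\sum_k\partial_{y_k}E^{\alpha\beta}_{kij}$ with $E^{\alpha\beta}_{kij}=-E^{\alpha\beta}_{ikj}$. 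A further integration by parts transfers one derivative onto $\nabla\phi_0$ while the antisymmetry cancels the otherwise dangerous top-order term, leaving a structure suitable for the weighted estimates. On $\boxbox_{2\varepsilon}$ where $\Psi_{[4\varepsilon^2,2\varepsilon]}\equiv 0$, the quantity $\varphi$ vanishes and the pairing with $\nabla\phi_0$ yields the unweighted contribution; on $\Sigma^T_{4\varepsilon^2,2\varepsilon}$ where $\Psi\equiv 1$, I would split the pairing by Cauchy--Schwarz with weights $\delta^{1/2}$ and $\delta^{-1/2}$ and invoke Lemmas~$\ref{lemma:2.5}$ and $\ref{lemma:2.6}$ for the $\delta$-weighted factor and Lemma~$\ref{lemma:2.1}$ for the $\delta^{-1}$-weighted one. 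This generates precisely the third bracket of $\eqref{pri:4.1}$.

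The main obstacle will be the bookkeeping: the antisymmetry $E^{\alpha\beta}_{kij}=-E^{\alpha\beta}_{ikj}$ must be used to cancel the leading order-one term arising after integration by parts before the weighted inequalities can be applied, and one has to match each order-$\varepsilon^k$ piece of $\tilde{f}$ against the corresponding piece of $\nabla\phi_\varepsilon$ so that no uncontrolled $\partial_t$ or $\nabla^3$ of $\phi_0$ survives. The second subtle point is the separation of the weights $\delta^{\pm 1}$ through the smoothing operators $S_\varepsilon K_\varepsilon$, which is exactly the commutation supplied by Lemmas~$\ref{lemma:2.1}$ and $\ref{lemma:2.5}$; without this, the weighted pairing cannot be cleanly decoupled, and it is precisely this maneuver that later yields the logarithmic improvement in the proof of Theorem~$\ref{thm:1.1}$.
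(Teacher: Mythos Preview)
Your treatment of the identity $\eqref{eq:4.1}$ and the overall architecture for $\eqref{pri:4.1}$---substitute the expansion $\eqref{eq:4.2}$ for $\nabla\phi_\varepsilon$, peel off $\nabla\breve{w}_\varepsilon$ by Cauchy--Schwarz, and separate the remaining $\nabla\phi_0$ contribution with a $\delta^{\pm 1}$--weighted pairing---matches the paper. Where you diverge is in the handling of the leading piece of $\tilde{f}$, namely $[a_{ij}-\widehat a_{ij}]\big[\nabla_j u_0-\varphi_j\big]$ paired with $\nabla_i\phi_0$, and this is where a misconception creeps in.

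You propose to rewrite $\widehat a-a$ through the flux quantity $b$ and then integrate by parts once more using the antisymmetric representation $b=\partial_y E$, arguing that the antisymmetry cancels a ``dangerous top-order term.'' Two remarks. First, $\widehat a_{ij}-a_{ij}$ is \emph{not} $b_{ij}$: by $\eqref{eq:2.1}$ one has $b_{ij}=\widehat a_{ij}-a_{ij}-a_{ik}\partial_{y_k}\chi_j$, so your rewriting leaves an extra $a\nabla\chi$ term that must be treated separately anyway. Second, and more importantly, no further integration by parts is needed here, because there is no dangerous term to cancel: the factor $a-\widehat a$ is bounded, so the whole first component of $\tilde f$ is already controlled by $\big\|\nabla u_0-S_\varepsilon K_\varepsilon(\Psi_{[4\varepsilon^2,2\varepsilon]}\nabla u_0)\big\|$, and this difference is small directly from the smoothing estimates $\eqref{pri:2.13}$, $\eqref{pri:2.14}$ (unweighted) and $\eqref{pri:2.10}$, $\eqref{pri:2.12}$ (weighted). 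The antisymmetry of $E$ was already fully exploited in Lemma~$\ref{lemma:3.4}$ to obtain the form $\eqref{eq:3.3}$; the proof of Lemma~$\ref{lemma:4.3}$ uses only Cauchy--Schwarz (with and without the weight $\delta$) and the smoothing bounds of Section~$\ref{section:2}$.

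Concretely, the paper splits $\nabla\phi_\varepsilon$ into two packages $\mathcal R_{\varepsilon,1}$ (containing $\nabla\breve w_\varepsilon$ and the $\varepsilon$-order corrector tails) and $\mathcal R_{\varepsilon,2}$ (containing $\nabla\phi_0$ and the principal corrector term $\nabla\chi^*_T\cdot S_\varepsilon K_\varepsilon(\Psi\nabla\phi_0)$). Each of the four pieces of $\tilde f$ in $\eqref{eq:3.3}$ is then paired with $\mathcal R_{\varepsilon,1}$ in unweighted $L^2$ and with $\mathcal R_{\varepsilon,2}$ in $L^2(\delta)\times L^2(\delta^{-1})$. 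Your proposal already contains this split; if you simply drop the extra integration-by-parts step and instead bound the first term of $\tilde f$ via Lemmas~$\ref{lemma:2.4}$, $\ref{lemma:2.5}$ and $\ref{lemma:2.6}$, you recover the paper's argument.
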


\begin{proof}
First, it is not hard to see that the equality $\eqref{eq:4.1}$ follows from integrating by parts
\begin{equation*}
\begin{aligned}
\int_{\Omega_T}w_\varepsilon \Phi dxdt
&= \int_0^T \big<w_\varepsilon,(-\partial_t+\mathcal{L}_\varepsilon^*)\phi_\varepsilon\big>dt \\
&= \int_0^T \big<(\partial_t+\mathcal{L}_\varepsilon)w_\varepsilon,\phi_\varepsilon\big>dt
+ \int_\Omega w_\varepsilon(x,T)\phi_\varepsilon(x,T)dx
- \int_\Omega w_\varepsilon(x,0)\phi_\varepsilon(x,0)dx \\
&= - \int_{\Omega_T}\tilde{f}\cdot\nabla\phi_\varepsilon dxdt,
\end{aligned}
\end{equation*}
where $w_\varepsilon,\phi_\varepsilon$ are weak solutions of $\eqref{eq:3.2}$ and $(\text{DP}_\varepsilon^*)$, respectively, and we employ the initial-boundary conditions
$w_\varepsilon = \phi_\varepsilon = 0$ on $S_T$ in the second step, and
$w_\varepsilon(x,0) = \phi_\varepsilon(x,T) = 0$ in the last one.

Let $\varpi$ denote its periodic parts for simplicity of
presentation. Thus, observing $\eqref{eq:3.3}$ we have
\begin{equation}
\begin{aligned}
\bigg|\int_{\Omega_T}w_\varepsilon \Phi dxdt\bigg|
&\leq C\int_{\Omega_T}
\big|\nabla u_0 - S_\varepsilon K_\varepsilon(\Psi_{[4\varepsilon^2,2\varepsilon]}\nabla u_0)\big|
\big|\nabla\phi_\varepsilon\big| dxdt\\
& +\varepsilon\int_{\Omega_T}
\big|\varpi(x/\varepsilon,t/\varepsilon^2)
\nabla S_\varepsilon K_\varepsilon(\Psi_{[4\varepsilon^2,2\varepsilon]}\nabla u_0)\big|
\big|\nabla\phi_\varepsilon\big| dxdt \\
& +\varepsilon^2
\int_{\Omega_T}
\big|\varpi(x/\varepsilon,t/\varepsilon^2)
\nabla^2 S_\varepsilon K_\varepsilon(\Psi_{[4\varepsilon^2,2\varepsilon]}\nabla u_0)\big|
\big|\nabla\phi_\varepsilon\big| dxdt \\
& +\varepsilon^2
\int_{\Omega_T}
\big|\varpi(x/\varepsilon,t/\varepsilon^2)
\partial_t S_\varepsilon K_\varepsilon(\Psi_{[4\varepsilon^2,2\varepsilon]}\nabla u_0)\big|
\big|\nabla\phi_\varepsilon\big| dxdt
=: I_1 + I_2 + I_3 + I_4.
\end{aligned}
\end{equation}

Before proceeding further, we want to show the main ideas on accelerating the convergence rates. The key step is to replace $\phi_\varepsilon(x,t)$ by
\begin{equation}\label{f:4.1}
\begin{aligned}
 \breve{w}_\varepsilon(x,T&-t)
+ \phi_0(x,t)
+\varepsilon\chi_{T}^{*}(x/\varepsilon,(T-t)/\varepsilon^2)
S_\varepsilon K_\varepsilon(\Psi_{[100\varepsilon^2,10\varepsilon]}\nabla\phi_0)(x,t) \\
& + \varepsilon^2 E_{T,(d+1)}^{*}(x/\varepsilon,(T-t)/\varepsilon^2)
\nabla S_\varepsilon K_\varepsilon(\Psi_{[100\varepsilon^2,10\varepsilon]}\nabla\phi_0)(x,t).
\end{aligned}
\end{equation}
Here
$\breve{w}$ is given by $\eqref{eq:4.2}$, and it follows from Theorem $\ref{thm:3.1}$ that
\begin{equation}\label{pri:4.5}
\|\breve{w}\|_{L^2(\Omega_T)}\leq C\varepsilon^{1/2}\|\Phi\|_{L^2(\Omega_T)}.
\end{equation}

Observing $\eqref{f:4.1}$ again, all the terms will produce $O(\varepsilon^{1/2})$
except for the second term $\phi_0$ in a co-layer type estimate,
and this naturally arouse the distance function playing a role as a weight function in the following calculation.

To estimate $I_1$, we divide it into two parts:
\begin{equation*}
\begin{aligned}
\underbrace{\int_{\Omega_T}
\big|\Psi_{[4\varepsilon^2,2\varepsilon]}
\nabla u_0 - S_\varepsilon K_\varepsilon(\Psi_{[4\varepsilon^2,2\varepsilon]}\nabla u_0)\big|
\big|\nabla\phi_\varepsilon\big| dxdt}_{I_{11}}
\text{\quad and\quad}\underbrace{\int_{\Omega_T}
(1-\Psi_{[4\varepsilon^2,2\varepsilon]})\big|\nabla u_0\big|
\big|\nabla\phi_\varepsilon\big| dxdt}_{I_{12}}.
\end{aligned}
\end{equation*}
We first handle $I_{12}$ as below
\begin{equation}\label{f:4.3}
\begin{aligned}
I_{12}
\leq C\|\nabla u_0\|_{L^2(\boxbox_{2\varepsilon})}
\|\nabla \phi_\varepsilon\|_{L^2(\boxbox_{2\varepsilon})}
\leq C\|\nabla u_0\|_{L^2(\boxbox_{2\varepsilon})}
\Big\{
\|\nabla \breve{w}_\varepsilon\|_{L^2(\Omega_T)}
+\|\nabla \phi_0\|_{L^2(\boxbox_{2\varepsilon})}
\Big\}
\end{aligned}
\end{equation}
where we replace $\phi_\varepsilon$ by $\eqref{f:4.1}$ in the last step, and use
the fact that the last two terms of $\eqref{f:4.1}$ vanish in $\boxbox_{2\varepsilon}$
since they are supported in
$\Sigma_{100\varepsilon^2,10\varepsilon}^T$. We then turn to study $I_{11}$. It also
decomposes into four parts:
\begin{equation}\label{f:4.2}
\begin{aligned}
\int_{\Omega_T}
\big|\Psi_{[4\varepsilon^2,2\varepsilon]}
\nabla u_0 - K_\varepsilon(\Psi_{[4\varepsilon^2,2\varepsilon]}\nabla u_0)\big|
&\Bigg\{\big|\nabla\breve{w}_\varepsilon(x,T-t)\big|\\
&+ \varepsilon\big|\varpi(x/\varepsilon,(T-t)/\varepsilon^2)
\nabla S_\varepsilon K_\varepsilon(\Psi_{[100\varepsilon^2,10\varepsilon]}\nabla\phi_0)\big|\\
&\underbrace{+\varepsilon^2\big|\varpi(x/\varepsilon,(T-t)/\varepsilon^2)
\nabla^2 S_\varepsilon K_\varepsilon(\Psi_{[100\varepsilon^2,10\varepsilon]}\nabla\phi_0)\big|
\Bigg\}}_{\mathcal{R}_{\varepsilon,1}}dxdt,
\end{aligned}
\end{equation}
\begin{equation}\label{f:4.5}
\begin{aligned}
\int_{\Omega_T}
\big|K_\varepsilon(\Psi_{[4\varepsilon^2,2\varepsilon]}
\nabla u_0) - S_\varepsilon K_\varepsilon(\Psi_{[4\varepsilon^2,2\varepsilon]}
\nabla u_0)\big| \mathcal{R}_{\varepsilon,1}(x,t)dxdt,
\end{aligned}
\end{equation}
and
\begin{equation}\label{f:4.4}
\begin{aligned}
\int_{\Omega_T}
&\bigg\{\big|\Psi_{[4\varepsilon^2,2\varepsilon]}
\nabla u_0 - S_\varepsilon(\Psi_{[4\varepsilon^2,2\varepsilon]}\nabla u_0)\big|
+ \big|K_\varepsilon(\Psi_{[4\varepsilon^2,2\varepsilon]}
\nabla u_0) - S_\varepsilon K_\varepsilon(\Psi_{[4\varepsilon^2,2\varepsilon]}\nabla u_0)\big|
\bigg\}\\
&\qquad\qquad\qquad\times\underbrace{\bigg\{\big|\nabla\phi_0\big|
+\big|\nabla\chi_T^*(x/\varepsilon,(T-t)/\varepsilon^2)
S_\varepsilon K_\varepsilon(\Psi_{[100\varepsilon^2,10\varepsilon]}\nabla\phi_0)\big|
\bigg\}}_{\mathcal{R}_{\varepsilon,2}} dxdt.
\end{aligned}
\end{equation}
By Cauchy's inequality, it is not hard to see that the expression $\eqref{f:4.2}$
is controlled by
\begin{equation*}
\begin{array}{l}
\big\|
\Psi_{[4\varepsilon^2,2\varepsilon]}
\nabla u_0 - K_\varepsilon(\Psi_{[4\varepsilon^2,2\varepsilon]}\nabla u_0)
\big\|_{L^2(\mathbb{R}^{d+1})}\\
\times\bigg\{
\|\nabla\breve{w}_\varepsilon\|_{L^2(\Omega_T)}
+ \varepsilon\|\varpi(y,\tau)
\nabla S_\varepsilon K_\varepsilon(\Psi_{[9\varepsilon^2,5\varepsilon]}\nabla\phi_0)
\|_{L^2(\mathbb{R}^{d+1})}
+ \varepsilon^2
\|\varpi(y,\tau)
\nabla^2 S_\varepsilon
K_\varepsilon(\Psi_{[100\varepsilon^2,10\varepsilon]}\nabla\phi_0)\|_{L^2(\mathbb{R}^{d+1})}
\bigg\}\\
\leq C\varepsilon\|\nabla(\Psi_{[4\varepsilon^2,2\varepsilon]}
\nabla u_0)\|_{L^2(\mathbb{R}^{d+1})}
\bigg\{
\|\nabla\breve{w}_\varepsilon\|_{L^2(\Omega_T)}
+ \varepsilon\|\nabla(\Psi_{[100\varepsilon^2,10\varepsilon]}\nabla\phi_0)\|_{L^2(\mathbb{R}^{d+1})}
\bigg\}
\end{array}
\end{equation*}
where we use the estimates $\eqref{pri:2.13}$ and $\eqref{pri:2.1}$ in the inequality,
and this together with $\eqref{f:4.3}$ partially produces the first term in the right-hand side of $\eqref{pri:4.1}$. Concerning $\eqref{f:4.5}$, using the same argument as in the proof of Lemma
$\ref{lemma:3.1}$ we can easily obtain
\begin{equation}
\begin{aligned}
\big\|
K_\varepsilon(\Psi_{[4\varepsilon^2,2\varepsilon]}
\nabla u_0) & - S_\varepsilon K_\varepsilon(\Psi_{[4\varepsilon^2,2\varepsilon]}\nabla u_0)
\big\|_{L^2(\mathbb{R}^{d+1})}\\
&\leq C\bigg\{\|\nabla u_0\|_{L^2(\boxbox_{2\varepsilon})}
+\varepsilon\|\nabla^2 u_0\|_{L^2(\Sigma_{4\varepsilon^2,2\varepsilon}^T)}
+\varepsilon\|\partial_t u_0\|_{L^2(\Sigma_{4\varepsilon^2,2\varepsilon}^T)}\bigg\}
\end{aligned}
\end{equation}
where we employ the estimates $\eqref{pri:2.14}$, $\eqref{pri:2.17}$. Hence, it is apparent to see that $\eqref{f:4.5}$ is governed by the first term in the right-hand side of $\eqref{pri:4.1}$.

We proceed to address $\eqref{f:4.4}$, which is dominated by
\begin{equation*}
\begin{aligned}
&\bigg\{\big\|
\Psi_{[4\varepsilon^2,2\varepsilon]}
\nabla u_0 - K_\varepsilon(\Psi_{[4\varepsilon^2,2\varepsilon]}\nabla u_0)
\big\|_{L^2(\Sigma_{4\varepsilon^2,2\varepsilon}^T;\delta)}
+ \big\|
K_\varepsilon(\Psi_{[4\varepsilon^2,2\varepsilon]}
\nabla u_0) - S_\varepsilon K_\varepsilon(\Psi_{[4\varepsilon^2,2\varepsilon]}\nabla u_0)
\big\|_{L^2(\Sigma_{4\varepsilon^2,2\varepsilon}^T;\delta)}\bigg\} \\
&\qquad\qquad\times\bigg\{
\big\|
\nabla\phi_0
\big\|_{L^2(\Sigma_{4\varepsilon^2,2\varepsilon}^T;\delta^{-1})}
+
\big\|
\varpi(y,\tau)S_\varepsilon K_\varepsilon(\Psi_{[9\varepsilon^2,5\varepsilon]}\nabla\phi_0)
\big\|_{L^2(\Sigma_{4\varepsilon^2,2\varepsilon}^T;\delta^{-1})}
\bigg\} =: \mathcal{I}.
\end{aligned}
\end{equation*}
Then applying the weighted-type inequalities $\eqref{pri:2.10}$ and $\eqref{pri:2.12}$, we obtain
\begin{equation*}
\begin{aligned}
\mathcal{I}
\leq C\varepsilon\bigg\{
\big\|\nabla(\Psi_{[4\varepsilon^2,2\varepsilon]}
\nabla u_0)\big\|_{L^2(\Sigma_{\varepsilon^2,\varepsilon}^T;\delta)}
&+ \big\|K_\varepsilon(\nabla(\Psi_{[4\varepsilon^2,2\varepsilon]}
\nabla u_0))\big\|_{L^2(\Sigma_{\varepsilon^2,\varepsilon}^T;\delta)} \\
&+ \varepsilon\big\|\partial_t K_\varepsilon(\Psi_{[4\varepsilon^2,2\varepsilon]}
\nabla u_0)\big\|_{L^2(\Sigma_{\varepsilon^2,\varepsilon}^T;\delta)}
\bigg\}\big\|\nabla\phi_0\big\|_{L^2(\Sigma_{4\varepsilon^2,2\varepsilon}^T;\delta^{-1})}.
\end{aligned}
\end{equation*}
Due to the fact $\eqref{f:3.28}$, we arrive at
\begin{equation*}
\mathcal{I}\leq
C\big\|\nabla\phi_0\big\|_{L^2(\Sigma_{4\varepsilon^2,2\varepsilon}^T;\delta^{-1})}
 \bigg\{
\|\nabla u_0\|_{L^2(\boxbox_{2\varepsilon};\delta)}
+\varepsilon\|\nabla^2 u_0\|_{L^2(\Sigma_{4\varepsilon^2,2\varepsilon}^T;\delta)}
+\varepsilon\|\partial_t u_0\|_{L^2(\Sigma_{4\varepsilon^2,2\varepsilon}^T;\delta)}\bigg\}
\end{equation*}
and this exactly gives the second term in the right-hand side of $\eqref{pri:4.1}$. Up to now, we
have completed the estimates for $I_1$. Also, the above proof actually have shown
the following estimates
\begin{equation}\label{f:4.6}
\begin{aligned}
\big\|\mathcal{R}_{\varepsilon,1}\big\|_{L^2(\Omega_T)}
\leq C\bigg\{\|\nabla\breve{w}_\varepsilon\|_{L^2(\Omega_T)}
+ \|\nabla\phi_0\|_{L^2(\boxbox_{2\varepsilon})}
+ \varepsilon\|\nabla^2\phi_0\|_{L^2(\Sigma_{4\varepsilon^2,2\varepsilon}^T)}\bigg\}
\end{aligned}
\end{equation}
and
\begin{equation}\label{f:4.7}
\begin{aligned}
\big\|\mathcal{R}_{\varepsilon,2}\big\|_{L^2(\Sigma_{4\varepsilon^2,2\varepsilon}^T;\delta^{-1})}
\leq C\big\|\nabla\phi_0\big\|_{L^2(\Sigma_{4\varepsilon^2,2\varepsilon}^T;\delta^{-1})}.
\end{aligned}
\end{equation}

Proceeding to study $I_2$ as in the proof for $I_1$, we first have
\begin{equation*}
\begin{aligned}
I_2&\leq \varepsilon\int_{\Omega_T}
\big|\varpi(x/\varepsilon,t/\varepsilon^2)
\nabla S_\varepsilon K_\varepsilon(\Psi_{[4\varepsilon^2,2\varepsilon]}\nabla u_0)\big|
\Big\{\mathcal{R}_{\varepsilon,1}+\mathcal{R}_{\varepsilon,2}\Big\} dxdt\\
&\leq \varepsilon\Bigg\{
\big\|\varpi(y,\tau)\nabla S_\varepsilon K_\varepsilon(\Psi_{[4\varepsilon^2,2\varepsilon]
}\nabla u_0)\big\|_{L^2(\mathbb{R}^{d+1})}\big\|\mathcal{R}_{\varepsilon,1}\big\|_{L^2(\Omega_T)} \\
& \qquad + \big\|\varpi(y,\tau)\nabla S_\varepsilon K_\varepsilon(\Psi_{[4\varepsilon^2,2\varepsilon]
}\nabla u_0)
\big\|_{L^2(\Sigma_{4\varepsilon^2,2\varepsilon}^T;\delta)}
\big\|\mathcal{R}_{\varepsilon,2}\big\|_{L^2(\Sigma_{4\varepsilon^2,2\varepsilon}^T;\delta^{-1})}
\Bigg\},
\end{aligned}
\end{equation*}
and then by the estimates $\eqref{f:4.6}$, $\eqref{f:4.7}$, $\eqref{pri:2.1}$,
$\eqref{pri:2.17}$,
$\eqref{pri:2.5}$ and $\eqref{pri:2.8}$, we acquire
\begin{equation*}
\begin{aligned}
I_2 &\leq C\big\|\nabla\phi_0\big\|_{L^2(\Sigma_{4\varepsilon^2,2\varepsilon}^T;\delta^{-1})} \bigg\{\|\nabla u_0\|_{L^2(\boxbox_{2\varepsilon};\delta)}
+ \varepsilon\|\nabla^2 u_0\|_{L^2(\Sigma_{4\varepsilon^2,2\varepsilon}^T;\delta)}\bigg\} \\
& + C\bigg\{\|\nabla u_0\|_{L^2(\boxbox_{2\varepsilon})}
+ \varepsilon\|\nabla^2 u_0\|_{L^2(\Sigma_{4\varepsilon^2,2\varepsilon}^T)}
\bigg\}\bigg\{\|\nabla\breve{w}_\varepsilon\|_{L^2(\Omega_T)}
+ \|\nabla\phi_0\|_{L^2(\boxbox_{2\varepsilon})}
+ \varepsilon\|\nabla^2\phi_0\|_{L^2(\Sigma_{4\varepsilon^2,2\varepsilon}^T)}\bigg\}.
\end{aligned}
\end{equation*}

To estimate $I_3$, it suffices to estimate
\begin{equation*}
\underbrace{\big\|\varpi(y,\tau)\nabla^2 S_\varepsilon K_\varepsilon(\Psi_{[4\varepsilon^2,2\varepsilon]
}\nabla u_0)\big\|_{L^2(\mathbb{R}^{d+1})}}_{I_{31}}
\quad
\text{and}
\quad
\underbrace{\big\|\varpi(y,\tau)\nabla^2 S_\varepsilon K_\varepsilon(\Psi_{[4\varepsilon^2,2\varepsilon]
}\nabla u_0)
\big\|_{L^2(\Sigma_{4\varepsilon^2,2\varepsilon}^T;\delta)}}_{I_{32}}.
\end{equation*}
Thus, it follows from the estimates $\eqref{pri:2.1}$ and $\eqref{pri:2.17}$ that
\begin{equation}\label{f:4.8}
I_{31}\leq C\varepsilon^{-1}\big\|\nabla K_\varepsilon
(\Psi_{[4\varepsilon^2,2\varepsilon]
}\nabla u_0)\big\|_{L^2(\mathbb{R}^{d+1})}
\leq C\bigg\{\varepsilon^{-2}\big\|\nabla u_0\big\|_{L^2(\boxbox_{2\varepsilon})}
+\varepsilon^{-1}\big\|\nabla^2 u_0\big\|_{L^2(\Sigma_{4\varepsilon^2,2\varepsilon}^T)}\bigg\},
\end{equation}
and from the estimates $\eqref{pri:2.5}$ and $\eqref{pri:2.8}$ that
\begin{equation}\label{f:4.9}
\begin{aligned}
I_{32}&\leq C\varepsilon^{-1}
\big\| K_\varepsilon
(\nabla(\Psi_{[4\varepsilon^2,2\varepsilon]
}\nabla u_0))\big\|_{L^2(\Sigma_{4\varepsilon^2,2\varepsilon}^T;\delta)} \\
&\leq C\bigg\{\varepsilon^{-2}\big\|\nabla u_0\big\|_{L^2(\boxbox_{2\varepsilon};\delta)}
+\varepsilon^{-1}\big\|\nabla^2 u_0\big\|_{L^2(\Sigma_{4\varepsilon^2,2\varepsilon}^T;\delta)}\bigg\}.
\end{aligned}
\end{equation}
Combining the estimates $\eqref{f:4.6}$, $\eqref{f:4.7}$, $\eqref{f:4.8}$ and $\eqref{f:4.9}$
gives the corresponding estimate for $I_3$, which partially forms the right-hand side of $\eqref{pri:4.1}$.

For $I_4$, using the same argument as before, we are ready to establish
estimates for
\begin{equation*}
\underbrace{\big\|\varpi(y,\tau)\partial_t S_\varepsilon K_\varepsilon(\Psi_{[4\varepsilon^2,2\varepsilon]
}\nabla u_0)\big\|_{L^2(\mathbb{R}^{d+1})}}_{I_{41}}
\quad
\text{and}
\quad
\underbrace{\big\|\varpi(y,\tau)\partial_t S_\varepsilon K_\varepsilon(\Psi_{[4\varepsilon^2,2\varepsilon]
}\nabla u_0)
\big\|_{L^2(\Sigma_{4\varepsilon^2,2\varepsilon}^T;\delta)}}_{I_{42}},
\end{equation*}
respectively. Recalling the crucial observation $\eqref{f:3.29}$, the
estimate for $I_{41}$ actually have been shown in the proof of Lemma $\ref{lemma:3.1}$, which
is
\begin{equation}\label{f:4.10}
I_{41} \leq C\bigg\{\varepsilon^{-2}\|\nabla u_0\|_{L^2(\boxbox_{2\varepsilon})}
+ \varepsilon^{-1}\|\partial_t u_0\|_{L^2(\Sigma_{4\varepsilon^2,2\varepsilon}^T)}\bigg\}.
\end{equation}
Regarding $I_{42}$, employing $\eqref{f:3.29}$ again, it follows from the estimates $\eqref{pri:2.5}$ and $\eqref{pri:2.8}$
that
\begin{equation}\label{f:4.11}
I_{42}
\leq C\bigg\{\varepsilon^{-2}\big\|\nabla u_0\big\|_{L^2(\boxbox_{2\varepsilon};\delta)}
+\varepsilon^{-1}\big\|\partial_t u_0\big\|_{L^2(\Sigma_{4\varepsilon^2,2\varepsilon}^T;\delta)}\bigg\}.
\end{equation}
Thus, the estimates $\eqref{f:4.6}$, $\eqref{f:4.7}$, $\eqref{f:4.10}$ and $\eqref{f:4.11}$
lead to
\begin{equation*}
\begin{aligned}
I_4
&\leq C\big\|\nabla\phi_0\big\|_{L^2(\Sigma_{4\varepsilon^2,2\varepsilon}^T;\delta^{-1})}
\bigg\{\big\|\nabla u_0\big\|_{L^2(\boxbox_{2\varepsilon};\delta)}
+\varepsilon\big\|\partial_t u_0\big\|_{L^2(\Sigma_{4\varepsilon^2,2\varepsilon}^T;\delta)}\bigg\} \\
&+ C\bigg\{\|\nabla u_0\|_{L^2(\boxbox_{2\varepsilon})}
+ \varepsilon\|\partial_t u_0\|_{L^2(\Sigma_{4\varepsilon^2,2\varepsilon}^T)}\bigg\}
\bigg\{\|\nabla\breve{w}_\varepsilon\|_{L^2(\Omega_T)}
+ \|\nabla\phi_0\|_{L^2(\boxbox_{2\varepsilon})}
+ \varepsilon\|\nabla^2\phi_0\|_{L^2(\Sigma_{4\varepsilon^2,2\varepsilon}^T)}\bigg\},
\end{aligned}
\end{equation*}
and this ends the proof.
\end{proof}

\begin{lemma}[Weighted Caccioppoli's inequality]\label{lemma:4.1}
Let $u_0\in (W^{1,1}_2(\Omega_T))^d$ be a weak solution of
$\partial_t u_0 + \mathcal{L}_0(u_0) = F$ in $\Omega_T$,
and the distance functions $\delta,\sigma$ are defined in $\eqref{eq:2.4}$ and
$\eqref{eq:2.5}$, respectively.
Then for any $\varepsilon\leq t_{k-1}<t_k<t_{k+1}\leq c_0$, we have the following
weighted-type estimate
\begin{equation}\label{pri:4.4}
\begin{aligned}
\int_{t_k^2}^{t_{k+1}^2}\int_{\Sigma_{t_k}\cap\{\delta=s^{\frac{1}{2}}\}}|\nabla u_0|^2
s^{-\frac{1}{2}}dx ds
&\leq \frac{C}{(t_k-t_{k-1})^2}\int_{t_k^2}^{t_{k+1}^2}
\int_{\Sigma_{t_{k-1}}\setminus\Sigma_{t_k}}|u_0|^2
[\emph{dist}(x,\partial\Omega)]^{-1}dxds\\
&+C\int_{t_k^2}^{t_{k+1}^2}\int_{\Sigma_{t_{k-1}}\cap\{\delta=\sigma\}}|u_0|^2
[\emph{dist}(x,\partial\Omega)]^{-3}dx ds\\
& + C\int_{t_k^2}^{t_{k+1}^2}\int_{\Sigma_{t_{k-1}}\cap\{\delta=s^{\frac{1}{2}}\}}
|u_0|^2 s^{-1}dxds \\
&+ C\Bigg\{
\int_{t_k^2}^{t_{k+1}^2}\int_{\Sigma_{t_{k-1}}}
|\partial_t u_0|^2 dxds +\int_{\Omega_T} |F|^2 dxds
\Bigg\},
\end{aligned}
\end{equation}
where $C$ depends on $\mu_1,\mu_2,d$.
\end{lemma}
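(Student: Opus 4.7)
The natural approach is a weighted energy argument testing the homogenized equation $\partial_t u_0+\mathcal{L}_0(u_0)=F$ against $\psi^2 u_0/\delta$, where $\psi=\psi(x)$ is a smooth spatial cut-off with $\psi\equiv 1$ on $\Sigma_{t_k}$, $\psi\equiv 0$ outside $\Sigma_{t_{k-1}}$, and $|\nabla\psi|\leq C/(t_k-t_{k-1})$. Since $\psi$ vanishes near $\partial\Omega$, this test function is admissible in $H^1_0(\Omega)$, and integration by parts in the spatial variable yields the identity
\begin{equation*}
\int_{\Omega}\frac{\psi^2}{\delta}\widehat{A}\nabla u_0\cdot\nabla u_0\,dx
=\int_{\Omega}\frac{\psi^2 u_0\cdot F}{\delta}\,dx
-2\int_{\Omega}\frac{\psi u_0}{\delta}\widehat{A}\nabla u_0\cdot\nabla\psi\,dx
+\int_{\Omega}\frac{\psi^2 u_0}{\delta^2}\widehat{A}\nabla u_0\cdot\nabla\delta\,dx
-\int_{\Omega}\partial_t u_0\cdot\frac{\psi^2 u_0}{\delta}\,dx,
\end{equation*}
which I will then integrate in $s$ over $(t_k^2,t_{k+1}^2)$. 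I will \emph{not} integrate by parts in time; rather, the $\partial_t u_0$ term will be treated directly via Young's inequality, which avoids boundary contributions at $s=t_k^2,t_{k+1}^2$ that do not fit into the right-hand side of \eqref{pri:4.4}.

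The first key step is a weighted Korn--elasticity lower bound. The elasticity condition \eqref{c:1} gives, as in Lemma~\ref{lemma:3.3}, $\int(\psi^2/\delta)\,\widehat{A}\nabla u_0\cdot\nabla u_0\,dx\geq (\mu_1/4)\int(\psi^2/\delta)\,|\nabla u_0+(\nabla u_0)^T|^2\,dx$. Using the pointwise identity $|\nabla u+(\nabla u)^T|^2=2|\nabla u|^2+2\nabla_i u^j\nabla_j u^i$ together with a double integration by parts on the cross term---which produces a nonnegative $(\nabla\cdot u_0)^2$ contribution and a remainder controlled by $\int|\nabla(\psi^2/\delta)|\,|u_0||\nabla u_0|\,dx$---followed by Young's inequality with the decomposition $\nabla(\psi^2/\delta)=2\psi\nabla\psi/\delta-\psi^2\nabla\delta/\delta^2$, I expect to obtain
\begin{equation*}
\int\frac{\psi^2}{\delta}\widehat{A}\nabla u_0\cdot\nabla u_0\,dx
\geq c\int\frac{\psi^2}{\delta}|\nabla u_0|^2\,dx
-C\int\Big(\frac{|\nabla\psi|^2}{\delta}+\frac{\psi^2|\nabla\delta|^2}{\delta^3}\Big)|u_0|^2\,dx.
\end{equation*}
Because $\nabla\psi$ is supported on $\Sigma_{t_{k-1}}\setminus\Sigma_{t_k}$, which lies in $\{\delta=\sigma\}$ (since $\sigma\leq t_k\leq s^{1/2}$ there), and $\nabla\delta$ vanishes outside $\{\delta=\sigma\}$ with $|\nabla\delta|=1$ on that set, the two remainders match the first two terms on the right-hand side of \eqref{pri:4.4}.

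Finally, I apply Young's inequality to the remaining three integrals in the identity: the source term is bounded by $\tfrac12\int|F|^2+\tfrac12\int\psi^2|u_0|^2/\delta^2$, the $\nabla\psi$ cross term produces an absorbable piece plus $\int|\nabla\psi|^2|u_0|^2/\delta$, the $\nabla\delta$ cross term contributes only on $\{\delta=\sigma\}$, and the time-derivative term is bounded by $\epsilon\int|\partial_t u_0|^2+C_\epsilon\int\psi^2|u_0|^2/\delta^2$. The crucial decomposition $\delta^{-2}=\mathbf{1}_{\{\delta=\sigma\}}\sigma^{-2}+\mathbf{1}_{\{\delta=s^{1/2}\}}s^{-1}$, combined with $\sigma\leq c_0$ (which absorbs $\sigma^{-2}\leq C\sigma^{-3}$ on $\{\delta=\sigma\}$), distributes these contributions into precisely the four terms on the right-hand side of \eqref{pri:4.4}. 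Absorbing the $\epsilon$-multiples of $\int\psi^2|\nabla u_0|^2/\delta$ back to the left and restricting the left-hand side to $\{\delta=s^{1/2}\}\cap\Sigma_{t_k}$ (where $\psi\equiv 1$ and $\delta^{-1}=s^{-1/2}$) delivers \eqref{pri:4.4}. The main obstacle will be executing the weighted Korn step sharply: the singular weight $\delta^{-1}$ forces the correction terms produced by the integration by parts to track the geometric decomposition $\{\delta=\sigma\}\cup\{\delta=s^{1/2}\}$, and only the pointwise identities $\nabla\delta\equiv 0$ on $\{\delta=s^{1/2}\}$ and $\partial_t\delta\equiv 0$ on $\{\delta=\sigma\}$ allow the corrections to land in the correct pieces of the right-hand side of the statement.
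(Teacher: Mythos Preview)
Your approach is correct and essentially the same as the paper's. The paper streamlines the argument by substituting $\psi=\varphi\,\delta^{-1/2}$ (with $s$ frozen) directly into the unweighted Caccioppoli estimate \eqref{f:3.31} already established in Lemma~\ref{lemma:3.3}, so that the weighted Korn step you plan to redo is inherited for free; after computing $|\nabla\psi|\le|\nabla\varphi|\,\delta^{-1/2}+|\varphi|\,|\nabla\delta|\,\delta^{-3/2}$, integrating in $s$, and invoking the decomposition $\Sigma_{t_{k-1}}=(\Sigma_{t_{k-1}}\cap\{\delta=\sigma\})\cup(\Sigma_{t_{k-1}}\cap\{\delta=s^{1/2}\})$ together with $\nabla\delta\equiv 0$ on $\{\delta=s^{1/2}\}$, the proof concludes exactly as you describe.
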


\begin{proof}
We will carry out the proof
by suitable modification to the one for the estimate $\eqref{pri:3.4}$ in Lemma $\ref{lemma:3.3}$.
Set $t_0 = \varepsilon$.
In view of the estimate $\eqref{f:3.31}$, we choose $\psi = \varphi\delta^{-\frac{1}{2}}$,
where the distance function $\delta$ is defined in $\eqref{eq:2.4}$,
and $\varphi\in C_0^1(\Omega)$ is a cut-off function satisfying $\varphi = 1$ in $\Sigma_{t_k}$,
and $\varphi = 0$ outside $\Sigma_{t_{k-1}}$ with $|\nabla\varphi|\leq C/(t_k-t_{k-1})$.
Then, putting
\begin{equation*}
 |\nabla\psi|\leq |\nabla\varphi|\delta^{-\frac{1}{2}}
+ |\varphi||\nabla \delta|\delta^{-\frac{3}{2}}
\end{equation*}
into the estimate $\eqref{f:3.31}$, for any fixed $s\in[t_k^2,t_{k+1}^2]$, we have
\begin{equation*}
\begin{aligned}
\int_{\Sigma_{t_k}}|\nabla u_0|^2 \delta^{-1}dx
&\leq C
\Bigg\{\frac{1}{(t_k-t_{k-1})^2}
\int_{\Sigma_{t_k}\setminus\Sigma_{t_{k-1}}}
|u_0|^2 \delta^{-1}dx
+ \int_{\Sigma_{t_{k-1}}}|u_0|^2|\nabla \delta|^2 \delta^{-3} dx\\
& + \int_{\Sigma_{t_{k-1}}}|u_0|^2\delta^{-2}dx
+\int_{\Sigma_{t_{k-1}}}|\partial_t u_0|^2dx
+ \int_{\Omega}|F|^2dx\Bigg\}.
\end{aligned}
\end{equation*}
Then integrating
the both sides of the above inequality with respect to $s$ from $t_k^2$ to $t_{k+1}^2$, and
we have proved the following estimate:
\begin{equation}\label{f:4.27}
\begin{aligned}
\int_{t_k^2}^{t_{k+1}^2}\int_{\Sigma_{t_k}}|\nabla u_0|^2
[\delta(x,s)]^{-1}dx ds
&\leq \frac{C}{(t_k-t_{k-1})^2}\underbrace{\int_{t_k^2}^{t_{k+1}^2}
\int_{\Sigma_{t_{k-1}}\setminus\Sigma_{t_k}}|u_0|^2
[\delta(x,s)]^{-1}dxds}_{I_1}\\
&+C\underbrace{\int_{t_k^2}^{t_{k+1}^2}\int_{\Sigma_{t_{k-1}}}|u_0|^2
|\nabla\delta(x,s)|^2[\delta(x,s)]^{-3}dx ds}_{I_2}\\
& + C\underbrace{\int_{t_k^2}^{t_{k+1}^2}\int_{\Sigma_{t_{k-1}}}
|u_0|^2 [\delta(x,s)]^{-2}dxds}_{I_3} \\
&+ C\Bigg\{
\int_{t_k^2}^{t_{k+1}^2}\int_{\Sigma_{t_{k-1}}}
|\partial_t u_0|^2 dxds +\int_{\Omega_T} |F|^2 dxds
\Bigg\}.
\end{aligned}
\end{equation}
Our task now is to analyze the concrete behavior of the distance function $\delta$ in
the integrals $I_1, I_2$ and $I_3$. Since $\delta(x,s)=\text{dist}(x,\partial\Omega)$ for
any $(x,s)\in(\Sigma_{t_{k-1}}\setminus\Sigma_{t_k})\times[t_k^2,t_{k+1}^2)$, we have
\begin{equation}\label{f:4.24}
I_1 = \int_{t_k^2}^{t_{k+1}^2}
\int_{\Sigma_{t_{k-1}}\setminus\Sigma_{t_k}}|u_0|^2
[\text{dist}(x,\partial\Omega)]^{-1}dxds.
\end{equation}
By noting that
$$\Sigma_{t_{k-1}} = \big(\Sigma_{t_{k-1}}\cap\{\delta =\sigma\}\big)\bigcup
\big(\Sigma_{t_{k-1}}\cap\{\delta =s^{\frac{1}{2}}\}\big)$$
and $\nabla \delta \equiv 0$ in $\Sigma_{t_{k-1}}\cap\{\delta = s^{\frac{1}{2}}\}$,
the integral $I_2$ is just equal to
\begin{equation}\label{f:4.25}
 \int_{t_k^2}^{t_{k+1}^2}
\int_{\Sigma_{t_{k-1}}\cap\{\delta=\sigma\}}
|u_0|^2[\text{dist}(x,\partial\Omega)]^{-3}dxds,
\end{equation}
while the integral $I_3$ will produce two terms:
\begin{equation}\label{f:4.26}
\int_{t_k^2}^{t_{k+1}^2}\int_{\Sigma_{t_{k-1}}\cap\{\delta =\sigma\}}
|u_0|^2 [\text{dist}(x,\partial\Omega)]^{-2}dxds
+
\int_{t_k^2}^{t_{k+1}^2}\int_{\Sigma_{t_{k-1}}\cap\{\delta =s^{\frac{1}{2}}\}}
|u_0|^2 s^{-1}dxds.
\end{equation}
We ends the proof by
substituting the expressions $\eqref{f:4.24}$, $\eqref{f:4.25}$ and
$\eqref{f:4.26}$ for $I_1$, $I_2$ and $I_3$ in the estimate $\eqref{f:4.27}$, respectively.
Obviously, the first term of $\eqref{f:4.26}$ could be absorbed by $\eqref{f:4.25}$, and
we have completed the whole proof.
\end{proof}

\begin{lemma}[Improved lemma]\label{lemma:4.2}
Assume the same conditions as in Lemma $\ref{lemma:3.2}$. Let $u_0$ be the weak solution
of $(\emph{DP}_0)$ with $F\in (L^2(\Omega_T))^d$, $g\in ({_0H}^{1,1/2}(S_T))^d$ and
$h\in (H^1(\Omega))^d$ satisfying
\begin{equation*}
 \|F\|_{L^2(\Omega_T)}+\|g\|_{H^{1,1/2}(S_T)}
+ \|h\|_{H^1(\Omega)} = 1.
\end{equation*}
Then we have
\begin{equation}\label{pri:4.2}
\big\|\nabla u_0\big\|_{L^2(\boxbox_{2\varepsilon};\delta)}
\leq C\varepsilon,
\end{equation}
and
\begin{equation}\label{pri:4.3}
\begin{aligned}
\max\bigg\{\big\|\nabla^2 u_0\big\|_{L^2(\Sigma_{4\varepsilon^2,2\varepsilon}^T;\delta)},
~\big\|\partial_t u_0\big\|_{L^2(\Sigma_{4\varepsilon^2,2\varepsilon}^T;\delta)},
~\big\|\nabla u_0\big\|_{L^2(\Sigma_{4\varepsilon^2,2\varepsilon}^T;\delta^{-1})}\bigg\}
\leq C\big[\log_2(c_0/\varepsilon)\big]^{1/2},
\end{aligned}
\end{equation}
where $C$ depends on $\mu_1,\mu_1,d,T$ and $\Omega$.
\end{lemma}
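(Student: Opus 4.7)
The plan is to derive each of the four weighted bounds by combining the unweighted layer and co-layer estimates of Lemma~\ref{lemma:3.2} with either the trivial inequality $\delta\leq\sigma$ or the weighted Caccioppoli inequality of Lemma~\ref{lemma:4.1}.

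First, the bound $\eqref{pri:4.2}$ is almost immediate: since $\delta(X)\leq C\varepsilon$ throughout $\boxbox_{2\varepsilon}$, combining the lateral-layer estimate $\eqref{pri:3.1}$ with the time-layer estimate $\eqref{pri:3.2}$ in the manner of $\eqref{f:3.3}$ yields $\|\nabla u_0\|_{L^2(\boxbox_{2\varepsilon})}\leq C\varepsilon^{1/2}$, whence
$$\|\nabla u_0\|_{L^2(\boxbox_{2\varepsilon};\delta)}^2 \leq C\varepsilon\,\|\nabla u_0\|_{L^2(\boxbox_{2\varepsilon})}^2 \leq C\varepsilon^2.$$

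For the $\nabla^2 u_0$ and $\partial_t u_0$ bounds in $\eqref{pri:4.3}$, I split $u_0=v+\bar{w}$ as in the proof of Lemma~\ref{lemma:3.2}. The global $W_2^{2,1}$ bound $\eqref{f:3.7}$ on $v$ together with the boundedness of $\delta$ on $\Omega_T$ gives an $O(1)$ contribution from $v$. For $\bar{w}$, I use the interior pointwise estimate $|\nabla^2 \bar{w}(X)|^2\leq C\sigma(X)^{-2}\dashint_{P(X,\sigma(X)/4)}|\nabla\bar{w}|^2\,dY$ together with $\delta\leq\sigma$, which turns the weight $\delta/\sigma^2$ into the weaker $1/\sigma$. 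Repeating the co-area argument leading to $\eqref{f:3.21}$ now produces the radial integral $\int_{2\varepsilon}^{c_0}dr/r\lesssim\log_2(c_0/\varepsilon)$ in place of the $\int_{2\varepsilon}^{c_0}dr/r^2\lesssim\varepsilon^{-1}$ appearing there, and the nontangential maximal-function bound $\eqref{f:3.13}$ then closes the estimate on $\nabla^2\bar{w}$. The $\partial_t u_0$ bound follows from $\partial_t u_0 = F+\hat a_{ij}^{\alpha\beta}\nabla^2_{ij}u_0$ together with the trivial bound $\|F\|_{L^2(\Omega_T;\delta)}\leq C\|F\|_{L^2(\Omega_T)}$.

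The remaining $\delta^{-1}$-weighted bound on $\nabla u_0$ is the main technical obstacle. The plan is dyadic: set $r_k=2^k\varepsilon$ for $k=0,\ldots,K$ with $K=\lceil\log_2(c_0/\varepsilon)\rceil$, and partition $\Sigma_{4\varepsilon^2,2\varepsilon}^T$ according to which of $\sigma$, $s^{1/2}$, $(T-s)^{1/2}$ realises $\delta$. On the slice where $\delta=s^{1/2}$ and $s\in[r_k^2,r_{k+1}^2)$, I apply Lemma~\ref{lemma:4.1} at the scale $t_k=r_k$; the dominant terms on the right-hand side of $\eqref{pri:4.4}$, once the sharp scale-$r$ layer bound $\int_{t-r^2}^{t}\int_{\Sigma_r\setminus\Sigma_{2r}}|u_0|^2\,dxds\leq Cr^3$ (the rescaling of $\eqref{f:3.17}$) is inserted, each contribute only $O(1)$, and the sum over $k$ produces the $\log_2(c_0/\varepsilon)$ factor. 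The symmetric slice $\delta=(T-s)^{1/2}$ is handled by time reversal, and the slice $\delta=\sigma$ by a parallel dyadic decomposition in the spatial variable together with the scale-$r$ lateral-layer bound $\|\nabla u_0\|_{L^2((\Omega\setminus\Sigma_r)_T)}^2\leq Cr$, which is the rescaling of $\eqref{pri:3.1}$ with constant uniform for $r\in[\varepsilon,c_0]$. The hard part is careful bookkeeping: one must verify that, in each of the three competing regimes of $\delta$, the dyadic sum telescopes to $\log_2(c_0/\varepsilon)$ rather than the naive $\varepsilon^{-1}$ that a crude use of the energy estimate in place of the sharp layer bounds would produce.
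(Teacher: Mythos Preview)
Your proposal is correct and in its broad outline matches the paper's proof: the bound $\eqref{pri:4.2}$ and the first two quantities in $\eqref{pri:4.3}$ are handled exactly as in the paper (trivial weight bound on $\boxbox_{2\varepsilon}$ for the former; the split $u_0=v+\bar w$, the pointwise interior estimate $\eqref{f:3.30}$, and the replacement $\delta/\sigma^2\le 1/\sigma$ for the latter), and for the $\delta^{-1}$-weighted bound you reproduce the paper's dyadic-in-time argument via Lemma~$\ref{lemma:4.1}$ on the slice $\{\delta=s^{1/2}\}$.

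The one place you diverge is the slice $\{\delta=\sigma\}$. The paper introduces the \emph{radial maximal function} $\mathcal M(\nabla u_0)$, uses $\mathcal M(\nabla\bar w)\le(\nabla\bar w)^*$ together with a maximal-function trace inequality for $v$ quoted from \cite{QXS1}, and then integrates $\int_{2\varepsilon}^{c_0}dr/r$ via the co-area formula (see the treatment of $J_3$ and $J_{12}$ in the paper). Your route is more elementary: a dyadic spatial decomposition into annuli $\Sigma_{r_k}\setminus\Sigma_{r_{k+1}}$ combined with the scale-uniform lateral-layer bound $\|\nabla u_0\|_{L^2((\Omega\setminus\Sigma_r)_T)}^2\le Cr$, which is indeed valid uniformly in $r\in[\varepsilon,c_0]$ by the same proof as $\eqref{f:3.10}$--$\eqref{f:3.11}$. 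Both arguments yield the same $\log_2(c_0/\varepsilon)$ factor; your version avoids the external maximal-function lemma at the cost of invoking the full layer estimate at every dyadic scale, while the paper's version packages the $r$-dependence into a single maximal-function object.
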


\begin{proof}
First, we will address the estimate $\eqref{pri:4.2}$. Recalling
$\boxbox_{2\varepsilon} = \Omega_T\setminus \Sigma_{8\varepsilon^2,4\varepsilon}^T$ and the
definition of the distance function $\delta$ (see $\eqref{eq:2.4}$),
it is
not hard to see that
\begin{equation*}
\begin{aligned}
\|\nabla u_0\|_{L^2(\boxbox_{2\varepsilon;}\delta)}
&\leq C\varepsilon^{1/2}\|\nabla u_0\|_{L^2(\boxbox_{2\varepsilon})}\\
&\leq C\varepsilon^{1/2}\Bigg\{\|\nabla u_0\|_{L^2((\Omega\setminus\Sigma_{4\varepsilon})_T)}
+\sup_{\varepsilon^2<t<T}\Big(\int_{t-\varepsilon^2}^t
\int_{\Sigma_{4\varepsilon}}|\nabla u_0|^2 dxds\Big)^{\frac{1}{2}}\Bigg\}
\leq C\varepsilon,
\end{aligned}
\end{equation*}
where we employ the estimates $\eqref{pri:3.1}$ and $\eqref{pri:3.2}$ in the last step.

We now turn to study the estimate $\eqref{pri:4.3}$.
Using the same arguments as in the proof of the estimate $\eqref{pri:3.3}$ to prove the first two quantities in the left-hand side of $\eqref{pri:4.3}$, it suffices to bound
\begin{equation*}
\int_{4\varepsilon^2}^{T-4\varepsilon^2}\int_{\Sigma_{2\varepsilon}}
|\nabla^2 u_0|^2 \delta(x,t)dxdt.
\end{equation*}
by
\begin{equation*}
r_0\underbrace{\Bigg\{\int_{4\varepsilon^2}^{T-4\varepsilon^2}\int_{\Omega}
|\nabla^2 v|^2 dxdt
+\int_{4\varepsilon^2}^{T-4\varepsilon^2}\int_{\Sigma_{c_0}}
|\nabla^2 \bar{w}|^2 dxdt\Bigg\}}_{I_1}
+ \underbrace{\int_{4\varepsilon^2}^{T-4\varepsilon^2}\int_{\Sigma_{2\varepsilon}\setminus\Sigma_{c_0}}
|\nabla^2 \bar{w}|^2 \delta(x,t) dxdt}_{I_2},
\end{equation*}
recalling $u_0 = v + \bar{w}$, and $v$ together with $\bar{w}$ is shown in the proof of
Lemma $\ref{lemma:3.2}$. Obviously, it follows from the estimates $\eqref{f:3.19}$ and $\eqref{f:3.20}$ that
\begin{equation*}
I_1 \leq C.
\end{equation*}
Thus, our task is reduced to estimate $I_2$. On account of the estimate $\eqref{f:3.30}$, we obtain
\begin{equation*}
\begin{aligned}
I_2
&\leq C\int_{4\varepsilon^2}^{T-4\varepsilon}
\int_{\Sigma_{2\varepsilon}\setminus\Sigma{c_0}}
\frac{1}{\sigma(X)}
\dashint_{P(X,\sigma(X)/4)}|\nabla\bar{w}|^2 dY dX \\
&\leq C\int_0^T\int_{\partial\Omega}|(\nabla\bar{w})^*(\cdot,t)|^2 dSdt
\int_{2\varepsilon}^{c_0}\frac{dr}{r}\\
&\leq C\ln(c_0/\varepsilon)
\end{aligned}
\end{equation*}
by noting that $\delta(x)/\sigma(X)\leq 1$.
This together the estimate for $I_1$ implies the estimate
\begin{equation}\label{f:4.12}
\big\|\nabla^2 u_0\big\|_{L^2(\Sigma_{4\varepsilon^2,2\varepsilon}^T;\delta)}
\leq C[\ln(c_0/\varepsilon)]^{1/2}
\Big\{\|F\|_{L^2(\Omega_T)}+\|g\|_{H^{1,1/2}(S_T)}
+ \|h\|_{H^1(\Omega)}\Big\}.
\end{equation}
Since $\partial_t u_0 + \mathcal{L}_0(u_0) = F$  in $\Omega_T$,  to estimate
\begin{equation*}
\int_{4\varepsilon^2}^{T-4\varepsilon^2}\int_{\Sigma_{2\varepsilon}}
|\partial_t u_0|^2 \delta(x,t)dxdt.
\end{equation*}
is reduced to the estimate $\eqref{f:4.12}$.

So, the remainder thing is to investigate the quantity
\begin{equation*}
\int_{4\varepsilon^2}^{T-4\varepsilon^2}\int_{\Sigma_{2\varepsilon}}
|\nabla u_0|^2 [\delta(x,t)]^{-1}dxdt,
\end{equation*}
which will be divided by four parts:
\begin{equation*}
\underbrace{\int_{4\varepsilon^2}^{c_0^2}\int_{\Sigma_{2\varepsilon}}
|\nabla u_0|^2 [\delta(x,t)]^{-1}dxdt}_{J_1},
\qquad\quad
\underbrace{\int_{T-c_0^2}^{T-4\varepsilon^2}\int_{\Sigma_{2\varepsilon}}
|\nabla u_0|^2 [\delta(x,t)]^{-1}dxdt}_{J_2},
\end{equation*}
and
\begin{equation*}
\underbrace{\int_{c_0^2}^{T-c_0^2}\int_{\Sigma_{2\varepsilon}\setminus\Sigma_{c_0}}
|\nabla u_0|^2 [\text{dist}(x,\partial\Omega)]^{-1}dxdt}_{J_3},
\qquad
\underbrace{\int_{c_0^2}^{T-c_0^2}\int_{\Sigma_{c_0}}
|\nabla u_0|^2 [\delta(x,t)]^{-1}dxdt}_{J_4},
\end{equation*}
where we note that $\delta(x,t) = \text{dist}(x,\partial\Omega)$ in $J_3$.

Since $\delta(x,t)\geq c_0$ in $J_4$,  we can easily have
\begin{equation}\label{f:4.17}
J_4 \leq c_0^{-1}\int_{0}^{T}\int_{\Omega}
|\nabla u_0|^2dxdt
\leq C
\end{equation}
from the estimate $\eqref{pri:3.5}$. To estimate $J_3$, we first define the radical maximal function as
\begin{equation}
\mathcal{M}(\nabla u_0)(x_0,t) = \sup\big\{|\nabla u_0(\Lambda_r(x_0),t)|:0\leq r\leq c_0\big\}
\end{equation}
for any $x_0\in\partial\Omega$ and $t\in(0,T)$, where $\Lambda_r:\partial\Omega\to \partial\Sigma_r =S_r$ are a family of bi-Lipschitz maps (see for example \cite{SZW2,QX2}).
Hence, by co-area formula, it is not hard to derive
\begin{equation}\label{f:4.13}
\begin{aligned}
J_3
&\leq C\int_{c^2}^{T-c^2}\int_{\partial\Omega}|\mathcal{M}(\nabla u_0)(\cdot,t)|^2dSdt\int_{2\varepsilon}^{c_0}\frac{dr}{r}\\
&\leq C\ln(c_0/\varepsilon)\bigg\{
\int_{0}^T\int_{\partial\Omega}|\mathcal{M}(\nabla v)|^2dSdt
+ \int_{0}^T\int_{\partial\Omega}|\mathcal{M}(\nabla \bar{w})|^2dSdt\bigg\}\\
&\leq C\ln(c_0/\varepsilon)\Bigg\{
\int_{0}^T\int_{\Omega\setminus\Sigma_{c_0}}\big(|\nabla^2 v|^2
+|\nabla v|^2\big)dxdt
+ \int_{0}^T\int_{\partial\Omega}|(\nabla \bar{w})^*|^2dSdt\Bigg\}\\
&\leq C\ln(c_0/\varepsilon),
\end{aligned}
\end{equation}
where we use the fact that $u_0 = v+\bar{w}$ in the second inequality, and the third one
involves two important things: one is
\begin{equation*}
\begin{aligned}
\int_{\partial\Omega}|\mathcal{M}(\nabla v)(\cdot,t)|^2dS
\leq \int_{\Omega\setminus\Sigma_{c_0}}\big(|\nabla^2 v(\cdot,t)|^2
+|\nabla v(\cdot,t)|^2\big)dx
\qquad\forall t\in[0,T]
\end{aligned}
\end{equation*}
(see \cite[Lemma 2.24]{QXS1}),
the other is the observation that $\mathcal{M}(\nabla \bar{w})\leq (\nabla\bar{w})^*$ on $S_T$.
The last inequality of $\eqref{f:4.13}$ follows from the estimates $\eqref{f:3.7}$ and
$\eqref{f:3.13}$.

We now turn to $J_1$ and $J_2$. In fact,
by changing variable, the study on $J_2$ may be reduced to investigate $J_1$. Thus we focus
our minds on $J_1$. First of all, it is better to slip $J_1$ in two parts:
\begin{equation*}
\underbrace{\int_{4\varepsilon^2}^{c_0^2}
\int_{\Sigma_{2\varepsilon}\cap\{\delta = t^{\frac{1}{2}}\}}|\nabla u_0|^2
t^{-\frac{1}{2}}dxdt}_{J_{11}}
\qquad\text{and}\qquad
\underbrace{\int_{4\varepsilon^2}^{c_0^2}
\int_{\Sigma_{2\varepsilon}\cap\{\delta = \sigma\}}|\nabla u_0|^2 [\text{dist}(x,\partial\Omega)]^{-1}dxdt}_{J_{12}}.
\end{equation*}
It is clear to see that we can adopt the same arguments used in $J_3$ to derive
\begin{equation}\label{f:4.22}
 J_{12} \leq C\ln(c_0/\varepsilon),
\end{equation}
and we do not reproduce the details here. Concerning $J_{11}$, set $t_k = 2^k\varepsilon$ and
$N_0 = \log_2(c_0/\varepsilon)$, which follows from $2^k\varepsilon = c_0$. Hence, we obtain
\begin{equation}\label{f:4.23}
 J_{11} \leq \sum_{k=1}^{N_0}\int_{t_k^2}^{t_{k+1}^2}
\int_{\Sigma_{t_k}\cap\{\delta=s^{\frac{1}{2}}\}}
|\nabla u_0|^2 s^{-\frac{1}{2}}dxds :=\sum_{k=1}^{N_0} \mathcal{K}_k
\leq C\log_2(c_0/\varepsilon)
\end{equation}
provided that
\begin{equation}\label{f:4.21}
 \sup_{k\geq 1}\mathcal{K}_k\leq C.
\end{equation}
Hence, the problem is reduced to estimate $\eqref{f:4.21}$. It follows from
the weighted Caccioppoli's inequality $\eqref{pri:4.4}$ that
\begin{equation}\label{f:4.28}
\begin{aligned}
 \mathcal{K}_{k}
 &= \int_{t_k^2}^{t_{k+1}^2}
 \int_{\Sigma_{t_k}\cap\{\delta=s^{\frac{1}{2}}\}}|\nabla u_0|^2 s^{-\frac{1}{2}} dx ds \\
&\leq Ct_{k-1}^{-2}\int_{t_k^2}^{t_{k+1}^2}
\int_{\Sigma_{t_{k-1}}\setminus\Sigma_{t_{k}}}|u_0|^2 [\text{dist}(x,\partial\Omega)]^{-1}dxds
+ Ct_{k-1}^{-3}\int_{t_k^2}^{t_{k+1}^2}
\int_{\Sigma_{t_{k-1}}\cap\{\delta=\sigma\}}|u_0|^2 dxds\\
& + C\Bigg\{\int_{t_k^2}^{t_{k+1}^2}\int_{\Sigma_{t_{k-1}}\cap\{\delta=s^{\frac{1}{2}}\}}
|u_0|^2 s^{-1}dxds
+ \int_{t_k^2}^{t_{k+1}^2}\int_{\Sigma_{\varepsilon}}
|\partial_t u_0|^2 dxds
+ \int_{\Omega_T}
|F|^2 dxds
\Bigg\},
\end{aligned}
\end{equation}
by noting that $\sigma\geq t_{k-1}$
in $(\Sigma_{t_{k-1}}\cap\{\delta=\sigma\})\times[t_k^2,t_{k+1}^2)$. Since
there holds
\begin{equation*}
 \Sigma_{t_{k-1}}\setminus \Sigma_{t_{k}}\subset \Sigma_{t_{k-1}}\cap \{\delta=\sigma\}
\subset \Sigma_{t_{k-1}}\setminus \Sigma_{t_{k+1}}
\end{equation*}
for any $s\in[t_k,t_{k+1})$, the second line of $\eqref{f:4.28}$ is reduced to estimate
the quantity
\begin{equation*}
 t_{k-1}^{-3}\int_{t_k^2}^{t_{k+1}^2}
\int_{\Sigma_{t_{k-1}}\setminus\Sigma_{t_{k+1}}}|u_0|^2 dxds.
\end{equation*}
Hence, following the methods employed in the estimate $\eqref{f:3.17}$ we arrive at
\begin{equation*}
\begin{aligned}
t_{k-1}^{-3}\int_{t_k^2}^{t_{k+1}^2}
\int_{\Sigma_{t_{k-1}}\setminus\Sigma_{t_{k+1}}}|u_0|^2 dxds
&\leq Ct_{k-1}^{-1}
\sup_{0\leq s\leq T}\int_{\Sigma_{t_{k-1}}\setminus\Sigma_{t_{k+1}}}|u_0(\cdot,s)|^2 dx \\
&\leq C\sup_{0\leq s\leq T}\int_{\mathbb{R}^d}(|v|^2 + |\nabla v|^2) dx +
C\int_{\partial\Omega}|(\bar{w})^*(\cdot,\xi_k)|^2 dS
\end{aligned}
\end{equation*}
where $\xi_k\in[t_k,t_{k+1})$. Integrating the both sides of the above inequality with respect to
$\xi_k$ from $0$ to $T$, we consequently reach
\begin{equation}\label{f:4.18}
 t_{k-1}^{-3}\int_{t_k^2}^{t_{k+1}^2}
\int_{\Sigma_{t_{k-1}}\setminus\Sigma_{t_{k+1}}}|u_0|^2 dxds
\leq C
\end{equation}
through the estimates $\eqref{f:3.7}$ and $\eqref{f:3.13}$.

An argument similar to the one used in the estimate $\eqref{f:3.17}$ shows that
\begin{equation}\label{f:4.16}
\begin{aligned}
\int_{t_k^2}^{t_{k+1}^2}
\int_{\Sigma_{t_{k-1}}\cap\{\delta=s^{\frac{1}{2}}\}}
|u_0|^2 s^{-1}dxds
&\leq \sup_{0\leq s\leq T}
\int_{\Omega}|u_0|^2dx\int_{t_k^2}^{t_{k+1}^2}\frac{ds}{s}
\leq C,
\end{aligned}
\end{equation}
where we note that $t_{k+1}/t_k = 2$. Besides,
adopting the idea similar to the one used in the estimate $\eqref{f:3.18}$, it is not hard to derive \begin{equation}\label{f:4.20}
\Big(\int_{t_k^2}^{t_{k+1}^2}\int_{\Sigma_{2\varepsilon}}
|\partial_t u_0|^2 dxds\Big)^{\frac{1}{2}}
\leq C.
\end{equation}

Thus, collecting the estimates $\eqref{f:4.20}$,$\eqref{f:4.16}$ and $\eqref{f:4.18}$ leads to the desired estimate $\eqref{f:4.21}$. Finally, the estimates
$\eqref{f:4.22}$ and $\eqref{f:4.23}$ give
\begin{equation*}
  J_1 \leq C\log_2(c_0/\varepsilon),
\end{equation*}
and this together with the estimates $\eqref{f:4.17}$ and  $\eqref{f:4.13}$ implies
\begin{equation*}
\big\|\nabla u_0\big\|_{L^2(\Sigma_{4\varepsilon^2,2\varepsilon}^T;\delta^{-1})}
\leq
C\big[\log_2(c_0/\varepsilon)\big]^{\frac{1}{2}}.
\end{equation*}

Up to now, we have proved  the estimate $\eqref{pri:4.3}$ and completed the whole proof.
\end{proof}

\begin{flushleft}
\textbf{Proof of Theorem $\ref{thm:1.1}$.}
First of all, we may assume
$\|F\|_{L^2(\Omega_T)}+\|h\|_{H^1(\Omega)}
+ \|g\|_{H^{1,1/2}(S_T)} = 1$.
On account of Lemmas $\ref{lemma:4.3}$ and $\ref{lemma:4.2}$ and
$\ref{lemma:3.2}$, for any $\Phi\in L^2(\Omega_T)$, it
is not hard to derive from
the estimates $\eqref{pri:4.1}$, $\eqref{pri:4.2}$ and $\eqref{pri:4.3}$ together
with $\eqref{pri:3.1}$, $\eqref{pri:3.2}$ that
\end{flushleft}
\begin{equation}
\begin{aligned}
\bigg|\int_{\Omega_T} w_\varepsilon
\Phi dxdt\bigg|
&\leq C\Big\{\varepsilon^{\frac{1}{2}}
+ \varepsilon^{\frac{1}{2}} + \varepsilon^{\frac{1}{2}}\Big\}\cdot\Big\{\varepsilon^{\frac{1}{2}}
+\varepsilon^{\frac{1}{2}}
+\varepsilon^{\frac{1}{2}}\Big\}\|\Phi\|_{L^2(\Omega_T)} \\
&+ C\big[\log_2(c_0/\varepsilon)\big]^{\frac{1}{2}}
\cdot\Big\{\varepsilon
+ \varepsilon\big[\log_2(c_0/\varepsilon)\big]^{\frac{1}{2}}
+ \varepsilon\big[\log_2(c_0/\varepsilon)\big]^{\frac{1}{2}}\Big\}\|\Phi\|_{L^2(\Omega_T)}
\end{aligned}
\end{equation}
where we also employed the estimate $\eqref{pri:4.5}$ in the computation, which shows
that
\begin{equation}\label{f:4.29}
\|w_\varepsilon\|_{L^2(\Omega_T)}\leq C\varepsilon\log_2(c_0/\varepsilon).
\end{equation}
Since
\begin{equation*}
\begin{aligned}
\big\|u_\varepsilon - u_0\big\|_{L^2(\Omega_T)}
&\leq \big\|w_\varepsilon\big\|_{L^2(\Omega_T)}
+ \varepsilon\big\|\chi_j(\cdot/\varepsilon,\cdot/\varepsilon^2)
S_\varepsilon K_\varepsilon(\Psi_{[4\varepsilon^2,2\varepsilon]}\nabla_j u_0)\big\|_{L^2(\Omega_T)}\\
&+ \varepsilon^2
\big\|E_{l(d+1)j}(\cdot/\varepsilon,\cdot/\varepsilon^2)\nabla_l
S_\varepsilon K_\varepsilon(\Psi_{[4\varepsilon^2,2\varepsilon]}\nabla_j u_0)\big\|_{L^2(\Omega_T)},
\end{aligned}
\end{equation*}
it suffices to estimate
\begin{equation*}
\big\|\chi_j(\cdot/\varepsilon,\cdot/\varepsilon^2)
S_\varepsilon K_\varepsilon(\Psi_{[4\varepsilon^2,2\varepsilon]}\nabla_j u_0)
\big\|_{L^2(\mathbb{R}^{d+1})}
\quad\text{and}\quad
\varepsilon\big\|E_{l(d+1)j}(\cdot/\varepsilon,\cdot/\varepsilon^2)\nabla_l
S_\varepsilon K_\varepsilon(\Psi_{[4\varepsilon^2,2\varepsilon]}\nabla_j u_0)
\big\|_{L^2(\mathbb{R}^{d+1})}.
\end{equation*}
Form the estimates $\eqref{pri:2.1}$ and $\eqref{pri:2.17}$, we can assert that
the above two quantities will be determined by
$\|\nabla u_0\|_{L^2(\Omega_T)}$.
This together with the estimate $\eqref{pri:3.5}$ and $\eqref{f:4.29}$ implies
\begin{equation*}
 \|u_\varepsilon - u_0\|_{L^2(\Omega_T)} \leq C\varepsilon\log_2(c_0/\varepsilon).
\end{equation*}
We have completed all the proof.
\qed

\begin{center}
\textbf{Acknowledgements}
\end{center}

The first author wants to express his sincere appreciation to Professor Zhongwei Shen for his constant guidance and encouragement. The first author was supported by the National Natural Science Foundation of China (Grant NO.11471147). The second author was supported by the National Natural Science Foundation of China (Grant NO.11571020).

\end{document}